\documentclass[12pt,twoside,reqno]{amsart}
\usepackage[utf8]{inputenc}
\usepackage[T1]{fontenc}
\usepackage[english]{babel}
\usepackage[colorlinks=true,citecolor=blue]{hyperref}
\hypersetup{
  hidelinks
}
\usepackage{mathptmx,amsmath,amssymb,amsfonts,amsthm,enumerate,color}
\usepackage{mathtools,booktabs,array}

\usepackage[most]{tcolorbox}
\usepackage{todonotes}

\usepackage{xcolor}

\newcommand{\R}{\mathbb R}
\newcommand{\Rp}{\mathbb R_+}
\newcommand{\cF}{\mathcal F}
\newcommand{\cG}{\mathcal G}
\newcommand{\cH}{\mathcal H}
\newcommand{\cK}{\mathcal K}
\newcommand{\cP}{\mathcal P}
\newcommand{\cM}{\mathcal M}
\newcommand{\AX}{\mathrm{ax}}
\newcommand{\nc}{\mathrm{nc}}
\DeclareMathOperator{\intt}{int}
\DeclareMathOperator{\cl}{cl}
\DeclareMathOperator{\cone}{cone}
\DeclareMathOperator{\co}{co}
\DeclareMathOperator{\dom}{dom}
\DeclareMathOperator{\dist}{dist}
\DeclareMathOperator{\sdw}{sdw}
\DeclareMathOperator{\rad}{rad}
\DeclareMathOperator{\Lip}{Lip}
\DeclareMathOperator{\argmin}{argmin}

\newcommand{\pos}[1]{[#1]_+}

\newtheorem{theorem}{Theorem}[section]
\newtheorem{corollary}{Corollary}[section]
\newtheorem{lemma}{Lemma}[section]
\newtheorem{proposition}{Proposition}[section]
\theoremstyle{definition}
\newtheorem{definition}{Definition}[section]
\newtheorem{example}{Example}[section]
\newtheorem{remark}{Remark}[section]

\numberwithin{equation}{section}

\begin{document}
\setcounter{page}{1}

\vspace*{1.0cm}

\title[Directional Fineness for Radiant and Co-Radiant Sets]
{Directional Fineness for Radiant and Co-Radiant Sets:
Separation and Quantitative Approximation}

\author[F. Garc\'ia-Casta\~no, M.\,\'A. Melguizo-Padial]
{Fernando Garc\'ia-Casta\~no$^{1,*}$, Miguel \'Angel Melguizo-Padial$^{1}$}
\maketitle
\vspace*{-0.6cm}

\begin{center}
{\footnotesize\it
$^1$Department of Mathematics, University of Alicante, Campus de Sant Vicent del Raspeig s/n,
03690 Sant Vicent del Raspeig (Alicante), Spain
}
\end{center}

\vskip 4mm
{\small\noindent {\bf Abstract.}
A prescribed directionally fine family of cones yields separation and primal
bipolar identities for closed radiant and co-radiant sets in real normed spaces.
Positive levels of adapted
positively homogeneous
functions separate
points from closed radiant sets, while a correction by the norm
provides bounded radiant neighbourhoods for co-radiant separation.
The resulting classes also calibrate radial thresholds and represent
gauges and cogauges through strict separators.  
For convex directionally fine families,
the separating generators can be chosen
Lipschitz and superlinear, yielding convex
separating superlevels.

Closed co-radiant sets with compact
direction sets admit finite relative outer
approximations by positive superlevels of
selected generators, with separation preservation
and directional and bounded-section error
estimates, without requiring continuity or
boundedness of the radial profile.
Superlinear generators
yield a global residual error bound, exact penalization for Lipschitz
objectives,
recovery estimates, and convergence of bounded approximate minimizers.

An explicit convex directionally fine axial
family is available in every real normed space.
Its norm-corrected functions realize separation
and bipolarity without rotundity assumptions,
while an axial recovery map improves the relative
distance factor from $\eta/(1-\eta)$ to $\eta$.
A three-ray production model
illustrates the quantitative recovery estimates.

\par\medskip
\noindent {\bf Keywords.}
Directionally fine families; radiant sets; co-radiant sets; nonlinear polarity;
relative approximation; exact penalization.

\par\medskip
\noindent {\bf 2020 Mathematics Subject Classification.}
52A30, 
46B20,
52A07,
90C26,
90C29
}

\renewcommand{\thefootnote}{}
\footnotetext{$^*$Corresponding author.
\par
E-mail addresses: fernando.gc@ua.es (F. Garc\'ia-Casta\~no),
ma.mp@ua.es (M.\,\'A. Melguizo-Padial).

}

\section{Introduction}\label{sec:introduction}

A radiant set is invariant under contractions towards the origin; a
co-radiant set is invariant under dilations away from it. Separation and
approximation of these sets require control of both directions and radial
thresholds.  
Directional fineness, introduced for
families of cones in \cite{GarciaMelguizo2026},
ensures that, around every unit direction and
within every prescribed aperture, the family
contains a cone having that direction in its
interior. This localization provides the
geometric basis for the separating constructions
developed below.

Radiant sets are precisely sets star-shaped with respect to the origin and therefore belong to the broader framework of starshaped analysis. Their radial geometry is naturally encoded by Minkowski-type gauges, while the corresponding co-radiant geometry leads to cogauges and reverse polar constructions. In this setting, nonlinear separation plays a role analogous to that of hyperplane separation in classical convex analysis: it provides scalar descriptions of sets, polarity and bipolar representations, and support constructions for positively homogeneous functions. These connections have been developed in the framework of abstract convexity and starshaped analysis, in particular through superlinear separation, gauge and cogauge representations, and polarity relations \cite{Zaffaroni2004,Zaffaroni2006,Zaffaroni2007,Zaffaroni2008,Mohebi2013,Zaffaroni2013,HansenEtAl2020}. Thus separation is useful here not only as a geometric device for distinguishing a point from a set, but also as a mechanism for converting radial set geometry into scalar and dual representations.

Co-radiant sets were introduced into approximate vector optimization to
provide unified notions of $\varepsilon$-efficiency, first in
finite-dimensional multiobjective programming and subsequently in a
general nonconvex framework with nonlinear scalarizations
\cite{GutierrezJimenezNovo2006MMOR,GutierrezJimenezNovo2006SIAM}.
They have since been used in Ekeland-type variational principles in
vector optimization, scalarization methods, and variable ordering
structures
\cite{Qiu2013,SayadiBanderEtAl2017,
SayadiBanderKasimbeyliPourkarimi2017,PengWeiKasimbeyli2025}.
More recently, co-radiant sets have also appeared in set optimization,
through generalized oriented-distance scalarizations and optimality
conditions for quasi-minimal solutions
\cite{AnsariSharma2022}.
Related co-radiant function classes also arise in mathematical economics,
where increasing quasiconcave co-radiant functions are used in production
and utility theory, with co-radiance providing a mathematical formulation
of diminishing returns to scale
\cite{MartinezLegazRubinovSchaible2005}.
These developments motivate the following questions about co-radiant
sets: when such a set represents a domination or feasibility structure,
finite approximations, separation preservation, and explicit recovery
estimates become relevant in addition to qualitative representation.

Continuous superlinear point separation of closed radiant and co-radiant
sets in normed spaces, and their characterization as bipolar fixed points,
are known \cite{Zaffaroni2007}. Zaffaroni's normlinear realization uses a
locally uniformly rotund (LUR) norm \cite{Zaffaroni2022}. Our contribution
concerns the sufficiency of prescribed generating families and quantitative
approximation and recovery, rather than the existence of separation or
bipolarity alone.

We fix adapted positively homogeneous generators of a directionally fine
family independently of subsequent sets and points. Their positive
superlevels separate points from closed radiant sets. A correction by the
norm supplies bounded radiant neighbourhoods for the opposite separation.
Convex directional families admit Lipschitz superlinear generators.
The resulting evaluation polarities recover the closed radial hull and
the closed dilation shadow. Radial calibration and gauge representations
are consequences within these prescribed classes.

The main quantitative development concerns closed co-radiant sets \(D\)
with compact direction set. Finitely many positive superlevels yield an
outer approximation \(D^\eta\) with directional and bounded-section error
estimates, preserving separation from a disjoint closed radiant set.
Neither continuity nor boundedness of the radial profile is required.
For superlinear generators, an explicit directional repair proves a
global residual error bound for the finite union. Exact penalization of
a Lipschitz objective then follows by the usual Lipschitz comparison.
Recovery in \(D\) separately controls geometric error, feasible objective
values, and relaxed optimal values. Uniformly bounded approximate
minimizers converge subsequentially to minimizers of the original problem.

The axial family, introduced at the end of Section~\ref{sec:generators},
gives closed-form generators and separators in every real normed space.
This removes the LUR requirement by changing the separating class, not
by extending the normlinear class itself. Its explicit recovery map
improves the relative distance constant from \(\eta/(1-\eta)\) to \(\eta\).
Once norming functionals are chosen, the local axial constructions are
explicit and the parameter estimates guide their use. Compactness supplies
the finite global selection, but no certified selection procedure or bound
on the number of pieces. A three-ray production model illustrates the
recovery and value bounds.

Sections~\ref{sec:preliminaries} and~\ref{sec:generators} give the radial
and functional setting. Sections~\ref{sec:positive}--\ref{sec:polarity}
establish separation, radial representations, and bipolar identities.
Sections~\ref{sec:approximation} and~\ref{sec:optimization} treat finite
relative approximation, penalization, recovery, and convergence.
The axial specializations conclude the corresponding result sections,
and Section~\ref{sec:optimization} includes a production model.
Section~\ref{sec:comparison} distinguishes the earlier separation and
representation results from the present realizations and quantitative
estimates. Section~\ref{sec:conclusion} summarizes the conclusions.

\section{Radial geometry}\label{sec:preliminaries}
In this section, we fix the notation and basic terminology used throughout the paper. We also introduce the radial profiles of radiant and co-radiant sets and establish some fundamental properties of closed co-radiant sets, including those related to directional compactness, which will play an important role in the subsequent approximation and convergence results.

Let \(X\) be a nonzero real normed space, \(X^*\) its continuous dual,
\(B_X\) its closed unit ball, and \(S_X\) its unit sphere. Write
\(B(x,r)=x+rB_X\), \(0_X\) for the origin, \(\Rp=[0,+\infty)\),
\(\mathbb N=\{1,2,\ldots\}\), and \(\pos{t}=\max\{t,0\}\).
Unless stated otherwise, prescribed sets denoted by \(R\) and \(D\) are
nonempty and proper, with \(0_X\in R\) and \(0_X\notin D\).
The invariance conditions below also apply to hulls and level sets
that may be empty or equal to \(X\).
A cone is invariant
under nonnegative scalar multiplication; a proper cone is nontrivial and
distinct from \(X\). 
Convexity and pointedness are not implicit.
Throughout, every cone belonging to a prescribed family \(\cF\) is assumed to be proper and closed. This convention applies in particular to directionally fine families.
Operator norms are induced by the norm of \(X\).
For a subset \(A\) of a real vector space,
\(\cone(A)=\{ta:t\geq0,\ a\in A\}\), and \(\co(A)\) denotes the set of
finite convex combinations of elements of \(A\). Closures and interiors
are taken in the norm topology. For nonempty \(A,B\subset X\), set
\[
 \dist(x,A)=\inf_{a\in A}\|x-a\|,\qquad
 \dist(A,B)=\inf_{a\in A,\ b\in B}\|a-b\|,\qquad
 e(A,B)=\sup_{a\in A}\dist(a,B).
\]
The Hausdorff distance is \(d_H(A,B)=\max\{e(A,B),e(B,A)\}\);
in excess estimates we put \(e(\varnothing,B)=0\).
For a function \(h:X\to\R\cup\{+\infty\}\) and a nonempty set
\(A\subset X\), write \(\dom h=\{x\in X:h(x)<+\infty\}\) and
\(\argmin_A h=\{x\in A:h(x)=\inf_A h\}\). If \(h\) is real-valued,
we also write
\(\Lip(h)=\sup_{x\ne y}|h(x)-h(y)|/\|x-y\|\), possibly infinite.
The function \(h\) is \emph{Lipschitz} if \(\Lip(h)<+\infty\).
A family \((h_i)_{i\in I}\) of real-valued functions on \(X\) is
\emph{uniformly Lipschitz} if \(\sup_{i\in I}\Lip(h_i)<+\infty\).

\begin{definition}\label{def:radial}
A set \(R\) is \emph{radiant} if \(tR\subset R\) for \(0\leq t\leq1\);
a set \(D\) is \emph{co-radiant} if \(tD\subset D\) for \(t\geq1\).
The radial hull and dilation shadow of \(A\subset X\) are
\[
 \rad A=\{ta:a\in A,\ 0\leq t\leq1\},\qquad
 \sdw A=\{ta:a\in A,\ t\geq1\}.
\]
\end{definition}

Complements interchange the two classes. If \(D\) is closed and co-radiant
and \(y\notin D\), compactness of the segment gives
\begin{equation}\label{eq:segment-gap}
 [0_X,y]\cap D=\varnothing,\qquad \dist([0_X,y],D)>0.
\end{equation}

\begin{definition}\label{def:profile}
For a radiant set \(R\) and a co-radiant set \(D\), define
\[
 \sigma_R(u):=\sup\{t\geq0:tu\in R\},\qquad
 \rho_D(u):=\inf\{t>0:tu\in D\},\qquad u\in S_X,
\]
allowing \(+\infty\) and taking \(\inf\varnothing=+\infty\).  We call $\sigma_R$ and $\rho_D$ the radiant and
co-radiant radial profiles of $R$ and $D$, respectively.
For closed \(R\), the finite radii on the ray \(\Rp u\) belonging to \(R\)
are exactly \(0\leq t\leq\sigma_R(u)\).
For \(D\), put
\(\Sigma_D=\{u\in S_X:\rho_D(u)<+\infty\}\),
\(d_D=\inf_{\Sigma_D}\rho_D\), and
\(\mathcal I_D=\sup_{\Sigma_D}\rho_D\).
For closed \(D\), its \emph{radial base}
is \(\Gamma_D=\{\rho_D(u)u:u\in\Sigma_D\}\). The set \(D\) is
\emph{directionally compact} if \(\Sigma_D\) is compact.
\end{definition}

The co-radiant radial profile is the reciprocal on \(S_X\) of the Minkowski
cogauge of \cite[Definition~2.5]{Zaffaroni2007}; see also
\cite[Section~3]{Zaffaroni2022} and Definition~\ref{def:radial-gauges} below.

\begin{proposition}\label{prop:profile}
For closed co-radiant \(D\), \(d_D=\dist(0_X,D)>0\), the function \(\rho_D\)
is lower semicontinuous on \(S_X\), every finite threshold is attained, and
\begin{equation}\label{eq:radial-representation}
 D=\sdw\Gamma_D=\{tu:u\in\Sigma_D,\ t\geq\rho_D(u)\}.
\end{equation}
If \(\Sigma_D\) is compact, every \(D\cap TB_X\), \(T>0\), is compact.
Moreover, \(\Gamma_D\) is compact if and only if \(\Sigma_D\) is compact
and \(\rho_D|_{\Sigma_D}\) is continuous.
\end{proposition}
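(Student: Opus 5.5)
I would prove the statements in the order listed, using only closedness of \(D\), the co-radiant property, and \eqref{eq:segment-gap}.

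\emph{Step 1: attainment and the representation.} Fix \(u\in\Sigma_D\) and write \(\rho=\rho_D(u)\). Choose \(t_n\downarrow\rho\) with \(t_nu\in D\). By closedness, \(\rho u\in D\), and \(\rho>0\) because \(0_X\notin D\). So every finite threshold is attained, and \(\rho_D(u)u\in\Gamma_D\subset D\). The co-radiant property then gives \(\sdw\Gamma_D\subset D\). Conversely, take \(x\in D\) and set \(u=x/\|x\|\); here \(x\ne0_X\). Then \(\|x\|u\in D\), so \(u\in\Sigma_D\) and \(\|x\|\ge\rho_D(u)\). Hence \(x=(\|x\|/\rho_D(u))\,\rho_D(u)u\in\sdw\Gamma_D\), which proves \eqref{eq:radial-representation}.

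\emph{Step 2: the distance identity.} The representation gives \(\dist(0_X,D)=\inf_{x\in D}\|x\|=\inf_{u\in\Sigma_D}\rho_D(u)=d_D\). Positivity is \eqref{eq:segment-gap} with \(y=0_X\notin D\).

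\emph{Step 3: lower semicontinuity.} Take \(u_n\to u\) in \(S_X\) with \(\liminf\rho_D(u_n)=L<+\infty\). Pass to a subsequence with \(\rho_D(u_n)\to L\). Since \(\rho_D(u_n)u_n\in D\) by Step 1 and this sequence converges to \(Lu\), closedness gives \(Lu\in D\). Also \(L\ge d_D>0\), so \(\rho_D(u)\le L\).

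\emph{Step 4: compact bounded sections.} Let \(\Sigma_D\) be compact and take \(x_n\in D\cap TB_X\). Write \(x_n=\|x_n\|u_n\) with \(u_n\in\Sigma_D\) and \(\|x_n\|\in[d_D,T]\). Extract a subsequence with \(u_n\to u\in\Sigma_D\) and \(\|x_n\|\to s\). Then \(x_n\to su\), which lies in \(D\cap TB_X\) by closedness. So the section is sequentially compact, hence compact.

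\emph{Step 5: compactness of \(\Gamma_D\).} This is the step needing the most care.
\begin{enumerate}[(a)]
\item If \(\Sigma_D\) is compact and \(\rho_D|_{\Sigma_D}\) is continuous, then \(\Gamma_D\) is the continuous image of \(\Sigma_D\) under \(u\mapsto\rho_D(u)u\), hence compact.
\item Conversely, let \(\Gamma_D\) be compact. Since \(\|\gamma\|\ge d_D>0\) on \(\Gamma_D\), the map \(\gamma\mapsto\gamma/\|\gamma\|\) is continuous there, and it maps \(\Gamma_D\) onto \(\Sigma_D\). So \(\Sigma_D\) is compact.
\item For continuity of \(\rho_D\) on \(\Sigma_D\), lower semicontinuity is Step 3. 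For upper semicontinuity, take \(u_n\to u\) in \(\Sigma_D\). By compactness of \(\Gamma_D\), every subsequence of \(\rho_D(u_n)u_n\) has a further subsequence converging to some \(\gamma\in\Gamma_D\). Normalizing shows \(\gamma=\rho_D(u)u\), because \(\gamma=\rho_D(v)v\) with \(v=\gamma/\|\gamma\|=u\).
\item Therefore \(\rho_D(u_n)=\|\rho_D(u_n)u_n\|\to\rho_D(u)\) along the full sequence.
\end{enumerate}
The point to check in (c) is that the limit is pinned down uniquely: each direction carries exactly one point of \(\Gamma_D\), and the limit lies in \(\Gamma_D\) by compactness.
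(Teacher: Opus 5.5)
Your proof is correct and follows essentially the paper's route: attainment plus the ray-by-ray description \([\rho_D(u),+\infty)\) give the representation and the formula for \(d_D\), lower semicontinuity comes from closedness along converging thresholds, and the \(\Gamma_D\) equivalence rests on normalization being a continuous bijection \(\Gamma_D\to\Sigma_D\). Your subsequence arguments in Steps~4 and~5(c) unpack what the paper states directly: \(D\cap TB_X\) is a closed subset of the compact image of \([0,T]\times\Sigma_D\), and a continuous bijection from a compact space onto a Hausdorff space has continuous inverse; note that 5(c) already yields full continuity, so the appeal to Step~3 there is redundant.
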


\begin{proof}
Since \(D\) is closed and excludes \(0_X\), \(\dist(0_X,D)>0\).
On each feasible ray, dilation invariance and closedness give exactly
\([\rho_D(u),+\infty)\). Taking infima over the rays proves the formula
for \(d_D\) and \eqref{eq:radial-representation}.
If \(u_n\to u\) and \(\liminf_n\rho_D(u_n)=a<+\infty\), pass to finite
thresholds converging to \(a\). Then \(\rho_D(u_n)u_n\to au\in D\),
so \(\rho_D(u)\leq a\). This proves lower semicontinuity.

For compact \(\Sigma_D\), \(D\cap TB_X\) is a closed subset of the
compact image of \([0,T]\times\Sigma_D\) under \((t,u)\mapsto tu\).
If \(\Gamma_D\) is compact, normalization is a continuous bijection
onto \(\Sigma_D\), hence a homeomorphism. Its inverse
\(u\mapsto\rho_D(u)u\) is continuous, as is its norm \(\rho_D(u)\).
Conversely, continuity of this inverse on compact \(\Sigma_D\)
makes \(\Gamma_D\) compact.
\end{proof}

\begin{remark}\label{rem:radial-compactness}
Compactness of $\Sigma_D$ implies neither continuity
nor boundedness above of the radial profile $\rho_D$. 
\end{remark}

\section{Directional fineness and adapted generators}\label{sec:generators}
In this section, we introduce directionally fine families of cones and their adapted generators, whose directional localization and positive homogeneity are naturally suited to the radial geometry of radiant and co-radiant sets. We establish the existence of Lipschitz generators, superlinear in the convex case, and construct an explicit axial family in arbitrary normed spaces. These constructions provide the framework for our subsequent separation and approximation results.

We recall the notion of directional fineness,
introduced in \cite[Definition~2.6]{GarciaMelguizo2026}
in the context of cone separation.
\begin{definition}\label{def:fine}
A family \(\cF\) of closed cones is \emph{directionally fine} if,
for every \(u\in S_X\) and \(0<\eta<1\), some \(C\in\cF\) satisfies
\(u\in\intt C\) and \(C\subset\cone B(u,\eta)\). It is \emph{convex
directionally fine} if \(C\) can always be chosen convex.
\end{definition}

\begin{remark}
\label{rem:closed-cones}
If a family of cones is directionally fine, the proper members obtained
by taking closures or closed convex hulls still form
directionally fine families. Members that become improper are discarded.

Indeed, for \(u\in S_X\) and \(0<\eta<1\), put
\(K=\cone B(u,\eta)\). This cone is convex because \(B(u,\eta)\)
is convex, and closed by \cite[Lemma~2.1.43(iii)]{GRTZ2023},
since \(B(u,\eta)\) is closed and bounded, and
\(\dist(0_X,B(u,\eta))=1-\eta>0\).
By Hahn--Banach, choose \(f\in X^*\) with \(\|f\|=f(u)=1\).
Then \(f\geq1-\eta\) on the ball, so \(-u\notin K\) and \(K\) is proper.
Consequently,
\[
 C\subset K
 \quad\Longrightarrow\quad
 \cl C\subset K,\qquad
 \cl(\co C)\subset K.
\]
Both sets contain \(u\) in their interior and
remain proper, since \(K\) is proper. Such witnesses therefore survive
the removal of improper members.

\end{remark}

\begin{definition}\label{def:adapted}
Let \(C\) be a proper closed cone and \(u\in S_X\cap\intt C\).
A continuous positively homogeneous function \(\varphi:X\to\R\) is
\emph{adapted} to \((C,u)\) if
\begin{equation}\label{eq:adapted}
 C=\{\varphi\geq0\},\qquad \varphi(u)=1,\qquad
 \varphi(x)\leq\|x\|\quad\forall x \in X.
\end{equation}
Here positive homogeneity means \(\varphi(tx)=t\varphi(x)\) for \(t\geq0\).
Such a function is \emph{superlinear} if it is superadditive.
\end{definition}

Superlinear functions are concave and have convex superlevels. Continuous
positively homogeneous functions need not be Lipschitz, but adapted
functions can be chosen Lipschitz.

\begin{proposition}\label{prop:canonical}
Every pair \((C,u)\) as in Definition~\ref{def:adapted} admits the Lipschitz adapted function
\begin{equation}\label{eq:oriented}
 \varphi_{C,u}(x):=\min\{b_C(x)/b_C(u),\|x\|\},\qquad
 b_C(x)=\dist(x,X\setminus C)-\dist(x,C).
\end{equation}
If \(C\) is convex, put \(C^+=\{g\in X^*:g(c)\geq0,\,  \forall c\in C\}\) and define
\begin{equation}\label{eq:canonical-sl}
 \varphi^{\rm sl}_{C,u}(x):=\inf\{g(x):g\in C^+,\ g(u)=1\}.
\end{equation}
This function is finite, Lipschitz, superlinear, and  satisfies $C=\{\varphi^{\rm sl}_{C,u}\geq 0\}$.  If \(C\subset\cone B(u,\eta)\) for some \(0<\eta<1\),  then $\varphi^{\rm sl}_{C,u}$   is adapted to $(C,u)$.
Furthermore,
\begin{equation}\label{eq:axial-translation}
 \varphi^{\rm sl}_{C,u}(x+tu)=\varphi^{\rm sl}_{C,u}(x)+t,
 \qquad \forall x\in X,\ \forall t\in\R.
\end{equation}
\end{proposition}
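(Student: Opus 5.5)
We treat the two formulas separately. The first part is elementary metric geometry; the second is convex duality in the form of Hahn--Banach separation.

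\emph{The function \(\varphi_{C,u}\).} The signed distance \(b_C\) is a difference of two \(1\)-Lipschitz functions, so it is Lipschitz. It is positively homogeneous, because \(C\) and \(X\setminus C\) are invariant under multiplication by \(t>0\), and it vanishes at \(0_X\). Its sign is as follows.
\begin{itemize}
\item If \(x\in C\), then \(\dist(x,C)=0\), so \(b_C(x)\ge0\).
\item If \(x\notin C\), then \(\dist(x,X\setminus C)=0\) and, since \(C\) is closed, \(\dist(x,C)>0\), so \(b_C(x)<0\).
\end{itemize}
Hence \(C=\{b_C\ge0\}\). Because \(u\in\intt C\), we have \(b_C(u)=\dist(u,X\setminus C)>0\), where \(X\setminus C\ne\varnothing\) since \(C\) is proper. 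Thus \(b_C/b_C(u)\) is Lipschitz, positively homogeneous and satisfies \(C=\{b_C/b_C(u)\ge0\}\).

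Taking the minimum with \(\|x\|\) keeps Lipschitz continuity and positive homogeneity. It gives \(\varphi_{C,u}\le\|\cdot\|\). It does not change the zero superlevel, since \(\|x\|\ge0\) always. At \(u\) we get \(\min\{1,1\}=1\). This proves \eqref{eq:adapted} for \(\varphi_{C,u}\).

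\emph{The function \(\varphi^{\rm sl}_{C,u}\): nonemptiness and finiteness.} Let \(G=\{g\in C^+:g(u)=1\}\). Choose \(r>0\) with \(B(u,r)\subset C\). For \(g\in C^+\) and \(\|y\|\le1\) we have \(g(u+ry)\ge0\), hence \(r\|g\|\le g(u)\). So every \(g\in G\) satisfies \(\|g\|\le 1/r\), and \(G\) is bounded.

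To see \(G\ne\varnothing\), use properness. Pick \(z\notin C\). Since \(C\) is a closed convex cone, Hahn--Banach strictly separates \(z\) from \(C\), producing \(g\in C^+\) with \(g(z)<0\). Such \(g\) is nonzero, and \(g(u)>0\) by the interiority estimate above, so \(g/g(u)\in G\).

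\emph{Regularity and the superlevel.} \(\varphi^{\rm sl}_{C,u}\) is an infimum of the nonempty family \(G\) of linear maps with norms at most \(1/r\). Therefore it is finite, \((1/r)\)-Lipschitz, positively homogeneous and superadditive, i.e.\ superlinear.

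For \(C=\{\varphi^{\rm sl}_{C,u}\ge0\}\):
\begin{itemize}
\item Every \(g\in G\) is nonnegative on \(C\), so \(C\subset\{\varphi^{\rm sl}_{C,u}\ge0\}\).
\item For \(z\notin C\), the normalized separator \(g\) above lies in \(G\) with \(g(z)<0\), so \(\varphi^{\rm sl}_{C,u}(z)<0\).
\end{itemize}

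\emph{Translation identity and adaptedness.} Since \(g(u)=1\) for all \(g\in G\), we have \(g(x+tu)=g(x)+t\) for every \(t\in\R\). Taking the infimum gives \eqref{eq:axial-translation}, and with \(x=0_X\) it gives \(\varphi^{\rm sl}_{C,u}(u)=1\).

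The remaining claim, and the main obstacle, is \(\varphi^{\rm sl}_{C,u}(x)\le\|x\|\) when \(C\subset\cone B(u,\eta)=:K\). It suffices to find a single \(g\in G\) with \(\|g\|\le1\), since then \(\varphi^{\rm sl}_{C,u}(x)\le g(x)\le\|x\|\).

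\begin{itemize}
\item First attempt: the norming functional \(f\) from Remark~\ref{rem:closed-cones}, with \(\|f\|=f(u)=1\). It satisfies \(f\ge1-\eta\) on \(B(u,\eta)\), so \(f\in K^+\subset C^+\) and \(f\in G\).
\item Hence \(\varphi^{\rm sl}_{C,u}\le f\le\|\cdot\|\).
\end{itemize}

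The step needing care is thus only that \(C^+\) contains a norm-one functional normalized at \(u\). The inclusion \(C\subset K\) supplies this, because \(f\) is nonnegative on \(K\supset C\).
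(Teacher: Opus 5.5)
Your proof is correct and follows essentially the same route as the paper. For $\varphi_{C,u}$ you use the signed distance with the norm cap. For $\varphi^{\rm sl}_{C,u}$ you use the interior-ball bound $r\|g\|\le g(u)$, Hahn--Banach separation for nonemptiness and for the zero superlevel, termwise translation, and a norming functional $f$ in the normalized dual set to get $\varphi^{\rm sl}_{C,u}\le f\le\|\cdot\|$.
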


\begin{proof}
Distances to nonempty sets are \(1\)-Lipschitz. Positive dilations
preserve \(C\) and its complement, so \(b_C\) is positively homogeneous.
Properness excludes a neighbourhood of zero in \(C\); thus \(b_C(0_X)=0\).
Moreover \(b_C(u)>0\), and closedness gives \(C=\{b_C\geq0\}\).
The minimum in \eqref{eq:oriented} is Lipschitz, has value \(1\) at \(u\),
is bounded above by the norm, and preserves this zero superlevel.

For convex \(C\), choose \(a>0\) with \(u+aB_X\subset C\).
For \(g\in C^+\), testing \(u\pm az\), \(z\in B_X\), gives
\(a\|g\|\leq g(u)\). Separation supplies a nonzero member of \(C^+\);
its value at \(u\) is positive. The normalized set in
\eqref{eq:canonical-sl} is therefore nonempty and bounded by \(1/a\).
Its infimum is finite, \(1/a\)-Lipschitz, homogeneous, and superadditive.
For \(x\notin C\), separation supplies \(g\in C^+\) with \(g(x)<0\);
normalizing \(g\) proves that the zero superlevel is \(C\).
Normalization and \eqref{eq:axial-translation} follow termwise.

If \(C\subset\cone B(u,\eta)\), choose \(f\in X^*\) with
\(\|f\|=f(u)=1\), by Hahn--Banach.
Then \(f(v)\geq1-\eta>0\) on \(B(u,\eta)\).
Hence it belongs to the normalized dual set, giving
\(\varphi^{\rm sl}_{C,u}\leq f\leq\|\cdot\|\).
\end{proof}

\begin{definition}
\label{def:localized pair}
\label{def:selected-generators}
Let \(\cF\) be a directionally fine family. A pair \((C,u)\) is
\emph{localized} if \(C\in\cF\), \(u\in S_X\cap\intt C\), and
\[
C\subset\cone B(u,\eta)
\quad\text{for some }0<\eta<1.
\]
Denote the set of localized pairs by \(\mathcal L_{\cF}\). For each
\((C,u)\in\mathcal L_{\cF}\), fix once and for all a Lipschitz positively
homogeneous function \(\varphi_{C,u}:X\to\R\) satisfying
\[
C=\{\varphi_{C,u}\geq0\},\qquad
\varphi_{C,u}(u)=1,\qquad
\varphi_{C,u}(x)\leq\|x\|\quad\forall x \in X.
\]
Each function so selected is called a \emph{selected Lipschitz adapted
generator associated with \(\cF\)}. Their family is
\[
\cG_{\cF}
:=\{\varphi_{C,u}:(C,u)\in\mathcal L_{\cF}\}.
\]
For every convex localized pair, select the canonical superlinear generator
\(\varphi^{\rm sl}_{C,u}\) from \eqref{eq:canonical-sl}. 
\end{definition}

The family of cones generated by balls gives an immediate example of directional fineness. We first describe its Euclidean form in \(\R^2\).

\begin{example}
\label{ex:euclidean-ball-generated-cones}
In \(X=\R^2\) with the Euclidean norm, write \(B_2(u,\delta)\)
for its closed balls. For \(u\in S_X\) and
\(0<\delta<1\), let
\[
 C_{u,\delta}:=\cone B_2(u,\delta),\qquad
 \cF_{\mathrm{Euc}}:=\{C_{u,\delta}:u\in S_X,\ 0<\delta<1\}.
\]
Writing \(Q_u x=x-\langle x,u\rangle u\), the associated selected
adapted generator is
\[
 \varphi_{u,\delta}(x)
 :=\langle x,u\rangle
 -\frac{\sqrt{1-\delta^2}}{\delta}\|Q_u x\|_2,
 \qquad C_{u,\delta}=\{\varphi_{u,\delta}\geq0\}.
\]
It is Lipschitz and superlinear, with \(\varphi_{u,\delta}(u)=1\) and
\(\varphi_{u,\delta}\leq\|\cdot\|_2\). The family is directionally fine:
for any \(u\in S_X\) and \(0<\eta<1\), choose \(0<\delta<\eta\), so
\(u\in\intt C_{u,\delta}\subset\cone B_2(u,\eta)\).
\end{example}

The ball-generated cones remain convex and directionally fine in arbitrary normed spaces. However, the explicit Euclidean formula for their generators relies on the inner-product structure and does not directly extend to general norms. Indeed, let \(u\in S_X\), \(0<\delta<1\), and
\(C=\cone B(u,\delta)\), a closed convex cone. Its dual cone satisfies
\(g\in C^+\) iff \(g(u)-\delta\|g\|\geq0\). Every nonzero
\(g\in C^+\) has \(g(u)>0\), so it can be normalized by \(g(u)=1\).
Separation then gives the canonical generator and its zero-superlevel
representation:
\[
 \varphi^{\rm sl}_{C,u}(z)
 :=\inf\{g(z):g\in X^*,\ g(u)=1,\ \delta\|g\|\leq1\},
 \qquad
 C=\{z\in X:\varphi^{\rm sl}_{C,u}(z)\geq0\}.
\]
The preceding ball-generated family is directionally fine in arbitrary normed spaces, but its canonical superlinear generators are described in general by a dual-infimum formula. The axial cones below are the
uniform axial deviation cones of
\cite[Corollary~5.6]{GarciaMelguizo2026}. We use their generators at
nonzero radial levels and subsequently apply norm correction to obtain
bilateral separation for radiant and co-radiant sets.

Let $u\in S_X$ and $f\in S_{X^*}$ satisfy $f(u)=1$.
We call $(u,f)$ a norming pair. Set
\[
Q_{u,f}:=I_X-u\otimes f,
\]
where $I_X$ is the identity on $X$ and
$(u\otimes f)x=f(x)u$. Thus,
\[
Q_{u,f}x=x-f(x)u,\qquad
x=f(x)u+Q_{u,f}x,\qquad \forall x\in X.
\]
The operator $Q_{u,f}$ is the projection onto
$\ker f$ along $\R u$.
\begin{definition}\label{def:uad}
Let $(u,f)$ be a norming pair and $\delta>0$.
The axial cone and its associated axial function
are defined by
\[
C_{u,f,\delta}
:=\{x\in X:\|Q_{u,f}x\|\leq\delta f(x)\},
\]
and
\[
\varphi_{u,f,\delta}(x)
:=f(x)-\frac{1}{\delta}\|Q_{u,f}x\|,
\qquad \forall x\in X.
\]
The families of axial cones and axial functions are,
respectively,
\[
\mathcal F_{\rm ax}
:=\{C_{u,f,\delta}:
(u,f)\text{ is a norming pair},\ \delta>0\},
\]
\[
\mathcal G_{\rm ax}
:=\{\varphi_{u,f,\delta}:
(u,f)\text{ is a norming pair},\ \delta>0\}.
\]

\end{definition}

\begin{proposition}\label{prop:uad}
 The family  \(\cF_{\AX}\) is  a convex directionally fine family in every real normed
space.  The functions  in
$\cG_{\AX}$ coincide with the canonical superlinear generators
\(\varphi^{\rm sl}_{C_{u,f,\delta},u}\)  defined by \eqref{eq:canonical-sl},
are adapted, and satisfy
\begin{equation}\label{eq:uad-shift}
 \begin{gathered}
 \varphi_{u,f,\delta}\leq f\leq\|\cdot\|,\qquad
 \Lip(\varphi_{u,f,\delta})\leq1+\|Q_{u,f}\|/\delta,\quad \|Q_{u,f}\|\leq2,\\
 \varphi_{u,f,\delta}(x+tu)=\varphi_{u,f,\delta}(x)+t,\qquad \forall t\in\R.
 \end{gathered}
\end{equation}
For \(0<\delta<\eta<1\),
\begin{equation}\label{eq:uad-narrow}
 C_{u,f,\delta}\subset\cone B(u,\eta).
\end{equation}
\end{proposition}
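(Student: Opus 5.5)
I would begin with the elementary properties of \(Q:=Q_{u,f}\) and \(\varphi:=\varphi_{u,f,\delta}\), then identify \(\varphi\) with the canonical generator, and finish with the narrowness inclusion, which also gives directional fineness.

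\emph{Elementary estimates.} Since \(\|u\|=\|f\|=1\), we have \(\|Qx\|\le\|x\|+|f(x)|\le2\|x\|\), so \(\|Q\|\le2\). Because \(f\) is \(1\)-Lipschitz and \(x\mapsto\|Qx\|\) is \(\|Q\|\)-Lipschitz, \(\Lip(\varphi)\le1+\|Q\|/\delta\). The inequality \(\varphi\le f\le\|\cdot\|\) is immediate. Since \(Qu=0\) and \(f(u)=1\), we get \(Q(x+tu)=Qx\) and \(f(x+tu)=f(x)+t\), which yields the translation identity in \eqref{eq:uad-shift}. Positive homogeneity and \(C_{u,f,\delta}=\{\varphi\ge0\}\) are clear. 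The map \(x\mapsto\|Qx\|\) is convex, so \(\varphi\) is concave and positively homogeneous, hence superlinear, and the cone \(C_{u,f,\delta}\) is closed and convex. Moreover \(\varphi(u)=1>0\) and \(\varphi\) is continuous, so \(u\in\intt C_{u,f,\delta}\). The cone is proper because \(-u\notin C_{u,f,\delta}\).

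\emph{Narrowness.} Take \(x\in C_{u,f,\delta}\) with \(x\neq0\). Then \(f(x)\ge\|Qx\|/\delta\ge0\). If \(f(x)=0\), then \(Qx=0\) and so \(x=f(x)u+Qx=0\), a contradiction. Hence \(f(x)>0\), and
\[
x=f(x)\bigl(u+Qx/f(x)\bigr),\qquad \|Qx\|/f(x)\le\delta<\eta,
\]
so \(x\in\cone B(u,\eta)\). This proves \eqref{eq:uad-narrow}. Combined with the previous step, for any \(u\in S_X\) and \(0<\eta<1\) we choose \(f\) by Hahn--Banach and any \(\delta\in(0,\eta)\), which gives convex directional fineness. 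By Proposition~\ref{prop:canonical}, \(\varphi^{\rm sl}_{C,u}\) is then adapted.

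\emph{Main obstacle: \(\varphi=\varphi^{\rm sl}_{C,u}\), where \(C=C_{u,f,\delta}\).}

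For "\(\le\)": take \(g\in C^+\) with \(g(u)=1\) and write \(g=f+h\) with \(h:=g\circ Q\). Then \(h\) vanishes on \(\R u\), and \(g(x)=f(x)+h(Qx)\) because \(f(Qx)=0\) and \(h(u)=0\). Testing \(g\) on \(u+\delta w\) with \(w\in\ker f\), \(\|w\|\le1\) (these lie in \(C\), since \(Q(u+\delta w)=\delta w\) and \(f(u+\delta w)=1\)) gives \(h(w)\ge-1/\delta\). By symmetry in \(\pm w\), \(|h(w)|\le\|w\|/\delta\) on \(\ker f\). Hence \(g(x)\ge f(x)-\|Qx\|/\delta=\varphi(x)\), so \(\varphi\le\varphi^{\rm sl}\).

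For "\(\ge\)": fix \(x\). If \(Qx=0\), then \(g=f\) lies in the admissible set, since \(f\ge0\) on \(C\), and \(f(x)=\varphi(x)\). Otherwise, Hahn--Banach on \(\ker f\) gives \(k\) on \(\ker f\) with \(\|k\|\le1\) and \(k(Qx)=\|Qx\|\). Set \(g:=f-\tfrac1\delta k\circ Q\). Then \(g(u)=1\), \(g(x)=\varphi(x)\), and for \(c\in C\),
\[
g(c)\ge f(c)-\|Qc\|/\delta\ge0,
\]
so \(g\in C^+\). Hence \(\varphi^{\rm sl}(x)\le g(x)=\varphi(x)\), which completes the identification. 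The delicate point is that \(k\circ Q\) must be bounded with the right norm on \(\ker f\); this holds because the only norm used is the one restricted to \(\ker f\), on which \(Q\) acts as the identity.
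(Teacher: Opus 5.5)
Your proof is correct and follows the paper's argument essentially step by step: the same elementary estimates, the same Hahn--Banach functional $f-\delta^{-1}k\circ Q_{u,f}$ to get $\varphi^{\rm sl}\leq\varphi$, and the same observation $x/f(x)\in B(u,\delta)$ for narrowness and directional fineness. The one deviation is the bound $g\geq\varphi$ for normalized $g\in C^+$: the paper gets it in one line from the translation identity ($x-\varphi(x)u\in C_{u,f,\delta}$, hence $g(x)-\varphi(x)\geq0$), whereas you decompose $g=f+g\circ Q_{u,f}$ and test on $u+\delta w$ with $w\in\ker f\cap B_X$, which is longer but equally valid.
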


\begin{proof}
The inequality \(\|Q_{u,f}x\|\leq\|x\|+|f(x)|\leq2\|x\|\)
gives the operator bound. Subadditivity of \(\|Q_{u,f}(\cdot)\|\)
gives superlinearity; the triangle inequality gives the Lipschitz bound.
Since \(Q_{u,f}u=0_X\), the translation identity and adaptation follow.
The zero superlevel is closed and convex, contains \(u\) in its interior,
and excludes \(-u\).

Write \(\varphi=\varphi_{u,f,\delta}\). The translation identity gives
\(x-\varphi(x)u\in C_{u,f,\delta}\), so every normalized dual functional
satisfies \(g(x)\geq\varphi(x)\).
By Hahn--Banach, choose \(h\in X^*\), \(\|h\|\leq1\), with
\(h(Q_{u,f}x)=\|Q_{u,f}x\|\). Then
\(g=f-\delta^{-1}h\circ Q_{u,f}\) satisfies \(g(u)=1\) and
\(g(c)\geq f(c)-\delta^{-1}\|Q_{u,f}c\|\geq0\) on the cone.
Also \(g(x)=\varphi(x)\), proving the canonical formula.

For nonzero \(x\) in the cone, \(f(x)>0\) and
\(x/f(x)\in B(u,\delta)\), proving \eqref{eq:uad-narrow}.
For each \(u\) and \(\eta\), Hahn--Banach supplies a norming \(f\);
choosing \(0<\delta<\eta\) proves directional fineness.
\end{proof}

For a fixed norming pair $(u,f)$ , the nonzero vectors in
\(C_{u,f,\delta}\) are precisely those with positive axial component
\(f(x)\) and relative transverse deviation
\(\|Q_{u,f}x\|/f(x)\leq\delta\).
The parameter \(\delta\) therefore bounds the transverse deviation
uniformly along each ray. The next lemma identifies the generating section.

\begin{lemma}
\label{lem:axial-cone-representation}
Let \((u,f)\) be a norming pair and \(\delta>0\). Then
\[
 C_{u,f,\delta}
 =\cone\bigl((u+\ker f)\cap B(u,\delta)\bigr).
\]
\end{lemma}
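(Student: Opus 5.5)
The plan is to prove the two inclusions directly from the decomposition \(x=f(x)u+Q_{u,f}x\) and the definition of \(C_{u,f,\delta}\). Write \(S:=(u+\ker f)\cap B(u,\delta)\). A point \(s\in u+\ker f\) has \(f(s)=f(u)=1\), so \(Q_{u,f}s=s-u\), and \(s\in B(u,\delta)\) means \(\|s-u\|\leq\delta\). Hence
\[
 S=\{s\in X:f(s)=1,\ \|Q_{u,f}s\|\leq\delta\}=C_{u,f,\delta}\cap\{f=1\}.
\]
This identity is the only point that needs care, and the rest reduces to it.

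\emph{Inclusion \(\supset\).} Take \(s\in S\) and \(t\geq0\). Since \(Q_{u,f}\) and \(f\) are linear,
\[
 \|Q_{u,f}(ts)\|=t\|Q_{u,f}s\|\leq t\delta=\delta f(ts).
\]
So \(ts\in C_{u,f,\delta}\), and \(\cone S\subset C_{u,f,\delta}\). The case \(t=0\) gives \(0_X\), which lies in the cone because both sides of the defining inequality vanish.

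\emph{Inclusion \(\subset\).} Let \(x\in C_{u,f,\delta}\). Then \(0\leq\|Q_{u,f}x\|\leq\delta f(x)\), so \(f(x)\geq0\).

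If \(f(x)=0\), then \(Q_{u,f}x=0_X\), and therefore \(x=f(x)u+Q_{u,f}x=0_X\). This lies in \(\cone S\), which is nonempty because \(u\in S\) and contains \(0_X\) by the convention \(t\geq0\).

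If \(f(x)>0\), put \(s=x/f(x)\). Then \(f(s)=1\), and by homogeneity
\[
 \|Q_{u,f}s\|=\|Q_{u,f}x\|/f(x)\leq\delta.
\]
Thus \(s\in S\), and \(x=f(x)s\in\cone S\). This is the same normalization used in the proof of \eqref{eq:uad-narrow} in Proposition~\ref{prop:uad}.

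The only potential obstacle is the degenerate case \(f(x)=0\). It is handled by noting that the defining inequality forces the transverse part \(Q_{u,f}x\) to vanish there. Otherwise the argument is a routine verification, and no closure is needed: every nonzero element of the cone normalizes exactly onto the section \(S\).
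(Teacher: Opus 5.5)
Your proof is correct and follows the paper's argument essentially step for step. The paper also sets \(t=f(x)\), gets \(t\geq0\) from the defining inequality, treats \(t=0\) by \(x=0_X\), and otherwise writes \(x=t(u+z)\) with \(z=(x-tu)/t\in\ker f\) and \(\|z\|\leq\delta\) (which is exactly your \(s=x/f(x)\)); the reverse inclusion is the same homogeneity check, and your explicit remark that \(u\in S\) guarantees \(0_X\in\cone S\) is a detail the paper leaves implicit.
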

\begin{proof}
For the forward inclusion, let \(x\in C_{u,f,\delta}\) and put \(t=f(x)\).
The inequality \(\|x-tu\|\leq\delta t\) implies \(t\geq0\).
If \(t=0\), then \(x=0_X\). Otherwise \(z=(x-tu)/t\) satisfies
\(f(z)=0\) and \(\|z\|\leq\delta\).
Thus \(u+z\in(u+\ker f)\cap B(u,\delta)\) and \(x=t(u+z)\).
Conversely, any point of the generated cone has this form with
\(t\geq0\), \(z\in\ker f\), and \(\|z\|\leq\delta\).
Since \(f(u)=1\), one has \(f(x)=t\) and
\(\|x-f(x)u\|=t\|z\|\leq\delta f(x)\), proving membership in the axial cone.
\end{proof}

In a Hilbert space, the ball-generated and axial families coincide up to
a reparametrization. Indeed, for \(u\in S_X\), \(f(x)=\langle x,u\rangle\),
and \(0<\beta<1\), one has
\(\cone B(u,\beta)=C_{u,f,\delta}\) with
\(\delta=\beta/\sqrt{1-\beta^2}\); the corresponding canonical generators
also agree.

Unless explicitly stated otherwise, every concrete cone family used in
the examples below is the axial family \(\cF_{\AX}\).

\section{Positive-level separation}\label{sec:positive}

In this section, we show that positive superlevels of adapted generators can be localized within the dilation shadows of small balls. We use this construction to separate points from closed radiant sets and, by compactness, to obtain finite separation of compact sets. Finally, we give an explicit version of the point-separation result using the axial family.

\begin{lemma}\label{lem:directions}
If \(u\in S_X\), \(0<\eta<1\), and \(v\in S_X\cap\cone B(u,\eta)\), then
\(\|v-u\|\leq2\eta\). For \(a\ne0_X\) and \(\|h\|<\|a\|\),
\begin{equation}\label{eq:normalization}
 \left\|\frac{a+h}{\|a+h\|}-\frac a{\|a\|}\right\|
 \leq\frac{2\|h\|}{\|a\|}.
\end{equation}
\end{lemma}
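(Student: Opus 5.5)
Both parts rest on one elementary estimate: moving the endpoint of a nonzero vector changes its normalized direction by at most twice the relative size of the move. I will prove \eqref{eq:normalization} first and then obtain the first claim from it.

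\emph{Proof of \eqref{eq:normalization}.} Let \(a\ne0_X\) and \(\|h\|<\|a\|\), so that \(a+h\ne0_X\). Split the difference as
\[
 \frac{a+h}{\|a+h\|}-\frac a{\|a\|}
 =\frac{h}{\|a\|}+(a+h)\Bigl(\frac1{\|a+h\|}-\frac1{\|a\|}\Bigr).
\]
The first term has norm \(\|h\|/\|a\|\). The second has norm
\[
 \|a+h\|\cdot\frac{\bigl|\|a\|-\|a+h\|\bigr|}{\|a+h\|\,\|a\|}
 =\frac{\bigl|\|a\|-\|a+h\|\bigr|}{\|a\|}\leq\frac{\|h\|}{\|a\|},
\]
by the reverse triangle inequality. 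Adding the two bounds gives the factor \(2\). The hypothesis \(\|h\|<\|a\|\) is used only to guarantee that \(a+h\ne0_X\).

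\emph{First claim.} Take \(v\in S_X\cap\cone B(u,\eta)\). Since \(v\ne0_X\), we can write \(v=tw\) with \(t>0\) and \(w\in B(u,\eta)\). Note \(w\ne0_X\), because \(\|w\|\geq1-\eta>0\). Apply \eqref{eq:normalization} with \(a=u\) and \(h=w-u\). Here \(\|h\|\leq\eta<1=\|u\|\), and \(w/\|w\|=v\) because \(\|v\|=1\). This yields
\[
 \|v-u\|=\Bigl\|\frac{w}{\|w\|}-u\Bigr\|\leq2\|w-u\|\leq2\eta .
\]

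There is no real obstacle. The only points to check are that the vectors being normalized are nonzero, which follows from \(\eta<1\) and \(\|h\|<\|a\|\). One should also confirm that the decomposition of the difference uses only the triangle inequality, so the argument holds in an arbitrary normed space with no rotundity assumption.
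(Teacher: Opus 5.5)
Your proposal is correct and follows essentially the same route as the paper: your splitting of the difference is exactly the insertion of the intermediate point \((a+h)/\|a\|\), with the reverse triangle inequality giving the factor \(2\). Your first claim likewise writes \(v\) as the normalization of some \(u+h\) with \(\|h\|\leq\eta<1\) and applies \eqref{eq:normalization} with \(a=u\).
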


\begin{proof}
Insert \((a+h)/\|a\|\) between the two normalized vectors.
The two resulting distances are
\(\bigl|\|a+h\|-\|a\|\bigr|/\|a\|\) and \(\|h\|/\|a\|\).
The reverse triangle inequality bounds the first by the second,
proving \eqref{eq:normalization}. For the first assertion,
write \(v=(u+h)/\|u+h\|\), with \(\|h\|\leq\eta<1\), and apply
that estimate with \(a=u\).
\end{proof}

The preceding lemma controls the directions of vectors in cones localized around a given unit vector. We now use this estimate to show that positive superlevels of sufficiently localized adapted generators lie within a prescribed dilation shadow around a point, while containing a smaller dilation shadow.
\begin{theorem}\label{thm:local}
Let $\cF$ be a directionally fine family
with selected adapted generators $\cG_{\cF}$.
Let $y=su$, $s>0$, $u\in S_X$, and
$0<\varepsilon<s$. Choose
\begin{equation}\label{eq:local-parameters}
s-\varepsilon<\lambda<s,\qquad
\gamma=\lambda-(s-\varepsilon),\qquad
0<\eta<\min\left\{1,\frac{\gamma}{2\lambda}\right\}.
\end{equation}
For every $C\in\cF$ such that
$u\in\intt C$ and $C\subset\cone B(u,\eta)$,
let $\varphi=\varphi_{C,u}\in\cG_{\cF}$
be its selected adapted generator. Then
\begin{equation}\label{eq:local-sandwich}
y\in\{\varphi>\lambda\}
\subset\{\varphi\geq\lambda\}
\subset\sdw B(y,\varepsilon).
\end{equation}
The strict and weak superlevels are co-radiant,
respectively open and closed.
If $L=\Lip(\varphi)$, then
\[
\sdw B(y,r)\subset\{\varphi>\lambda\}
\qquad
\text{for }0<r<\frac{s-\lambda}{L}.
\]

If $C$ is convex, then $\varphi$
is superlinear and both superlevels are
convex. In particular, if $\cF$ is convex
directionally fine, such a convex cone $C$
can be chosen at every prescribed direction
and aperture.

\end{theorem}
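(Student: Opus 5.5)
The plan is a direct verification using only positive homogeneity, the adaptation conditions of Definition~\ref{def:selected-generators}, and the direction estimate of Lemma~\ref{lem:directions}. First, note that such a cone \(C\) exists by Definition~\ref{def:fine}, since \(0<\eta<1\). The first membership is immediate: \(\varphi(y)=s\varphi(u)=s>\lambda\).

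Next, the structural assertions.
\begin{itemize}
\item Since \(\lambda>s-\varepsilon>0\), any \(x\) with \(\varphi(x)\geq\lambda\) and any \(t\geq1\) satisfy \(\varphi(tx)=t\varphi(x)\geq\varphi(x)\geq\lambda\). The same computation works with strict inequalities. Hence both superlevels are co-radiant.
\item Openness and closedness follow from continuity of \(\varphi\).
\item If \(C\) is convex, the selected generator is the canonical \(\varphi^{\rm sl}_{C,u}\) of Proposition~\ref{prop:canonical}, which is superlinear. Superadditivity together with positive homogeneity gives concavity, so the superlevels are convex.
\item Convex directional fineness lets us choose such a convex \(C\) for every direction and aperture.
\end{itemize}

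The main step is the inclusion \(\{\varphi\geq\lambda\}\subset\sdw B(y,\varepsilon)\). Take \(x\) with \(\varphi(x)\geq\lambda\).
\begin{itemize}
\item Since \(\lambda>0\) we have \(x\in C\). The bound \(\varphi\leq\|\cdot\|\) gives \(\|x\|\geq\lambda>0\), so \(x\ne0_X\).
\item Put \(v=x/\|x\|\in S_X\cap C\subset S_X\cap\cone B(u,\eta)\). Lemma~\ref{lem:directions} gives \(\|v-u\|\leq2\eta\).
\item Choose the base point \(b=\lambda v\). Then
\[
\|b-y\|\leq\lambda\|v-u\|+(s-\lambda)\leq 2\lambda\eta+s-\lambda<\gamma+s-\lambda=\varepsilon,
\]
using \(\eta<\gamma/(2\lambda)\) and the definition of \(\gamma\) in \eqref{eq:local-parameters}.
\item Finally \(x=(\|x\|/\lambda)\,b\) with \(\|x\|/\lambda\geq1\), so \(x\in\sdw B(y,\varepsilon)\).
\end{itemize}
The only delicate point is choosing this base point at radius \(\lambda\) along the direction of \(x\) rather than at radius \(s\). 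This is what makes the parameter choice in \eqref{eq:local-parameters} fit exactly.

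For the Lipschitz statement, let \(z=tb\) with \(t\geq1\) and \(\|b-y\|\leq r\). If \(L=0\), then \(\varphi\) is constant, which contradicts \(\varphi(0_X)=0<1=\varphi(u)\); hence \(L>0\). Then
\[
\varphi(b)\geq\varphi(y)-Lr=s-Lr>\lambda,
\]
so \(\varphi(z)=t\varphi(b)\geq\varphi(b)>\lambda\), using \(\varphi(b)>0\). This proves \(\sdw B(y,r)\subset\{\varphi>\lambda\}\).
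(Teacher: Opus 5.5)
Your proposal is correct and follows the paper's proof essentially step for step: you use the same base point \(\lambda v\) at radius \(\lambda\), the same direction estimate from Lemma~\ref{lem:directions}, and the same Lipschitz bound on \(B(y,r)\) propagated by positive dilation. Your extra checks are harmless refinements that the paper leaves implicit: the existence of \(C\), \(\lambda>0\) for co-radiancy, and \(L>0\).
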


\begin{proof}
Since \(\varphi(y)=s>\lambda\), the first inclusion holds.
If \(\varphi(x)\geq\lambda\), adaptation gives
\(x\in C\setminus\{0_X\}\) and \(\|x\|\geq\lambda\).
For \(v=x/\|x\|\), Lemma~\ref{lem:directions} yields
\(\|\lambda v-y\|\leq\lambda\|v-u\|+s-\lambda
\leq2\lambda\eta+s-\lambda<\varepsilon\).
Hence \(x=(\|x\|/\lambda)(\lambda v)\in\sdw B(y,\varepsilon)\).
Continuity gives openness and closedness of the respective superlevels;
positive homogeneity gives co-radiancy. If \(\varphi\) is superlinear,
its concavity gives convexity. Finally,
\(\varphi(z)\geq s-Lr>\lambda\) on \(B(y,r)\).
Positive dilation preserves this strict inequality, proving the inner inclusion.
\end{proof}

\begin{remark}\label{rem:local-order}
The order of choices in Theorem 4.1 is important. The outer radius \(\varepsilon\) is prescribed first, whereas the inner radius \(r\) can only be determined after choosing \(\lambda\), \(\eta\), and the selected generator \(\varphi\), since \(r<(s-\lambda)/\operatorname{Lip}(\varphi)\). In particular, directional fineness alone does not provide a positive inner radius uniform over all admissible generators.
\end{remark}
The dilation shadow of a ball disjoint from a radiant set remains disjoint from that set. Combining this observation with Theorem 4.1 yields the following point-separation result.
\begin{theorem}\label{thm:radiant-point}
Let \(\cF\) be directionally fine, let \(R\subset X\) be closed and
radiant, and let \(y\notin R\). There exist
\(\varphi\in\cG_{\cF}\) and \(\lambda>0\) such that, for
\(p:=\varphi/\lambda\),
\begin{equation}\label{eq:radiant-point}
 y\in\{p>1\}\subset\{p\geq1\}\subset X\setminus R.
\end{equation}
Both superlevels are co-radiant neighbourhoods of \(y\); the strict one
is open and the weak one is closed. The function \(p\) is Lipschitz and it can be chosen so that \( p(y)>1>\sup_{r\in R}p(r).\)
If \(\cF\) is convex directionally fine, then \(\varphi\) may be chosen
superlinear, and both superlevels are convex.
\end{theorem}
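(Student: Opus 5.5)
The plan is to reduce the theorem to Theorem~\ref{thm:local} by finding a ball around \(y\) whose dilation shadow misses \(R\). Since \(R\) is radiant, \(0_X\in R\), so \(y\ne0_X\); write \(y=su\) with \(s=\|y\|>0\) and \(u\in S_X\). Closedness of \(R\) gives some \(0<\varepsilon<s\) with \(B(y,\varepsilon)\cap R=\varnothing\).

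The first step is to check that \(\sdw B(y,\varepsilon)\cap R=\varnothing\). Suppose \(tz\in R\) with \(t\geq1\) and \(z\in B(y,\varepsilon)\). Since \(R\) is radiant, \(z=(1/t)(tz)\in R\), which contradicts the choice of \(\varepsilon\). This is the observation quoted just before the statement.

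Next we choose \(\lambda,\gamma,\eta\) as in \eqref{eq:local-parameters}; in particular \(\lambda>s-\varepsilon>0\). Directional fineness (Definition~\ref{def:fine}) supplies \(C\in\cF\) with \(u\in\intt C\) and \(C\subset\cone B(u,\eta)\). Then \((C,u)\in\mathcal L_{\cF}\), and we let \(\varphi=\varphi_{C,u}\in\cG_{\cF}\) be its selected generator. Theorem~\ref{thm:local} gives \eqref{eq:local-sandwich}. Setting \(p=\varphi/\lambda\) and combining with the first step yields \eqref{eq:radiant-point}.

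The remaining properties are transferred from Theorem~\ref{thm:local}:
\begin{itemize}
\item the superlevels are co-radiant, with the strict one open and the weak one closed;
\item \(p\) is Lipschitz, with \(\Lip(p)=\Lip(\varphi)/\lambda\);
\item both superlevels contain \(y\), and \(\{p>1\}\) is open, so both are neighbourhoods of \(y\).
\end{itemize}

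The strict inequality \(\sup_R p<1\) is the only point needing care, since \(\{p\geq1\}\cap R=\varnothing\) gives only \(p<1\) pointwise on \(R\). The fix is to rescale. Take \(\lambda'\) with \(s-\varepsilon<\lambda'<\lambda\) and rerun the construction with \(\lambda'\) in place of \(\lambda\); this requires taking \(\eta\) below \(\gamma'/(2\lambda')\) for the corresponding \(\gamma'=\lambda'-(s-\varepsilon)\). Then \(\{\varphi\geq\lambda'\}\cap R=\varnothing\), so \(\varphi<\lambda'\) on \(R\). Now set \(p=\varphi/\mu\) for a \(\mu\) with \(\lambda'<\mu<s\). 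On \(R\) we get \(p\leq\lambda'/\mu<1\), so \(\sup_R p\leq\lambda'/\mu<1\), while \(p(y)=s/\mu>1\).

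The inclusion \(\{p\geq1\}=\{\varphi\geq\mu\}\subset\{\varphi\geq\lambda'\}\subset X\setminus R\) still holds, so \eqref{eq:radiant-point} remains true. Since \(\mu\) lies in \((s-\varepsilon,s)\), the sandwich \eqref{eq:local-sandwich} at level \(\mu\) also holds. The only requirement is that \(\eta\) be admissible for both levels, which holds if \(\eta<\gamma'/(2\lambda')\): this is the smaller constraint because \(t\mapsto 1-(s-\varepsilon)/t\) is increasing. So the main obstacle is only keeping this order of choices straight: first \(\lambda'\) and \(\mu\), then \(\eta\), then \(C\).

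In the convex directionally fine case, \(C\) can be chosen convex. Then \(\varphi=\varphi^{\rm sl}_{C,u}\) is superlinear by Proposition~\ref{prop:canonical}, and both superlevels are convex by the last clause of Theorem~\ref{thm:local}.
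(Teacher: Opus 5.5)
Your proposal is correct and follows the paper's route. You choose a ball $B(y,\varepsilon)$ avoiding $R$, use radiancy to show that its dilation shadow also avoids $R$, and then invoke Theorem~\ref{thm:local}. Raising the level from $\lambda'$ to $\mu\in(\lambda',s)$ is the same rescaling as the paper's replacement of $p$ by $cp$ with $1/p(y)<c<1$; take $c=\lambda'/\mu$. Re-checking the admissibility of $\eta$ at level $\mu$ is unnecessary, because $y\in\{\varphi>\mu\}$ and $\{\varphi\geq\mu\}\subset\{\varphi\geq\lambda'\}$ follow directly from $\varphi(y)=s>\mu$ and monotonicity of superlevels.
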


\begin{proof}
Since \(0_X\in R\) and \(R\) is closed, choose
\(0<\varepsilon<\min\{\|y\|,\dist(y,R)\}\).
The ball \(B(y,\varepsilon)\) avoids \(R\), and so does its dilation
shadow: if \(tz\in R\), with \(t\geq1\) and \(z\in B(y,\varepsilon)\),
radiancy would give \(z\in R\).
Theorem~\ref{thm:local} supplies \(\varphi\) and \(\lambda>0\) with
\(y\in\{\varphi>\lambda\}\subset\{\varphi\geq\lambda\}
\subset\sdw B(y,\varepsilon)\).
For \(p=\varphi/\lambda\), these are precisely the inclusions
in \eqref{eq:radiant-point}. The regularity and superlevel properties
follow from the same theorem.   
Finally, choose $c$ with $1/p(y)<c<1$ and
replace $p$ by $cp$. Then $cp(y)>1>\sup_{r\in R}cp(r)$, as required.

\end{proof}

The preceding point-separation result extends to compact sets: finitely many adapted generators can be combined into a single Lipschitz separator.
\begin{theorem}\label{thm:compact-positive}
Let $\cF$ be a directionally fine family
with selected adapted generators $\cG_{\cF}$.
Let $R\subset X$ be closed and radiant, and let
$K\subset X\setminus R$ be nonempty and compact.
Then there exist $m\in\mathbb N$, \(\varphi_i\in\cG_{\cF}\) and
\(\lambda_i>0\), \(1\leq i\leq m\), such that
\[
 K\subset\bigcup_i\{\varphi_i>\lambda_i\}
 \subset\bigcup_i\{\varphi_i\geq\lambda_i\}\subset X\setminus R.
\]
The positively homogeneous Lipschitz function
\begin{equation}\label{eq:max-positive}
 \Psi:=\max_i\varphi_i/\lambda_i
\end{equation}
satisfies \(\min_K\Psi>1\), \(\Psi(r)<1\) for \(r\in R\), and
\(\Lip(\Psi)\leq\max_i\Lip(\varphi_i)/\lambda_i\).
\end{theorem}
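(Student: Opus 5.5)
We apply Theorem~\ref{thm:radiant-point} at each point of \(K\) and then use compactness to extract a finite subcover. For each \(y\in K\) we have \(y\notin R\), so that theorem supplies \(\varphi_y\in\cG_{\cF}\) and \(\lambda_y>0\) with
\[
y\in\{\varphi_y>\lambda_y\}\subset\{\varphi_y\geq\lambda_y\}\subset X\setminus R .
\]
The strict superlevels \(\{\varphi_y>\lambda_y\}\) are open because \(\varphi_y\) is continuous. They therefore form an open cover of \(K\), and compactness gives points \(y_1,\dots,y_m\). Put \(\varphi_i=\varphi_{y_i}\) and \(\lambda_i=\lambda_{y_i}\). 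The chain of inclusions in the statement follows directly: the first from the cover, the second trivially, and the third because each weak superlevel lies in \(X\setminus R\).

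Next we take \(\Psi=\max_i\varphi_i/\lambda_i\). It is positively homogeneous, since each \(\varphi_i\) is and \(\lambda_i>0\). The Lipschitz bound follows from the elementary fact that a pointwise maximum of finitely many Lipschitz functions is Lipschitz with constant at most the maximum of the constants, together with \(\Lip(\varphi_i/\lambda_i)=\Lip(\varphi_i)/\lambda_i\).

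For \(\Psi(r)<1\) on \(R\), argue by contradiction. If \(\Psi(r)\geq1\) for some \(r\in R\), then \(\varphi_i(r)\geq\lambda_i\) for some \(i\), which puts \(r\) in \(\{\varphi_i\geq\lambda_i\}\subset X\setminus R\), impossible.

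For \(\min_K\Psi>1\), note that \(\Psi\) is continuous, so it attains its minimum on the compact set \(K\) at some \(k\). That point lies in some \(\{\varphi_i>\lambda_i\}\), so \(\Psi(k)\geq\varphi_i(k)/\lambda_i>1\).

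There is no real obstacle. The only point needing care is to use the open strict superlevels for the covering step and the closed weak superlevels for exclusion from \(R\), and both are already provided by Theorem~\ref{thm:radiant-point}.
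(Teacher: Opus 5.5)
Your proposal is correct and takes essentially the same route as the paper. You separate each point of \(K\) via Theorem~\ref{thm:radiant-point}, extract a finite subcover of \(K\) by the open strict superlevels, and verify the stated properties of the finite maximum \(\Psi\) exactly as the paper does.
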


\begin{proof}
For each \(y\in K\), Theorem~\ref{thm:radiant-point} supplies a
strict superlevel containing \(y\) whose weak superlevel avoids \(R\).
Compactness gives finitely many such strict superlevels covering \(K\).
Their maximum \(\Psi\) is pointwise above \(1\) on \(K\) and below
\(1\) on \(R\). It is continuous, so its attained minimum on \(K\)
is strictly greater than \(1\).
The estimate
\(|\max_i p_i(x)-\max_i p_i(z)|\leq\max_i|p_i(x)-p_i(z)|\),
with \(p_i=\varphi_i/\lambda_i\), proves the Lipschitz bound.
Positive homogeneity passes to this finite maximum.
\end{proof}

\begin{remark}\label{rem:max}
Even when the selected generators are superlinear, their finite maximum \(\Psi\) in \eqref{eq:max-positive} need not be superlinear. Consequently, the union of their convex superlevels need not be convex.
\end{remark}

\begin{remark}\label{rem:obstruction}
 The adapted generators used above cannot, in general, provide negative-level separation of co-radiant sets. Indeed, consider \(D=\{x\in X:\|x\|\geq1\}\), and let \(\varphi\) be an adapted generator associated with a direction \(u\in S_X\). Since \(\varphi(0_X)=0\) and \(\varphi(u)=1\), while \(u\in D\), it is impossible to have
\[
\varphi(0_X)>-1\geq\sup_{d\in D}\varphi(d).
\]
This obstruction motivates the norm-corrected generators introduced in Section 5.
\end{remark}

We now specialize the point-separation theorem to the axial family, obtaining explicit separators with quantitative bounds on their parameters.

\begin{corollary}\label{cor:uad-positive}
Let $R\subset X$ be closed and radiant,
$y=su\notin R$, with $s>0$ and $u\in S_X$.
Choose
\[
0<\theta<\min\{\dist(y,R),s\},\qquad
s-\theta<\lambda<s,\qquad
0<\delta<\min\left\{
1,\frac{\theta-s+\lambda}{2\lambda}
\right\}.
\]
For any norming pair $(u,f)$, consider
the axial cone $C_{u,f,\delta}\in\cF_{\AX}$
and its associated generator
$\varphi_{u,f,\delta}\in\cG_{\AX}$.
Set $p:=\varphi_{u,f,\delta}/\lambda$.
Then
\[
y\in\{p>1\}
\subset\{p\geq1\}
\subset X\setminus R.
\]
The strict and weak superlevels are
co-radiant neighbourhoods of $y$.
\end{corollary}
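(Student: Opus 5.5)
The plan is to reduce Corollary~\ref{cor:uad-positive} to Theorem~\ref{thm:local} applied to the axial family, following the proof of Theorem~\ref{thm:radiant-point} with explicit parameters. Take $\varepsilon:=\theta$. Since $0_X\in R$, we have $\dist(y,R)\leq\|y\|=s$, so $0<\varepsilon<s$ and the first requirement of Theorem~\ref{thm:local} holds. With $\gamma=\lambda-(s-\theta)>0$, the prescribed bound on $\delta$ reads $0<\delta<\min\{1,\gamma/(2\lambda)\}$.

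\textbf{Choosing the aperture.} Theorem~\ref{thm:local} needs a cone $C$ with $u\in\intt C$ and $C\subset\cone B(u,\eta)$ for some $\eta<\min\{1,\gamma/(2\lambda)\}$. Choose $\eta$ strictly between $\delta$ and this minimum. Then \eqref{eq:uad-narrow} in Proposition~\ref{prop:uad} gives $C_{u,f,\delta}\subset\cone B(u,\eta)$. Membership $u\in\intt C_{u,f,\delta}$ and the adaptedness of $\varphi_{u,f,\delta}$ are also part of Proposition~\ref{prop:uad}. Hence $(C_{u,f,\delta},u)$ is a localized pair.

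\textbf{Matching the generator.} One subtlety is that Theorem~\ref{thm:local} is stated for the \emph{selected} generator. However, its proof uses only the adapted properties \eqref{eq:adapted}, namely $C=\{\varphi\geq0\}$, $\varphi(u)=1$ and $\varphi\leq\|\cdot\|$, together with continuity and positive homogeneity. Alternatively, one may invoke Definition~\ref{def:selected-generators}: for convex localized pairs the selected generator is $\varphi^{\rm sl}_{C,u}$, which equals $\varphi_{u,f,\delta}$ by Proposition~\ref{prop:uad}. Either route gives
\[
y\in\{\varphi_{u,f,\delta}>\lambda\}\subset\{\varphi_{u,f,\delta}\geq\lambda\}\subset\sdw B(y,\theta).
\]

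\textbf{Avoiding $R$.} Since $\theta<\dist(y,R)$, the ball $B(y,\theta)$ avoids $R$. Its dilation shadow then also avoids $R$: if $tz\in R$ with $t\geq1$ and $z\in B(y,\theta)$, radiancy would give $z\in R$. Dividing by $\lambda>0$ yields the stated chain for $p=\varphi_{u,f,\delta}/\lambda$.

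\textbf{Neighbourhood properties.} Continuity and positive homogeneity make both superlevels co-radiant. The strict superlevel is open and contains $y$, so both superlevels are neighbourhoods of $y$.

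The only step needing care is the choice of $\eta$ strictly above $\delta$ and the identification of the axial function with the generator used by Theorem~\ref{thm:local}. Both are handled by Proposition~\ref{prop:uad}, so no real obstacle is expected.
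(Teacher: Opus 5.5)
Your proposal is correct and is essentially the paper's argument. The paper inlines the proof of Theorem~\ref{thm:local}: it uses $C_{u,f,\delta}\subset\cone B(u,\delta)$ and Lemma~\ref{lem:directions} to get $\|v-u\|\leq2\delta$, hence $\{\varphi_{u,f,\delta}\geq\lambda\}\subset\sdw B(y,\theta)$. You instead invoke Theorem~\ref{thm:local} as stated, with an intermediate aperture $\delta<\eta<\min\{1,\gamma/(2\lambda)\}$; that detour is needed only because \eqref{eq:uad-narrow} is stated with strict inequality, and the radiancy and neighbourhood steps are the same as in the paper.
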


\begin{proof}
Write \(\varphi=\varphi_{u,f,\delta}\).
Its normalization gives \(\varphi(y)=s>\lambda\).
If \(\varphi(x)\geq\lambda\), then \(\|x\|\geq\lambda\) and
\(v=x/\|x\|\in C_{u,f,\delta}\cap S_X\).
Proposition~\ref{prop:uad} and Lemma~\ref{lem:directions} give
\(\|v-u\|\leq2\delta\).
Consequently \(\|\lambda v-y\|\leq2\lambda\delta+s-\lambda<\theta\),
and \(x=(\|x\|/\lambda)(\lambda v)\in\sdw B(y,\theta)\).
This shadow avoids \(R\) by radiancy, since \(B(y,\theta)\) does.
The inclusions follow. Continuity and positive homogeneity give
the asserted co-radiant neighbourhoods.
\end{proof}

\section{Bilateral separation}\label{sec:bilateral}

 The positive-level separation results of Section \ref{sec:positive} do not extend directly to negative levels using unmodified adapted generators. We overcome this limitation by introducing norm-corrected generators, which yield separation results for both radiant and co-radiant sets. We then use radial calibration to characterize admissible separating levels and obtain representations of the gauge and cogauge. Finally, we specialize these constructions to the axial family, obtaining explicit separators.

\begin{definition}\label{def:norm-corrected}
Let $\cF$ be a directionally fine family of closed cones, and let
\(\cG_{\cF}\) be its family of selected Lipschitz adapted generators
from Definition~\ref{def:selected-generators}.
For \(\varphi\in\cG_{\cF}\), \(\alpha\geq0\), and \(\ell>0\), set
\[
P_{\alpha,\ell,\varphi}:=\dfrac{\alpha\varphi-\|\cdot\|}{\ell}.
\]

The associated \emph{norm-corrected family} is
\begin{equation}\label{eq:norm-corrected}
 \cP_{\cF}^{\nc}
 :=\left\{
 P_{\alpha,\ell,\varphi}:
 \alpha\geq0,\ \ell>0,\ \varphi\in\cG_{\cF}
 \right\}.
\end{equation}
\end{definition}

The parameter $\alpha$ controls the balance between the generator and
the norm, determining the sign regime. The parameter $\ell$ rescales
the nonzero levels without changing the zero superlevel.

The following proposition establishes the basic properties
of the norm-corrected functions and describes the two regimes
$\alpha>1$ and $0\leq\alpha<1$.

\begin{proposition}\label{prop:regimes}
Let $\cF$ be a directionally fine family,
and let $\cP_{\cF}^{\rm nc}$ be the norm-corrected
family defined in \eqref{eq:norm-corrected}.
Every $P=P_{\alpha,\ell,\varphi}
\in\cP_{\cF}^{\rm nc}$
is positively homogeneous and Lipschitz,
with $\Lip(P)\leq
(\alpha\Lip(\varphi)+1)/\ell$, and is superlinear when $\varphi$ is.
\begin{enumerate}
\item
For fixed $\alpha>1$, the
family of cones $
\{C_{\alpha,\varphi}:
\varphi\in\cG_{\cF}\}
$, $C_{\alpha,\varphi}:=\{\alpha\varphi-\|\cdot\|\geq0\}$,
is
a directionally fine family.
The function
\begin{equation}\label{eq:H-normalization}
H_{\alpha,\varphi}
=\frac{\alpha\varphi-\|\cdot\|}{\alpha-1}
\end{equation}
is adapted
to $C_{\alpha,\varphi}$ at $u$
whenever $\varphi=\varphi_{C,u}$
is the selected generator of a localized
pair $(C,u)$.
For superlinear generators,
the corrected cones are convex.

\item For $0\leq\alpha<1$,
the following inequality holds:
\begin{equation}\label{eq:coercive-negative}
P(x)\leq-\frac{1-\alpha}{\ell}\|x\|,
\qquad \forall x\in X.
\end{equation}
Thus $\{P\geq0\}=\{0_X\}$,
and $\{P>-1\}$, $\{P\geq-1\}$
are bounded radiant neighbourhoods of zero,
respectively open and closed.
They are convex when $\varphi$ is superlinear.
\end{enumerate}
\end{proposition}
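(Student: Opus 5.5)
The general properties come first and are direct. Positive homogeneity of $P=P_{\alpha,\ell,\varphi}$ follows because both $\varphi$ and $\|\cdot\|$ are positively homogeneous and $\alpha\geq0$, $\ell>0$. The Lipschitz bound $\Lip(P)\leq(\alpha\Lip(\varphi)+1)/\ell$ is the triangle inequality applied to $\alpha\varphi$ and the $1$-Lipschitz norm. If $\varphi$ is superlinear, then $\alpha\varphi$ is superadditive and $-\|\cdot\|$ is superadditive, since the norm is subadditive; dividing by $\ell>0$ keeps $P$ superlinear.

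For (1), fix $\alpha>1$ and a localized pair $(C,u)$ with $\varphi=\varphi_{C,u}$ and $C\subset\cone B(u,\eta)$. Since $\|x\|\geq0$, we have $\{\alpha\varphi-\|\cdot\|\geq0\}\subset\{\varphi\geq0\}=C$, so $C_{\alpha,\varphi}\subset C\subset\cone B(u,\eta)$.
\begin{itemize}
\item \emph{Closed cone.} $C_{\alpha,\varphi}$ is a closed cone by continuity and positive homogeneity.
\item \emph{Proper.} It contains $u$, and it is different from $X$ because it lies inside the proper cone $C$.
\item \emph{Interior.} $u\in\intt C_{\alpha,\varphi}$, because $\alpha\varphi(u)-\|u\|=\alpha-1>0$ and the function is continuous.
\end{itemize}
Given any $u\in S_X$ and $0<\eta<1$, directional fineness of $\cF$ supplies such a localized pair, so the corrected family is directionally fine. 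For adaptedness of $H_{\alpha,\varphi}$:
\begin{itemize}
\item it is continuous and positively homogeneous;
\item $\{H\geq0\}=C_{\alpha,\varphi}$, since $\alpha-1>0$;
\item $H(u)=(\alpha-1)/(\alpha-1)=1$;
\item $H(x)\leq(\alpha\|x\|-\|x\|)/(\alpha-1)=\|x\|$, using $\varphi\leq\|\cdot\|$.
\end{itemize}
Convexity of the corrected cones for superlinear $\varphi$ follows because the zero superlevel of the concave function $\alpha\varphi-\|\cdot\|$ is convex.

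For (2), take $0\leq\alpha<1$. From $\varphi\leq\|\cdot\|$ and $\alpha\geq0$ we get $\alpha\varphi(x)-\|x\|\leq-(1-\alpha)\|x\|$, which is \eqref{eq:coercive-negative}.
\begin{itemize}
\item \emph{Zero superlevel.} $P(x)\geq0$ forces $x=0_X$, and $P(0_X)=0$, so $\{P\geq0\}=\{0_X\}$.
\item \emph{Boundedness.} $P(x)\geq-1$ implies $\|x\|\leq\ell/(1-\alpha)$, so both superlevels are bounded.
\item \emph{Neighbourhoods.} They contain a neighbourhood of $0_X$ because $P$ is continuous with $P(0_X)=0>-1$.
\item \emph{Radiant.} If $P(x)>-1$ (or $\geq -1$) and $0\leq t\leq1$, then $P(tx)=tP(x)\geq\min\{P(x),0\}$, since $tP(x)$ lies between $P(x)$ and $0$. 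Hence $tx$ stays in the superlevel.
\item \emph{Open/closed.} By continuity, the strict superlevel is open and the weak one is closed.
\item \emph{Convexity.} For superlinear $\varphi$, $P$ is concave, so its superlevels are convex.
\end{itemize}

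The only mildly delicate point is the radiancy argument at negative levels, which needs the observation $tP(x)\geq\min\{P(x),0\}$; the rest is bookkeeping.
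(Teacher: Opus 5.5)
Your proof is correct and follows essentially the same route as the paper: termwise homogeneity and the triangle inequality give the general claims, the inclusion $C_{\alpha,\varphi}\subset\{\varphi\geq0\}=C$ together with $\alpha\varphi(u)-\|u\|=\alpha-1>0$ gives part (i), and the estimate $\alpha\varphi-\|\cdot\|\leq-(1-\alpha)\|\cdot\|$ gives part (ii). Your radiancy step $tP(x)\geq\min\{P(x),0\}$ is only a slightly more explicit version of the paper's $P(tx)=tP(x)\geq-1$.
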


\begin{proof}
Positive homogeneity follows termwise. The reverse triangle
inequality gives the Lipschitz estimate, and the sum of the superlinear
functions \(\alpha\varphi\) and \(-\|\cdot\|\) is superlinear.

For \textup{(i)}, the numerator equals \(\alpha-1>0\) at \(u\).
Continuity therefore places \(u\) in the interior of its zero superlevel.
This cone is contained in \(\{\varphi\geq0\}\), so it is proper
and inherits the selected cone's localization.
Moreover, \(H_{\alpha,\varphi}(u)=1\) and
\(\alpha\varphi(x)-\|x\|\leq(\alpha-1)\|x\|\).
Thus \(H_{\alpha,\varphi}\) is adapted.
Directional fineness follows by choosing the original cones at
arbitrarily small apertures. Concavity gives convexity in the
superlinear case.

For \textup{(ii)}, the same inequality proves
\eqref{eq:coercive-negative}. Put \(c=(1-\alpha)/\ell>0\).
If \(P(x)\geq-1\), then \(-1\leq P(x)\leq-c\|x\|\), so
\(\|x\|\leq1/c\); strict superlevels satisfy the strict bound.
The estimate also gives \(\{P\geq0\}=\{0_X\}\).
Since \(P(0_X)=0>-1\), continuity gives a neighbourhood of zero.
For \(0\leq t\leq1\), \(P(x)\geq-1\) implies
\(P(tx)=tP(x)\geq-1\), and likewise for the strict superlevel.
This proves radiancy. Continuity and concavity give the remaining properties.
\end{proof}

\begin{remark}
The parameter $\alpha=1$ separates two different
geometric regimes. Indeed,
since $\varphi(u)=1$ and $\varphi\leq\|\cdot\|$, the corrected function
$P=(\alpha\varphi-\|\cdot\|)/\ell$ satisfies
\[
 P(u)=\frac{\alpha-1}{\ell},
 \qquad
 P(x)\leq\frac{\alpha-1}{\ell}\|x\|.
\]
If $0\leq\alpha<1$, the negative superlevel $\{P\geq-1\}$ is
bounded. If $\alpha>1$,
$P(tu)\to+\infty$ as $t\to+\infty$.
At $\alpha=1$, $P\leq0$, while $P(tu)=0$ for every $t\geq0$:
all superlevels at strictly positive levels are empty
and $\{P\geq-1\}$ is unbounded.
\end{remark}

We now use these two regimes to separate points from closed radiant and co-radiant sets at positive and negative levels, respectively.
\begin{theorem}\label{thm:bilateral}
Let $\cF$ be a directionally fine family of
closed cones in $X$, let $\cG_{\cF}$ be
its family of selected adapted generators,
and let $\cP_{\cF}^{\rm nc}$ be the associated
norm-corrected family.
The following assertions hold.
\begin{enumerate}
\item If $R$ is closed and radiant and $y\notin R$,
then, for every fixed $\alpha>1$, some
$\varphi\in\cG_{\cF}$ and $\ell>0$ exist such that,
with $P:=P_{\alpha,\ell,\varphi}
\in\cP_{\cF}^{\rm nc}$,
\begin{equation}\label{eq:bilateral-positive}
 y\in\{P>1\}\subset\{P\geq1\}\subset X\setminus R.
\end{equation}
The strict and weak superlevels are co-radiant
neighbourhoods of $y$.

\item If $D$ is closed and co-radiant and $y\notin D$,
then some $0\leq\alpha<1$, $\ell>0$, and
$\varphi\in\cG_{\cF}$ exist such that,
with $P:=P_{\alpha,\ell,\varphi}
\in\cP_{\cF}^{\rm nc}$,
\begin{equation}\label{eq:bilateral-negative}
 y\in\{P>-1\}\subset\{P\geq-1\}\subset X\setminus D.
\end{equation}
The strict and weak superlevels are bounded
radiant neighbourhoods of $0_X$ and $y$.
For $y=0_X$, one may take $\alpha=0$ and
$0<\ell<d_D$.
\end{enumerate}
In both cases the strict superlevel is open and
the weak superlevel is closed.
If $\mathcal F$ is convex directionally fine, the localized
witnesses in both constructions may be chosen convex. With their
canonical superlinear generators, the separating functions are
superlinear and their superlevel sets are convex.
\end{theorem}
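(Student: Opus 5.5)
For (i) I will reduce to the positive-level separation already available. Fix \(\alpha>1\). Write \(y=su\) and pick \(\varepsilon\) with \(0<\varepsilon<\min\{s,\dist(y,R)\}\). By radiancy, \(\sdw B(y,\varepsilon)\) avoids \(R\), exactly as in the proof of Theorem~\ref{thm:radiant-point}.

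The idea is to apply Theorem~\ref{thm:local} to the corrected cones rather than to \(\cF\) itself. By Proposition~\ref{prop:regimes}(i), the cones \(C_{\alpha,\varphi}\) form a directionally fine family, and \(H_{\alpha,\varphi}\) is adapted to \((C_{\alpha,\varphi},u)\). Moreover \(C_{\alpha,\varphi}\subset C\subset\cone B(u,\eta)\), so the localization is inherited. The proof of Theorem~\ref{thm:local} only uses adaptation and the localization \(C\subset\cone B(u,\eta)\), so it applies verbatim to \(H=H_{\alpha,\varphi}\). It yields \(\lambda\in(s-\varepsilon,s)\) with
\[
y\in\{H>\lambda\}\subset\{H\geq\lambda\}\subset\sdw B(y,\varepsilon)\subset X\setminus R .
\]
Setting \(\ell:=\lambda(\alpha-1)\) gives \(P_{\alpha,\ell,\varphi}=H/\lambda\), which proves \eqref{eq:bilateral-positive}. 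The superlevels are co-radiant by homogeneity, open or closed by continuity, and they are neighbourhoods of \(y\) because \(H(y)=s>\lambda\).

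For (ii) I will first dispose of the case \(y=0_X\). Take \(\alpha=0\), so \(P=-\|\cdot\|/\ell\) and \(\{P\geq-1\}=\ell B_X\). This ball misses \(D\) once \(\ell<d_D=\dist(0_X,D)\), using Proposition~\ref{prop:profile}.

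For \(y\neq0_X\), write \(y=su\) and use \eqref{eq:segment-gap} to get \(\beta:=\dist([0_X,y],D)>0\). The goal is a bounded set, containing \([0_X,y]\) in its interior, that is contained in a thin neighbourhood of the segment \([0_X,y]\) and hence misses \(D\). I would choose a localized pair \((C,u)\) with small aperture \(\eta\) and \(\alpha<1\) close to \(1\), then set \(\ell\) so that \(P(y)=(\alpha-1)s/\ell>-1\), i.e. \(\ell>(1-\alpha)s\).

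If \(P(x)\geq-1\), then \(\alpha\varphi(x)\geq\|x\|-\ell\). This forces two things. First, \(\|x\|\leq\ell/(1-\alpha)\) by \eqref{eq:coercive-negative}. Second, either \(\|x\|\) is small, or \(\varphi(x)>0\); in the latter case \(x\in C\) and the direction of \(x\) is within \(2\eta\) of \(u\) by Lemma~\ref{lem:directions}. The radial bound \(\ell/(1-\alpha)\) must not exceed roughly \(s+\beta/2\), otherwise the set reaches past \(y\) along the ray and could meet \(D\).

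This parameter balancing is the main obstacle. I would take \(\ell=(1-\alpha)s'\) with \(s<s'<s+\beta/2\), which gives \(P(y)=-s/s'>-1\) and \(\|x\|\leq s'\). The remaining worry is points with \(\varphi(x)<0\) or small \(\varphi\). For these, \(\|x\|\leq\alpha\varphi(x)+\ell\leq\ell=(1-\alpha)s'\), so they lie in a ball of radius \((1-\alpha)s'<\beta\) once \(\alpha\) is close to \(1\); that ball is within distance \(\beta\) of \(0_X\) and so misses \(D\).

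For \(x\in C\) with \(\|x\|\leq s'\), the point \(\|x\|u\) lies on \([0_X,s'u]\). The distance from \(x\) to it is at most \(2\eta s'\), so \(\dist(x,[0_X,s'u])\leq 2\eta s'\). Since \(D\) is co-radiant, \(\dist([0_X,s'u],D)\) is controlled by \(\beta\) minus \((s'-s)\), which keeps it positive. Concretely, I would verify that the thickening of \([0_X,s'u]\) by \(\beta/2\) misses \(D\): a point within \(\beta/2\) of \(tu\) with \(t\in(s,s']\) is within \(\beta/2+(t-s)<\beta\) of \(y\). Choosing \(\eta<\beta/(4s')\) then finishes the argument.

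Boundedness, radiancy, openness and closedness of the superlevels come from Proposition~\ref{prop:regimes}(ii). For the convex case, I would choose \(C\) convex with its canonical superlinear generator. Then \(P\) is superlinear by Proposition~\ref{prop:regimes}, and the corrected cones are convex, so all superlevels are convex.
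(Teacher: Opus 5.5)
Your proof is correct, and it follows the paper's approach. Part (i) is the paper's argument: rerun Theorem~\ref{thm:local} with the adapted functions $H_{\alpha,\varphi}$ on the corrected cones, then put $\ell=(\alpha-1)\lambda$. Part (ii) has the paper's skeleton: $\alpha=1-\ell/a$ with $a$ (your $s'$) slightly above $s$, the bound $\|x\|\leq a$ on $\{P\geq-1\}$, and the split into $x\in C$ and $x\notin C$.

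The only difference is how you certify that $C\cap aB_X$ misses $D$.
\begin{itemize}
\item The paper fixes $b=\dist(au,D)>0$. For any $d\in D\cap C\cap aB_X$, co-radiancy gives $a\,d/\|d\|\in D$, and this point lies within $2a\eta<b$ of $au$, a contradiction.
\item You place $C\cap s'B_X$ inside a $\beta/2$-tube around $[0_X,s'u]$ and exclude $D$ using the segment gap $\beta=\dist([0_X,y],D)$ from \eqref{eq:segment-gap}. This is the device the paper uses for the axial Corollary~\ref{cor:uad-bilateral}.
\end{itemize}
Your route requires $\ell<\beta$ rather than $\ell<d_D$ for the points outside $C$. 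This is harmless, since $\beta\leq d_D$ and you take $\alpha$ close to $1$.
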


\begin{proof}
For \textup{(i)}, apply the proof of Theorem~\ref{thm:radiant-point} to the corrected
cones and adapted functions \(H_{\alpha,\varphi}\) of
Proposition~\ref{prop:regimes}. If the resulting level is \(\lambda>0\),
set \(\ell=(\alpha-1)\lambda\). Then
\[
\frac{H_{\alpha,\varphi}}{\lambda}
=P_{\alpha,\ell,\varphi},
\]
and Theorem~\ref{thm:radiant-point} gives
\(P_{\alpha,\ell,\varphi}(y)>1>P_{\alpha,\ell,\varphi}(r)\) for \(r\in R\).
Hence
\[
y\in\{P_{\alpha,\ell,\varphi}>1\}
\subset\{P_{\alpha,\ell,\varphi}\geq1\}
\subset X\setminus R.
\]

For \textup{(ii)}, let \(y=su\neq0_X\). Choose \(a>s\), sufficiently close
to \(s\), so that \(au\notin D\), put \(b=\dist(au,D)>0\), and select
\begin{equation}
\label{eq:abstract-cap-parameters}
0<\eta<\min\left\{1,\frac{b}{2a}\right\},\qquad
0<\ell<\min\{d_D,a\},\qquad
\alpha=1-\frac{\ell}{a}.
\end{equation}
Choose \(C\subset\cone B(u,\eta)\) and its selected generator \(\varphi\).
Then
\begin{equation}
\label{eq:truncated-cone-empty}
D\cap C\cap aB_X=\varnothing.
\end{equation}
Indeed, if \(d\) belonged to this intersection and
\(v=d/\|d\|\), then co-radiancy would give \(av\in D\), whereas
\[
\|av-au\|\leq2a\eta<b,
\]
contradicting the definition of \(b\).

Let \(P=P_{\alpha,\ell,\varphi}\). Since
\[
P(y)=\frac{(\alpha-1)s}{\ell}=-\frac{s}{a}>-1,
\]
one has \(y\in\{P>-1\}\). If \(P(x)\geq-1\), then
\[
\|x\|\leq\ell+\alpha\varphi(x)
\leq\ell+\alpha\|x\|,
\]
and therefore \(\|x\|\leq a\). If \(x\in C\),
\eqref{eq:truncated-cone-empty} gives \(x\notin D\). If \(x\notin C\),
then \(\varphi(x)<0\), so
\[
\|x\|\leq\ell+\alpha\varphi(x)<\ell<d_D,
\]
and again \(x\notin D\). Thus
\[
y\in\{P>-1\}\subset\{P\geq-1\}\subset X\setminus D.
\]

For \(y=0_X\), take \(\alpha=0\) and \(0<\ell<d_D\); then
\(P=-\|\cdot\|/\ell\) and
\(\{P\geq-1\}=\ell B_X\subset X\setminus D\).
Proposition~\ref{prop:regimes} gives all remaining assertions.
\end{proof}
The following example illustrates both separation regimes of Theorem 5.1 through explicit constructions in the Euclidean plane.
\begin{example}
\label{ex:bilateral-axial}
In Euclidean \(\R^2\), let \(f_1(x)=x_1\) and \(f_2(x)=x_2\), and put
\(\varphi_1=\varphi_{e_1,f_1,1}=x_1-|x_2|\) and
\(\varphi_2=\varphi_{e_2,f_2,1}=x_2-|x_1|\).
\begin{enumerate}
\item Let
\(R=\{x\in\R^2:x_1\leq |x_2|/(1+|x_2|)\}\) and
\(y_R=(1/2,0)\). The set \(R\) is closed and radiant. Set
\(P_R=P_{2,1/6,\varphi_1}=6(2\varphi_1-\|\cdot\|_2)\). For \(r\in R\),
\(\varphi_1(r)\leq0\), hence \(P_R(r)\leq-6\|r\|_2\leq0\). Thus
\[
 P_R(y_R)=3>1>\max_{r\in R}P_R(r)=0,
 \qquad \{P_R\geq1\}\subset\R^2\setminus R.
\]
The positive superlevel is co-radiant and unbounded.
\item Let \(D=\{x\in\R^2:\|x\|_2\geq1\}\) and \(y_D=(0,1/2)\).
Set \(P_D=P_{0,3/4,\varphi_2}=-(4/3)\|\cdot\|_2\). Then
\[
 P_D(y_D)=-\frac23>-1>-\frac43=\max_{d\in D}P_D(d),
 \qquad \{P_D\geq-1\}=\{x:\|x\|_2\leq3/4\}\subset\R^2\setminus D.
\]
The weak superlevel is a bounded radiant neighbourhood of \(0_X\)
and \(y_D\).
\end{enumerate}
\end{example}
The preceding point-separation results also yield finite separation of sets under suitable compactness assumptions.

\begin{corollary}\label{cor:compact-negative}
Let $\cF$ be a directionally fine family of 
closed cones in $X$, with selected adapted generators
$\cG_{\cF}$ and associated norm-corrected family
$\cP_{\cF}^{\nc}$.
Let $R$ be closed and radiant and $D$ closed and co-radiant,
with $R\cap D=\varnothing$. Then:
\begin{enumerate}
\item
If $R$ is compact,
there exist $m\in\mathbb{N}$ and functions
$P_i:=P_{\alpha_i,\ell_i,\varphi_i}
\in\cP_{\cF}^{\nc}$, with
$\varphi_i\in\cG_{\cF}$, $\ell_i>0$ and
$0\leq\alpha_i<1$, $1\leq i\leq m$, such that
$\Psi_R:=\max_{1\leq i\leq m}P_i$ satisfies
\[
 \min_R\Psi_R>-1>\Psi_R(d)
 \qquad \forall d\in D.
\]
Its strict and weak superlevels at $-1$ are bounded
and radiant; the weak superlevel is disjoint from $D$.
The assertion also holds with any nonempty compact
$K\subset X\setminus D$ in place of $R$.

\item
If the radial base $\Gamma_D$ is compact, then,
for any fixed $\alpha>1$,
there exist $n\in\mathbb{N}$ and functions
$Q_j:=P_{\alpha,\ell_j,\varphi_j}
\in\cP_{\cF}^{\nc}$, with
$\varphi_j\in\cG_{\cF}$ and $\ell_j>0$,
$1\leq j\leq n$, such that
$\Psi_D:=\max_{1\leq j\leq n}Q_j$ satisfies
\[
 \Psi_D(r)<1<
 \min_{a\in\Gamma_D}\Psi_D(a)
 \leq\Psi_D(d)
 \qquad \forall r\in R,\ \forall d\in D.
\]
Its strict and weak superlevels at $1$ are
co-radiant; the weak superlevel is disjoint from $R$.
\end{enumerate}
In either case, the resulting function is
Lipschitz and positively homogeneous.
Its strict superlevel is open and its weak superlevel
is closed.
\end{corollary}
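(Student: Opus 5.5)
For (i), the plan is to combine the pointwise negative-level separation of Theorem~\ref{thm:bilateral}(ii) with a compactness argument, as in Theorem~\ref{thm:compact-positive}. Let \(K\) be \(R\) or any nonempty compact set disjoint from \(D\). For each \(y\in K\), Theorem~\ref{thm:bilateral}(ii) supplies \(P_y=P_{\alpha_y,\ell_y,\varphi_y}\) with \(0\leq\alpha_y<1\) and \(y\in\{P_y>-1\}\subset\{P_y\geq-1\}\subset X\setminus D\). The strict superlevels are open, so finitely many of them, say for \(P_1,\dots,P_m\), cover \(K\). Set \(\Psi_R=\max_iP_i\).

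Then \(\Psi_R>-1\) pointwise on \(K\), and continuity and compactness give \(\min_K\Psi_R>-1\). For \(d\in D\) and each \(i\) we have \(d\notin\{P_i\geq-1\}\), so \(P_i(d)<-1\) and hence \(\Psi_R(d)<-1\). The weak superlevel \(\{\Psi_R\geq-1\}=\bigcup_i\{P_i\geq-1\}\) is a finite union of bounded radiant sets disjoint from \(D\), so it is bounded, radiant and disjoint from \(D\); the strict superlevel is handled the same way. Positive homogeneity passes to the maximum, and the Lipschitz bound follows from the max estimate in the proof of Theorem~\ref{thm:compact-positive} together with Proposition~\ref{prop:regimes}. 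Openness and closedness follow from continuity.

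For (ii), fix \(\alpha>1\) and run the same argument with the roles reversed, separating points of the compact set \(\Gamma_D\) from \(R\). Since \(\Gamma_D\subset D\) and \(R\cap D=\varnothing\), every \(a\in\Gamma_D\) lies outside \(R\). Theorem~\ref{thm:bilateral}(i) then gives \(Q_a=P_{\alpha,\ell_a,\varphi_a}\) with \(a\in\{Q_a>1\}\subset\{Q_a\geq1\}\subset X\setminus R\). Extracting a finite subcover of \(\Gamma_D\) and taking \(\Psi_D=\max_jQ_j\), we get \(\min_{\Gamma_D}\Psi_D>1\) by compactness, and \(\Psi_D(r)<1\) for \(r\in R\) because \(Q_j(r)<1\) for every \(j\).

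The remaining inequality \(\min_{\Gamma_D}\Psi_D\leq\Psi_D(d)\) is the step that needs care, and the plan is to use the radial representation \eqref{eq:radial-representation}. Any \(d\in D\) can be written \(d=ta\) with \(a=\rho_D(u)u\in\Gamma_D\) and \(t\geq1\). Positive homogeneity gives \(\Psi_D(d)=t\Psi_D(a)\), and since \(\Psi_D(a)>1>0\) and \(t\geq1\), this is at least \(\Psi_D(a)\geq\min_{\Gamma_D}\Psi_D\).

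Co-radiancy of the superlevels at level \(1\) follows from homogeneity, since \(\Psi_D(tx)=t\Psi_D(x)\geq\Psi_D(x)\) whenever \(\Psi_D(x)\geq1\) and \(t\geq1\). Disjointness of the weak superlevel from \(R\) is the union argument used in (i). The Lipschitz and homogeneity properties, and openness and closedness, follow as in (i).
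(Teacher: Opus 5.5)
Your proposal is correct and follows the paper's proof essentially step for step: pointwise separation from Theorem~\ref{thm:bilateral}, a finite subcover by the open strict superlevels, and, for the last inequality in (ii), the radial representation $d=ta$ with $a\in\Gamma_D$, $t\geq1$, combined with positive homogeneity. The only cosmetic difference is in (i): you get boundedness of the superlevels because they are finite unions of bounded sets, whereas the paper uses the uniform estimate $\Psi_R\leq-(\min_i c_i)\|\cdot\|$ with $c_i=(1-\alpha_i)/\ell_i$.
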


\begin{proof}
For \textup{(i)}, apply Theorem~\ref{thm:bilateral}(ii) at every
point of \(R\). The strict superlevels form an open cover;
compactness gives a finite subcover.
Their maximum \(\Psi_R\) is continuous and exceeds \(-1\) on \(R\),
so its attained minimum also exceeds \(-1\).
All selected functions are below \(-1\) on \(D\), hence so is their
finite maximum. Moreover,
\(\Psi_R\leq-(\min_i c_i)\|\cdot\|\), where
\(c_i=(1-\alpha_i)/\ell_i>0\), proving boundedness of the superlevels.
The same argument applies to any compact \(K\subset X\setminus D\).

For \textup{(ii)}, fix \(\alpha>1\) and apply
Theorem~\ref{thm:bilateral}(i) at each \(a\in\Gamma_D\).
A finite subcover gives \(\min_{\Gamma_D}\Psi_D>1\) and
\(\Psi_D(r)<1\) for \(r\in R\).
Every \(d\in D\) has the form \(d=ta\), with \(a\in\Gamma_D\),
\(t\geq1\); thus \(\Psi_D(d)=t\Psi_D(a)\geq\min_{\Gamma_D}\Psi_D\).
Finite maxima preserve Lipschitz continuity and positive homogeneity.
These properties give the remaining superlevel assertions in both parts.
\end{proof}
The following example illustrates the finite separation result of Corollary \ref{cor:compact-negative} and shows that the resulting superlevel set need not be convex, even when the individual superlevels are convex.
\begin{example}\label{ex:compact-negative}
In Euclidean \(\R^2\), let \(D=\{x:\|x\|_2\geq1\}\) and
\(R=[0,y_1]\cup[0,y_2]\), where \(y_1=(1/2,0)\) and
\(y_2=(0,1/2)\). The set \(R\) is compact, radiant, and disjoint from
\(D\). Let \(f_i(x)=x_i\) and
\(\varphi_i=\varphi_{e_i,f_i,1}\), so that
\(\varphi_1(x)=x_1-|x_2|\) and \(\varphi_2(x)=x_2-|x_1|\).
Set \(P_i=P_{1/2,3/10,\varphi_i}\) and \(\Psi=\max\{P_1,P_2\}\).
Then \(P_i(y_i)=-5/6\), \(P_i(y_j)=-5/2\) for \(i\ne j\), and
\(P_i(x)\leq-(5/3)\|x\|_2\). By homogeneity,
\(\min_R\Psi=-5/6>-1\), and
\(\Psi<-1\) on \(D\). The weak superlevel is \(G_1\cup G_2\), where
\[
 G_1=\{x:\|x\|_2+(|x_2|-x_1)/2\leq3/10\},\qquad
 G_2=\{x:\|x\|_2+(|x_1|-x_2)/2\leq3/10\}.
\]
Both sets are convex and contain zero in their interior, and
\(G_1\cup G_2\subset(3/5)B_X\). Their union is not convex:
\(m=(1/4,1/4)\) satisfies
\(P_1(m)=P_2(m)=-5\sqrt2/6<-1\).
Nor is \(\Psi\) superlinear, since
\[
 \Psi(e_1+e_2)=-10\sqrt2/3<-10/3=\Psi(e_1)+\Psi(e_2).
\]
\end{example}

Positive rescaling preserves the norm-corrected family:
\(cP_{\alpha,\ell,\varphi}=P_{\alpha,\ell/c,\varphi}\) for \(c>0\).
We first use this property to normalize separators along
prescribed rays. This radial calibration will then allow us to
characterize admissible separating levels through \(\sigma_R\)
and \(\rho_D\), and to obtain representations of the Minkowski
gauge and cogauge in terms of strict separators.

\begin{corollary}\label{cor:radial-calibration}
Let \(\cF\) be a directionally fine family of closed
cones in \(X\), with selected adapted generators \(\cG_{\cF}\)
and associated norm-corrected family \(\cP_{\cF}^{\nc}\).
Let \(u\in S_X\), \(s,\lambda>0\), and \(y=su\).
\begin{enumerate}
\item Let \(R\) be closed and radiant. If \(0<\varepsilon<1\) and
\(\sigma_R(u)<(1-\varepsilon)\lambda<\lambda<s\), then, for every
fixed \(\alpha>1\), there exist \(\varphi\in\cG_{\cF}\) and \(\ell>0\)
such that
\(p:=P_{\alpha,\ell,\varphi}\in\cP_{\cF}^{\nc}\)
satisfies
\[
 p(tu)=t/\lambda\quad(t\geq0),\qquad
 \sup_Rp\leq1-\varepsilon<1<p(y).
\]

\item Let \(D\) be closed and co-radiant. If \(\varepsilon>0\) and
\(s<\lambda<(1+\varepsilon)\lambda<\rho_D(u)\), then
there exist \(\varphi\in\cG_{\cF}\), \(0\leq\alpha<1\)
and \(\ell>0\) such that
\(p:=P_{\alpha,\ell,\varphi}\in\cP_{\cF}^{\nc}\) satisfies
\[
 p(tu)=-t/\lambda\quad(t\geq0),\qquad
 \sup_Dp\leq-1-\varepsilon<-1<p(y).
\]
\end{enumerate}
If, in addition, \(\cF\) consists of convex cones,
the separators in both cases are superlinear.
\end{corollary}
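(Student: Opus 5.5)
The plan is to apply Theorem~\ref{thm:bilateral} not at \(y\) but at an auxiliary point on the ray \(\Rp u\) placed strictly between the radial threshold and \(\lambda\). We then use the rescaling identity \(cP_{\alpha,\ell,\varphi}=P_{\alpha,\ell/c,\varphi}\) to normalize the separator along the ray. The key structural fact is that the generator \(\varphi\) produced in that theorem is the selected generator of a localized pair \((C,u)\) whose anchor is the direction \(u\) of the point being separated. Hence \(\varphi(u)=1\), and every \(P_{\alpha,\ell,\varphi}\) is linear along the ray:
\[
P_{\alpha,\ell,\varphi}(tu)=\frac{t(\alpha\varphi(u)-1)}{\ell}=\frac{(\alpha-1)t}{\ell},\qquad t\geq0 .
\]

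For (i), put \(z=(1-\varepsilon)\lambda u\). Since \(\sigma_R(u)<(1-\varepsilon)\lambda\) and \(R\) is closed, \(z\notin R\). Fix \(\alpha>1\) and apply Theorem~\ref{thm:bilateral}(i) at \(z\). This yields \(\varphi\in\cG_{\cF}\), anchored at \(u\) through Theorem~\ref{thm:local} and Proposition~\ref{prop:regimes}(i), and \(\ell>0\) with \(P=P_{\alpha,\ell,\varphi}\) satisfying \(P(z)>1\) and \(P<1\) on \(R\); the latter holds because \(\{P\geq1\}\subset X\setminus R\). Write \(c=(\alpha-1)/\ell>0\). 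Then \(P(z)>1\) means \(c(1-\varepsilon)\lambda>1\). Set \(p=P/(c\lambda)=P_{\alpha,c\lambda\ell,\varphi}\in\cP_{\cF}^{\nc}\). This gives \(p(tu)=t/\lambda\), so \(p(y)=s/\lambda>1\). On \(R\) we get \(p<1/(c\lambda)<1-\varepsilon\), hence \(\sup_Rp\leq1-\varepsilon\).

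For (ii), put \(z=(1+\varepsilon)\lambda u\). Since \((1+\varepsilon)\lambda<\rho_D(u)\), Proposition~\ref{prop:profile} gives \(z\notin D\), and \(z\neq0_X\). Theorem~\ref{thm:bilateral}(ii) at \(z\) supplies \(0\leq\alpha<1\), \(\ell>0\) and \(\varphi\). Its proof chooses \(C\subset\cone B(u,\eta)\) with selected generator anchored at \(u\), so \(\varphi(u)=1\). The resulting \(P=P_{\alpha,\ell,\varphi}\) satisfies \(P(z)>-1\) and \(P<-1\) on \(D\). With \(c=(1-\alpha)/\ell>0\) we have \(P(tu)=-ct\), so \(c(1+\varepsilon)\lambda<1\). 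Rescale to \(p=P/(c\lambda)=P_{\alpha,c\lambda\ell,\varphi}\). Then \(p(tu)=-t/\lambda\), so \(p(y)=-s/\lambda>-1\), and on \(D\) we get \(p<-1/(c\lambda)<-(1+\varepsilon)\).

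In the convex case, Theorem~\ref{thm:bilateral} permits convex witnesses with canonical superlinear generators. Positive rescaling preserves superlinearity, so both separators are superlinear. The main point requiring care is the anchoring claim \(\varphi(u)=1\). It is not part of the statement of Theorem~\ref{thm:bilateral}, so I would verify it by inspecting the two constructions. In (i), Theorem~\ref{thm:local} is applied with \(y=su\) and a localized pair \((C,u)\), and the corrected cones of Proposition~\ref{prop:regimes}(i) keep the same anchor. In (ii), the pair is chosen at \(u\) explicitly. Once this is granted, the rest is the scaling computation above.
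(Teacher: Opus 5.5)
Your proposal is correct and follows essentially the paper's proof. You apply Theorem~\ref{thm:bilateral} at the auxiliary points $(1-\varepsilon)\lambda u$ and $(1+\varepsilon)\lambda u$, then rescale positively, which only changes $\ell$. The anchoring check $\varphi(u)=1$ is less delicate than you suggest, because positive homogeneity gives $q(tu)=tq(u)$ and only the sign of $q(u)$ matters: in (i) the paper obtains $q(u)>0$ directly from $q((1-\varepsilon)\lambda u)>1$, and in (ii), where the paper does read off $\varphi(u)=1$ from the construction as you do, $q(u)<0$ also follows from \eqref{eq:coercive-negative}.
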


\begin{proof}
For \textup{(i)}, apply Theorem~\ref{thm:bilateral}(i) with the
prescribed \(\alpha>1\) at \((1-\varepsilon)\lambda u\), which lies
outside \(R\). The resulting \(q\) satisfies
\((1-\varepsilon)\lambda q(u)>1\) and \(\sup_Rq\leq1\).
In particular \(q(u)>0\). Set \(p=q/(\lambda q(u))\).
Then \(p(tu)=t/\lambda\) and
\(\sup_Rp\leq1/(\lambda q(u))<1-\varepsilon\).
Since \(s>\lambda\), one also has \(p(y)>1\).

For \textup{(ii)}, apply the construction in the proof of
Theorem~\ref{thm:bilateral}(ii) at \((1+\varepsilon)\lambda u\).
This point lies outside \(D\). It gives
\(q=P_{\alpha,\ell,\varphi}\), with \(0\leq\alpha<1\),
\(\varphi(u)=1\), and
\(-1/((1+\varepsilon)\lambda)<q(u)=(\alpha-1)/\ell<0\).
Also \(\sup_Dq\leq-1\).
The positive rescaling \(p=q/(-\lambda q(u))\) gives
\(p(tu)=-t/\lambda\) and
\(\sup_Dp\leq1/(\lambda q(u))<-1-\varepsilon\).
Since \(s<\lambda\), \(p(y)>-1\).
Positive rescaling changes only \(\ell\), preserving the parameter
regime and, when applicable, superlinearity.
\end{proof}

The preceding radial calibration allows us to characterize
exactly the levels at which strict separation can be achieved
with normalized separators.

\begin{corollary}\label{cor:radial-levels}
Let $\cF$ be a directionally fine family of 
closed cones in $X$, and let $\cP_{\cF}^{\nc}$ be its
associated norm-corrected family.
Let $R\subset X$ be closed and radiant,
$D\subset X$ closed and co-radiant, and $u\in S_X$.
Define the sets of admissible radial levels by
\begin{align*}
 \Lambda_R^{\cF}(u)
 &:=
 \{\lambda>0:\exists p\in\cP_{\cF}^{\nc},\ p(\lambda u)=1,\ \sup_Rp<1\},\\
 \Lambda_D^{\cF}(u)
 &:=
 \{\lambda>0:\exists p\in\cP_{\cF}^{\nc},\ p(\lambda u)=-1,\ \sup_Dp<-1\}.
\end{align*}
Then
\begin{equation}\label{eq:admissible-radial-levels}
 \Lambda_R^{\cF}(u)=(\sigma_R(u),+\infty),
 \qquad
 \Lambda_D^{\cF}(u)=(0,\rho_D(u)).
\end{equation}
Consequently,
\[
 \sigma_R(u)=\inf\Lambda_R^{\cF}(u),
 \qquad
 \rho_D(u)=\sup\Lambda_D^{\cF}(u),
\]
with $\inf\varnothing=+\infty$.

The first identity remains valid when the
separators are restricted to
$p=P_{\alpha,\ell,\varphi}$ with any prescribed
$\alpha>1$. The second identity remains valid
when they are restricted to
$p=P_{\alpha,\ell,\varphi}$ with $0\leq\alpha<1$.
\end{corollary}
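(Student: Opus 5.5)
Each identity in \eqref{eq:admissible-radial-levels} splits into two inclusions. One inclusion is an obstruction coming from the radial profile, and the other comes from Corollary~\ref{cor:radial-calibration}. Once the identities hold, the formulas for $\sigma_R(u)$ and $\rho_D(u)$ follow at once, using the convention $\inf\varnothing=+\infty$. If $\sigma_R(u)=+\infty$, the set $\Lambda_R^{\cF}(u)$ should be empty. If $\rho_D(u)=+\infty$, then $\Lambda_D^{\cF}(u)=(0,+\infty)$, whose supremum is $+\infty$.

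For the inclusion $\Lambda_R^{\cF}(u)\subset(\sigma_R(u),+\infty)$, I argue by contradiction. Suppose $\lambda\leq\sigma_R(u)$ and $p$ satisfies $p(\lambda u)=1$. By Definition~\ref{def:profile}, closedness of $R$ gives $\lambda u\in R$, since every finite radius $t\leq\sigma_R(u)$ on the ray lies in $R$. This also covers $\sigma_R(u)=+\infty$. Hence $\sup_Rp\geq1$, a contradiction.

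Similarly, for $\Lambda_D^{\cF}(u)\subset(0,\rho_D(u))$, suppose $\lambda\geq\rho_D(u)$ with $\rho_D(u)$ finite. By Proposition~\ref{prop:profile}, $\lambda u\in D$, so $\sup_Dp\geq p(\lambda u)=-1$, which contradicts $\sup_Dp<-1$. This argument uses no special structure of $p$, so it holds for every restricted subclass as well.

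For the reverse inclusions I use Corollary~\ref{cor:radial-calibration}, whose separator satisfies $p(tu)=t/\lambda$, so that $p(\lambda u)=1$ exactly.

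Given $\lambda>\sigma_R(u)$, I choose $\varepsilon\in(0,1)$ with $\sigma_R(u)<(1-\varepsilon)\lambda$. If $\sigma_R(u)=0$, any $\varepsilon$ works. I also choose any $s>\lambda$, set $y=su$, and apply part~(i) with a prescribed $\alpha>1$. This gives $p(\lambda u)=1$ and $\sup_Rp\leq1-\varepsilon<1$.

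Given $0<\lambda<\rho_D(u)$, I choose $\varepsilon>0$ with $(1+\varepsilon)\lambda<\rho_D(u)$, which is possible also when $\rho_D(u)=+\infty$. I then take $0<s<\lambda$ and apply part~(ii). This gives $p(\lambda u)=-1$ and $\sup_Dp\leq-1-\varepsilon<-1$, with $0\leq\alpha<1$.

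The restricted versions follow because the calibration corollary produces separators in exactly these parameter regimes: the prescribed $\alpha>1$ in part~(i), and $0\leq\alpha<1$ in part~(ii).

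There is no real obstacle. The only delicate points are the endpoint cases. These are handled by the attainment of finite thresholds for closed $D$ in Proposition~\ref{prop:profile} and by the description of closed $R$ along rays in Definition~\ref{def:profile}. Both ensure that the endpoint itself is excluded from the admissible sets, so the intervals are open.
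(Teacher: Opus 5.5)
Your proposal is correct and follows the paper's proof essentially step by step. The endpoint obstructions come from closedness of $R$ along rays and from attainment of finite thresholds of $\rho_D$, and the reverse inclusions come from Corollary~\ref{cor:radial-calibration} with the same choices of $\varepsilon$ and $s$, which also give the restricted-$\alpha$ versions; your explicit handling of the infinite endpoints is a small clarification that the paper leaves implicit.
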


\begin{proof}
For the radiant identity, if \(0<\lambda\leq\sigma_R(u)\),
closedness and radiancy give \(\lambda u\in R\);
the defining conditions of \(\Lambda_R^{\cF}(u)\) are impossible.
Conversely, for \(\lambda>\sigma_R(u)\), choose \(0<\varepsilon<1\)
with \(\sigma_R(u)<(1-\varepsilon)\lambda\) and take any \(s>\lambda\).
Corollary~\ref{cor:radial-calibration}(i) supplies the required
separator, with any prescribed \(\alpha>1\).

For the co-radiant identity, \(\lambda\geq\rho_D(u)\) implies
\(\lambda u\in D\), again excluding an admissible separator.
If \(0<\lambda<\rho_D(u)\), choose \(\varepsilon>0\) with
\((1+\varepsilon)\lambda<\rho_D(u)\) and any \(0<s<\lambda\).
Part~\textup{(ii)} of the same corollary gives a separator with
\(0\leq\alpha<1\).
Taking the endpoints of the two intervals proves the remaining claims.
\end{proof}

We recall the Minkowski gauge and cogauge,
which are standard tools in the study of radiant
and co-radiant sets; see
\cite[Definitions~2.3 and~2.5]{Zaffaroni2007}
and \cite[Section~3]{Zaffaroni2022}.

\begin{definition}\label{def:radial-gauges}
For closed radiant $R$ and closed co-radiant $D$,
their Minkowski gauge and cogauge are
\[
 \mu_R(x):=\inf\{t>0:x\in tR\},\qquad
 \nu_D(x):=\sup\{t>0:x\in tD\},
\]
with $\inf\varnothing=+\infty$ and
$\sup\varnothing=0$.
\end{definition}

Under our standing assumptions, both functions
vanish at $0_X$. Their values along each ray
are determined by the radial profiles:
\[
 \mu_R(tu)=\frac{t}{\sigma_R(u)},\qquad
 \nu_D(tu)=\frac{t}{\rho_D(u)},
 \qquad \forall t>0,\ \forall u\in S_X,
\]
where $1/0=+\infty$ and $1/(+\infty)=0$.

In particular,  $\rho_D$ is the reciprocal of the cogauge
on $S_X$.

\medskip

The characterization of admissible radial levels
in Corollary~\ref{cor:radial-levels} yields the following
representations of the Minkowski gauge and cogauge
in terms of strict norm-corrected separators.

\begin{corollary}\label{cor:strict-gauge-representations}
Let $\cF$ be a directionally fine family of
closed cones in $X$, and let $\cP_{\cF}^{\nc}$
be its associated norm-corrected family.
Let $R$ be closed and radiant and $D$ closed and
co-radiant.
Define
\begin{align*}
 \mathcal S_R^{\cF}
 &:=\{p\in\cP_{\cF}^{\nc}:\sup_Rp<1\},\\
 \mathcal S_D^{\cF}
 &:=\{p\in\cP_{\cF}^{\nc}:\sup_Dp<-1,\ p=P_{\alpha,\ell,\varphi}
 \text{ for some }0\leq\alpha<1\}.
\end{align*}
Then, for every $x\in X$,
\begin{equation}\label{eq:strict-gauge-representations}
 \mu_R(x)=\sup_{p\in\mathcal S_R^{\cF}}p(x),
 \qquad
 \nu_D(x)=-\sup_{p\in\mathcal S_D^{\cF}}p(x).
\end{equation}
\end{corollary}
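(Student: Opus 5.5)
The plan is to prove each identity by a two-sided inequality along rays. Write \(x=tu\) with \(t>0\) and \(u\in S_X\), and treat \(x=0_X\) separately. At \(0_X\), every \(p\in\cP_{\cF}^{\nc}\) vanishes by positive homogeneity, and so do \(\mu_R\) and \(\nu_D\). To make the suprema equal to \(0\), I need the index sets to be nonempty. The set \(\mathcal S_D^{\cF}\) contains \(-\|\cdot\|/\ell\) with \(0<\ell<d_D\), by Theorem~\ref{thm:bilateral}(ii). The set \(\mathcal S_R^{\cF}\) is nonempty when \(R\neq X\): apply Corollary~\ref{cor:radial-calibration}(i) at some direction with finite \(\sigma_R\). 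If \(\sigma_R\equiv+\infty\) on every ray, then the closed radiant \(R\) would be \(X\), which the standing properness assumption excludes. Since every \(p\in\mathcal S_R^{\cF}\) satisfies \(p(tu)\ge 0\)? Not necessarily, so the value at \(0_X\) is simply \(0=\sup p(0_X)\) over a nonempty set.

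\emph{Gauge identity, upper bound.} Fix \(x=tu\) and \(p\in\mathcal S_R^{\cF}\). If \(\sigma_R(u)>0\), then for every finite \(0<\tau\le\sigma_R(u)\) we have \(\tau u\in R\), hence \(\tau p(u)<1\). This gives \(p(u)\le 1/\sigma_R(u)\), and therefore \(p(x)\le t/\sigma_R(u)=\mu_R(x)\) (with \(p(u)\le 0\) when \(\sigma_R(u)=+\infty\)). If \(\sigma_R(u)=0\), then \(\mu_R(x)=+\infty\) and there is nothing to prove.

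\emph{Gauge identity, lower bound.} For any \(\lambda>\sigma_R(u)\), Corollary~\ref{cor:radial-calibration}(i) (with some \(s>\lambda\), \(\varepsilon\) small, and \(\alpha>1\) fixed) supplies \(p\in\mathcal S_R^{\cF}\) with \(p(tu)=t/\lambda\). Letting \(\lambda\downarrow\sigma_R(u)\) gives \(\sup p(x)\ge t/\sigma_R(u)\), which equals \(+\infty\) when \(\sigma_R(u)=0\). When \(\sigma_R(u)=+\infty\), the bound \(\sup\ge0\) is needed. I expect the lower bound in this degenerate direction to be the main obstacle: for \(p\in\mathcal S_R^{\cF}\) the value \(p(u)\) may be negative. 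I will handle it by choosing separators whose sign along \(u\) is nonnegative. Fix a calibrated \(q\) (at a direction \(w\) with finite \(\sigma_R(w)\)) and scale \(q\) down: \(cq\in\mathcal S_R^{\cF}\) for \(c\in(0,1]\), because \(\sup_R cq\le c\sup_R q<1\) when \(\sup_R q\ge0\), and this holds since \(q(0_X)=0\). Then \(cq(x)\to0\) as \(c\to0\), which yields \(\sup\ge 0\).

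\emph{Cogauge identity.} This is symmetric, using Corollary~\ref{cor:radial-calibration}(ii). For \(p=P_{\alpha,\ell,\varphi}\in\mathcal S_D^{\cF}\), if \(\rho_D(u)<\infty\) then \(\rho_D(u)u\in D\), so \(\rho_D(u)p(u)<-1\). Hence \(-p(x)>t/\rho_D(u)=\nu_D(x)\), giving \(-\sup p(x)\ge \nu_D(x)\). If \(\rho_D(u)=+\infty\), the bound is trivial since \(\nu_D(x)=0\) and \(p\le0\) in the regime \(\alpha<1\) by \eqref{eq:coercive-negative}. Conversely, for \(0<\lambda<\rho_D(u)\), the calibration produces \(p\in\mathcal S_D^{\cF}\) with \(p(tu)=-t/\lambda\). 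Letting \(\lambda\uparrow\rho_D(u)\) gives \(-\sup p(x)\le t/\rho_D(u)\), which is \(0\) in the infinite case. Because \(\rho_D(u)\ge d_D>0\), no \(+\infty\) issues arise on the cogauge side.
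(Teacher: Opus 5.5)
Your proposal is correct and follows essentially the same route as the paper. The upper bounds $p\le\mu_R$ and $p\le-\nu_D$ come from evaluating along rays, the radial calibration supplies separators with $p(tu)=t/\lambda$ or $p(tu)=-t/\lambda$, and the limits $\lambda\downarrow\sigma_R(u)$ and $\lambda\uparrow\rho_D(u)$ give equality. The only difference is cosmetic: for the degenerate case $\sigma_R(u)=+\infty$ and for nonemptiness of $\mathcal S_R^{\cF}$, the paper simply uses $-\|\cdot\|/\ell\in\mathcal S_R^{\cF}$ with $\ell\to+\infty$, which avoids your detour through a calibrated $q$ at a direction with finite $\sigma_R$.
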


\begin{proof}
For the gauge identity, if \(p\in\mathcal S_R^{\cF}\) and
\(x\in aR\), \(a>0\), then \(p(x)\leq a\).
Taking the infimum gives \(p\leq\mu_R\).
For \(x=tu\ne0_X\), Corollary~\ref{cor:radial-levels} supplies
the values \(p(x)=t/\lambda\) for every \(\lambda>\sigma_R(u)\).
Letting \(\lambda\downarrow\sigma_R(u)\) proves equality when
\(\sigma_R(u)<+\infty\), including \(\sigma_R(u)=0\).
If \(\sigma_R(u)=+\infty\), then \(\mu_R(x)=0\);
the functions \(-\|\cdot\|/\ell\in\mathcal S_R^{\cF}\),
as \(\ell\to+\infty\), give supremum zero.

For the cogauge identity, each \(p\in\mathcal S_D^{\cF}\)
is nonpositive by Proposition~\ref{prop:regimes}.
If \(\rho_D(u)<+\infty\), then
\(p(\rho_D(u)u)<-1\), whence \(p(tu)\leq-t/\rho_D(u)\).
If \(\rho_D(u)=+\infty\), nonpositivity gives the same bound,
with right side zero. Thus \(p\leq-\nu_D\).
Corollary~\ref{cor:radial-levels} supplies
\(p(tu)=-t/\lambda\) for every \(0<\lambda<\rho_D(u)\).
Letting \(\lambda\uparrow\rho_D(u)\), including an infinite endpoint,
proves equality. At zero, all functions vanish.
\end{proof}

We now specialize the norm-corrected
construction to the axial family, obtaining
explicit formulas for the corresponding
functions and cones.

\begin{definition}\label{def:ncuad}
For a norming pair $(u,f)$, $\delta>0$,
$\alpha\geq0$, and $\ell>0$,
the \emph{norm-corrected axial function}
is defined by
\begin{equation}\label{eq:ncuad}
 P_{\alpha,\ell,u,f,\delta}(x)
 :=
 \frac{\alpha\varphi_{u,f,\delta}(x)-\|x\|}{\ell}
 =
 \frac{\alpha f(x)-(\alpha/\delta)\|Q_{u,f}x\|-\|x\|}{\ell},
 \qquad \forall x\in X.
\end{equation}
The family of norm-corrected axial functions is
\[
 \cP_{\AX}^{\nc}
 :=
 \left\{
 P_{\alpha,\ell,u,f,\delta}:
 (u,f)\text{ is a norming pair},
 \ \alpha\geq0,\ \ell>0,\ \delta>0
 \right\}.
\]
For every $\alpha\geq0$, the associated
\emph{norm-corrected axial cone} is defined by
\[
 C^{\nc}_{\alpha,u,f,\delta}
 :=
 \{x\in X:\alpha\varphi_{u,f,\delta}(x)-\|x\|\geq0\}
 .
\]
\end{definition}

Since $\ell>0$, we have $ C^{\nc}_{\alpha,u,f,\delta}
 =\{P_{\alpha,\ell,u,f,\delta}\geq0\}$,  so the norm-corrected axial cone is independent
of $\ell$.

Norm-corrected axial functions are Lipschitz and superlinear,
and the parameter regimes of Proposition~\ref{prop:regimes} apply.
The following remark describes the geometry
of the associated cones in the three parameter regimes.

\begin{remark}\label{rem:corrected-axial-cones}
The cones \(C^{\nc}_{\alpha,u,f,\delta}\) are closed and
convex for all \(\alpha\geq0\).
For \(0\leq\alpha<1\), they reduce to \(\{0_X\}\);
the negative superlevels of the associated functions
give the bounded convex radiant neighbourhoods used
for co-radiant separation.

For \(\alpha=1\), the inequalities
\(\varphi_{u,f,\delta}\leq f\leq\|\cdot\|\)
force \(Q_{u,f}x=0_X\) and \(f(x)\geq0\)
on the zero superlevel, so
\(C^{\nc}_{1,u,f,\delta}=\Rp u\).

For \(\alpha>1\), the defining numerator equals
\(\alpha-1>0\) at \(u\),
so
\(u\in\operatorname{int}C^{\nc}_{\alpha,u,f,\delta}\).
\end{remark}

We now specialize the positive-level
separation result to the norm-corrected axial
family, obtaining explicit separating functions
and parameter bounds.

\begin{corollary}\label{cor:uad-corrected-positive}
Let $\cF_{\AX}$ be the convex directionally
fine family of axial cones in $X$, with associated
axial functions $\cG_{\AX}$ and norm-corrected
axial functions $\cP_{\AX}^{\nc}$. Let \(R\) be closed and radiant, \(y=su\notin R\),
with \(s>0\) and \(u\in S_X\). Choose
\[
 0<\theta<\min\{\dist(y,R),s\},\qquad
 s-\theta<\lambda<s,\qquad
 0<\delta<\min\left\{
 1,\frac{\theta-s+\lambda}{2\lambda}
 \right\}.
\]
For any norming pair \((u,f)\) and
any \(\alpha>1\), set
\(\ell:=(\alpha-1)\lambda\) and
\(P:=P_{\alpha,\ell,u,f,\delta}
\in\cP_{\AX}^{\nc}\).
Then
\[
 y\in\{P>1\}
 \subset\{P\geq1\}
 \subset X\setminus R.
\]
The strict and weak superlevels are
co-radiant neighbourhoods of \(y\).
\end{corollary}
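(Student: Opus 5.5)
The plan is to reduce the statement to Corollary~\ref{cor:uad-positive} applied to the corrected cone, or more directly, to repeat its argument for the function \(H:=(\alpha\varphi-\|\cdot\|)/(\alpha-1)\), where \(\varphi=\varphi_{u,f,\delta}\). With \(\ell=(\alpha-1)\lambda\) one has \(P=H/\lambda\). So it suffices to prove
\[
y\in\{H>\lambda\}\subset\{H\geq\lambda\}\subset\sdw B(y,\theta),
\]
and then use that \(\sdw B(y,\theta)\) avoids \(R\). That last fact holds by radiancy, since \(B(y,\theta)\cap R=\varnothing\) because \(\theta<\dist(y,R)\).

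\emph{First inclusion.} Since \(\varphi(y)=s\) and \(\|y\|=s\), we get \(H(y)=(\alpha s-s)/(\alpha-1)=s>\lambda\), which places \(y\) in \(\{H>\lambda\}\).

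\emph{Second inclusion.} Suppose \(H(x)\geq\lambda\). By Proposition~\ref{prop:regimes}(i), \(H\) is adapted to the cone \(C^{\nc}_{\alpha,u,f,\delta}\) at \(u\), so \(H\leq\|\cdot\|\) and \(\|x\|\geq\lambda>0\). Also \(H(x)\geq0\) forces \(\alpha\varphi(x)\geq\|x\|>0\), hence \(\varphi(x)>0\) and \(x\in C_{u,f,\delta}\). Proposition~\ref{prop:uad} then gives \(C_{u,f,\delta}\subset\cone B(u,\eta)\) for any \(\eta\in(\delta,1)\). Letting \(\eta\downarrow\delta\) in Lemma~\ref{lem:directions}, or arguing directly via \(v/f(v)\in B(u,\delta)\) as in Lemma~\ref{lem:directions}'s normalization estimate, we get \(\|v-u\|\leq2\delta\) for \(v=x/\|x\|\). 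The computation of Corollary~\ref{cor:uad-positive} then yields
\[
\|\lambda v-y\|\leq2\lambda\delta+s-\lambda<\theta,
\]
where the strict inequality uses \(\delta<(\theta-s+\lambda)/(2\lambda)\). Consequently \(x=(\|x\|/\lambda)(\lambda v)\in\sdw B(y,\theta)\).

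\emph{Main obstacle.} The only delicate point is that the level bound \(\|x\|\geq\lambda\) must come from \(H\) rather than \(\varphi\). This needs the adaptedness inequality \(H\leq\|\cdot\|\), which follows from \(\varphi\leq\|\cdot\|\): we have \(\alpha\varphi(x)-\|x\|\leq(\alpha-1)\|x\|\). The direction estimate, by contrast, uses only the original axial cone.

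\emph{Remaining assertions.} \(P\) is continuous and positively homogeneous by Proposition~\ref{prop:regimes}. Hence the strict superlevel is open and the weak one is closed, both are co-radiant, and both contain the open set \(\{P>1\}\ni y\), so they are neighbourhoods of \(y\).
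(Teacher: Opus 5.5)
Your proposal is correct and follows essentially the same route as the paper: it sets $H=(\alpha\varphi_{u,f,\delta}-\|\cdot\|)/(\alpha-1)$ so that $P=H/\lambda$, uses adaptedness ($H\leq\|\cdot\|$) to get $\|x\|\geq\lambda$, applies the direction estimate $\|v-u\|\leq2\delta$ on the original axial cone to land in $\sdw B(y,\theta)$, and then invokes radiancy. Your limiting argument $\eta\downarrow\delta$ (or the direct bound $x/f(x)\in B(u,\delta)$) is a slightly more careful justification of the inclusion $C_{u,f,\delta}\subset\cone B(u,\delta)$, which the paper uses directly.
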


\begin{proof}
Put \(H=(\alpha\varphi_{u,f,\delta}-\|\cdot\|)/(\alpha-1)\).
Proposition~\ref{prop:regimes} shows that \(H\) is adapted to a
cone contained in \(C_{u,f,\delta}\subset\cone B(u,\delta)\).
Also \(H(y)=s>\lambda\).
If \(H(x)\geq\lambda\), adaptation gives \(\|x\|\geq\lambda\).
For \(v=x/\|x\|\), Lemma~\ref{lem:directions} yields
\(\|v-u\|\leq2\delta\), so
\(\|\lambda v-y\|\leq2\lambda\delta+s-\lambda<\theta\).
Hence \(x\in\sdw B(y,\theta)\), which avoids \(R\) by radiancy.
Since \(P=H/\lambda\), this proves all inclusions.
Continuity and homogeneity give the neighbourhood assertions.
\end{proof}
Unlike the positive-level construction, the following co-radiant separation result does not require a small axial aperture. Instead, it uses the distance from the segment \([0_X,y]\) to the closed co-radiant set \(D\).

\begin{corollary}\label{cor:uad-bilateral}
Let $\cF_{\AX}$ be the convex directionally
fine family of axial cones in $X$, with associated
axial functions $\cG_{\AX}$ and norm-corrected
axial functions $\cP_{\AX}^{\nc}$.  Let $D\subset X$ be closed and co-radiant,
and let $y\notin D$.
If $y\neq0_X$, write $y=su$, with $s>0$
and $u\in S_X$, and put
\[
d_0:=\dist([0_X,y],D),\qquad
h:=d_0/4,\qquad r:=s+h.
\]
For any $\delta>0$ and any norming
pair $(u,f)$, set
\[
\ell:=\frac{h}{2(1+\delta)},\qquad
\alpha:=1-\frac{\ell}{r},\qquad
P:=P_{\alpha,\ell,u,f,\delta}.
\]
Then $0<\alpha<1$ and
$P\in\cP_{\AX}^{\nc}$.

If $y=0_X$, choose $0<\ell<d_D$
and define
\[
P(x):=-\frac{\|x\|}{\ell},
\qquad \forall x\in X.
\]
This function also belongs to $\cP_{\AX}^{\nc}$,
by taking $\alpha=0$.

In either case, the resulting
function $P$ satisfies
\[
y\in\{P>-1\}
\subset\{P\geq-1\}
\subset X\setminus D.
\]

The strict superlevel is open,
and the weak superlevel is closed.
Both are bounded convex radiant neighbourhoods
of $0_X$ and $y$.
\end{corollary}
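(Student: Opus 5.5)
The plan is to follow the proof of Theorem~\ref{thm:bilateral}(ii), but to replace the narrow-aperture argument by explicit use of the axial decomposition \(x=f(x)u+Q_{u,f}x\). That decomposition lets an arbitrary \(\delta>0\) work.

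\emph{Trivial case and parameter checks.} For \(y=0_X\), \(P=-\|\cdot\|/\ell\) and \(\{P\geq-1\}=\ell B_X\), which misses \(D\) because \(\ell<d_D=\dist(0_X,D)\) by Proposition~\ref{prop:profile}. For \(y=su\neq0_X\), \eqref{eq:segment-gap} gives \(d_0>0\), so \(h>0\) and \(\ell>0\). Since \(\ell<h/2<r\), we get \(0<\alpha<1\). In fact \(\ell/r<1/2\), because \(r>h\), and therefore \(\alpha>1/2\). By \eqref{eq:uad-shift}, \(\varphi_{u,f,\delta}(y)=s\), so \(P(y)=(\alpha-1)s/\ell=-s/r>-1\). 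The regularity, boundedness, radiancy and convexity assertions then follow from Proposition~\ref{prop:regimes}(ii), since axial functions are superlinear by Proposition~\ref{prop:uad}.

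\emph{Main step: \(\{P\geq-1\}\cap D=\varnothing\).} Let \(P(x)\geq-1\) and set \(t=f(x)\). The defining inequality reads
\[
\|x\|-\alpha t+(\alpha/\delta)\|Q_{u,f}x\|\leq\ell .
\]
Since \(\varphi_{u,f,\delta}\leq\|\cdot\|\), it first yields \(\|x\|\leq\ell/(1-\alpha)=r\). Using \(\|x\|\geq|t|\), because \(\|f\|=1\), we split on the sign of \(t\).

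If \(t<0\), then \(\|x\|\leq\ell+\alpha t<\ell<d_0\leq\dist(0_X,D)\), so \(x\notin D\).

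If \(t\geq0\), then \(\|x\|-\alpha t\geq(1-\alpha)t\geq0\), hence \(\|x-tu\|=\|Q_{u,f}x\|\leq\delta\ell/\alpha\). Because \(\delta\ell=\delta h/(2(1+\delta))<h/2\) and \(\alpha>1/2\), this gives \(\|x-tu\|<h\). Moreover \(0\leq t\leq\|x\|\leq r=s+h\). When \(t\leq s\), the point \(tu\) lies on \([0_X,y]\), so \(\dist(x,[0_X,y])<h\). When \(s<t\leq r\), we have \(\|tu-y\|\leq h\), so \(\dist(x,[0_X,y])<2h\). In both cases \(\dist(x,[0_X,y])<2h<d_0\), whereas every point of \(D\) is at distance at least \(d_0\) from the segment. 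Hence \(x\notin D\).

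The main obstacle is the bound \(\|Q_{u,f}x\|<h\) uniformly in \(\delta\). It requires the sign split on \(f(x)\) to drop the term \(\|x\|-\alpha f(x)\), and the factor \(1/(1+\delta)\) in \(\ell\) to absorb \(\delta\).
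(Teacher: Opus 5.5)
Your proof is correct and follows essentially the same route as the paper: the radial bound $\|x\|\le r$, the transverse estimate $\|Q_{u,f}x\|\le\delta\ell/\alpha<h$ from $\alpha>1/2$ and $\delta\ell<h/2$, and comparison with a clamped point of $[0_X,y]$ at distance $<2h<d_0$. The only difference is that you dispose of $f(x)<0$ directly via $\|x\|<\ell<\dist(0_X,D)$, whereas the paper bounds $f(x)\ge-\ell/\alpha>-h$ and clamps at $0_X$. Also, contrary to your closing remark, no sign split is needed to drop $\|x\|-\alpha f(x)$, since $|f(x)|\le\|x\|$ makes it nonnegative for every $x$.
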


\begin{proof}
If $y=0_X$, choose $0<\ell<d_D$ and
$P=-\|\cdot\|/\ell$. Then
$\{P\geq-1\}=\ell B_X\subset X\setminus D$,
and $P(0_X)=0>-1$.

Suppose now that $y=su\neq0_X$.
Co-radiancy gives $[0_X,y]\cap D=\varnothing$,
and compactness of the segment and closedness of
$D$ yield $d_0>0$. The chosen parameters satisfy
$1/2<\alpha<1$ and $P(y)=-s/r>-1$.

Let $P(x)\geq-1$, and put $t=f(x)$ and
$b=\|Q_{u,f}x\|$.
Proposition~\ref{prop:regimes} gives
$\|x\|\leq r=s+h$. Moreover,
$t\geq-\ell/\alpha$ and
$(\alpha/\delta)b\leq\ell+\alpha t-\|x\|
\leq\ell$, since $\alpha<1$ and
$t\leq\|x\|$.
Consequently,
$-h/(1+\delta)\leq t\leq s+h$ and
$b\leq\delta h/(1+\delta)<h$.

Set $a=\min\{s,\max\{0,t\}\}$.
Then $au\in[0_X,y]$, $|t-a|\leq h$, and
$\|x-au\|\leq b+|t-a|<2h<d_0$.
Thus $x\notin D$, proving
\[
y\in\{P>-1\}\subset\{P\geq-1\}
\subset X\setminus D.
\]
In both cases, continuity, positive homogeneity,
and superlinearity give the stated topological,
radiant, and convexity properties. Boundedness
follows from Proposition~\ref{prop:regimes}.
\end{proof}

\begin{remark}\label{rem:axial-shape}
The axial extent of the superlevels at level
$-1$ can be computed explicitly.
Let $(u,f)$ be a norming pair, let
$\delta,\ell>0$, $0\leq\alpha<1$, and set
$P:=P_{\alpha,\ell,u,f,\delta}$ as in
\eqref{eq:ncuad}.
Since $\varphi_{u,f,\delta}(tu)=t$
for every $t\in\R$,
\[
P(tu)=\frac{\alpha t-|t|}{\ell},
\qquad
\{tu:t\in\R,\ P(tu)\geq-1\}
=
\left[-\frac{\ell}{1+\alpha},
      \frac{\ell}{1-\alpha}\right]u.
\]
Thus the superlevel extends a distance
$\ell/(1+\alpha)$
in the direction $-u$ and a distance
$\ell/(1-\alpha)$
in the direction $u$.
For fixed $\ell$, the positive axial
extent tends to $+\infty$ as $\alpha\uparrow1$.
\end{remark}

The following example illustrates this geometry
in the Euclidean plane.

\begin{example}
\label{ex:planar-ncuad-cap}
In Euclidean \(\R^2\), let \(D=\{x:\|x\|_2\geq1\}\) and
\(y=(0,1/2)\). Choose the norming pair \((e_2,f_2)\),
\(f_2(x)=x_2\), and parameters
\(\alpha=3/5\), \(\ell=7/25\), \(\delta=2/5\).
Then \(\varphi_{e_2,f_2,\delta}(x)=x_2-(5/2)|x_1|\), and
\(P=P_{3/5,7/25,e_2,f_2,2/5}\) has weak superlevel
\begin{equation}\label{eq:planar-corrected}
 \{P\geq-1\}
 =\left\{x:\|x\|_2+\frac32|x_1|-\frac35x_2\leq\frac7{25}\right\}.
\end{equation}
Since \(P(y)=-5/7>-1\) and
\(P(x)\leq-((1-\alpha)/\ell)\|x\|_2\),
\[
 y\in\{P>-1\}\subset\{P\geq-1\}
 \subset\frac7{10}B_X\subset X\setminus D.
\]
The cap is a bounded convex radiant neighbourhood of zero and \(y\).
\end{example}

\section{Evaluation polarity and primal bipolar identities}\label{sec:polarity}

In this section, we introduce functional classes generated by adapted generators and their norm corrections, and define the corresponding evaluation polarities. We establish bipolar identities that recover closed radiant and co-radiant hulls, derive formulas for their radial profiles, and obtain explicit axial versions of these results.

\medskip

We introduce three nested classes of
positively homogeneous functions. Starting from
the selected adapted generators, we consider
their nonnegative scalar multiples and the
closure of their finite nonnegative sums.
Allowing the additional generator
$-\|\cdot\|$ yields a larger class containing
the norm-corrected functions introduced in
Section~\ref{sec:bilateral}.

\begin{definition}\label{def:function-classes}
Let \(\cF\) be a directionally fine family of closed cones in
\(X\), with selected Lipschitz adapted generators \(\cG_{\cF}\) as in
Definition~\ref{def:selected-generators}. Let \(\cH(X)\) be the space of
continuous positively homogeneous real functions, with
\(\|p\|_{\cH}=\sup_{u\in S_X}|p(u)|\). Define
\begin{align}
 \mathcal A_{\cF}&=\cone(\cG_{\cF}),\notag\\
 \cK_{\cF}&=\cl_{\|\cdot\|_{\cH}}\co(\cone(\cG_{\cF}))
 =\cl_{\|\cdot\|_{\cH}}
 \left\{\sum_{j=1}^m a_j\varphi_j:
 m\in\mathbb N,\ a_j\geq0,\ \varphi_j\in\cG_{\cF}\right\},\notag\\
 \cM_{\cF}&=\cl_{\|\cdot\|_{\cH}}
 \co(\cone(\cG_{\cF}\cup\{-\|\cdot\|\}))\notag\\
 &=\cl_{\|\cdot\|_{\cH}}
 \left\{\sum_{j=1}^m a_j\varphi_j-b\|\cdot\|:
 m\in\mathbb N,\ a_j,b\geq0,\ \varphi_j\in\cG_{\cF}\right\}.
 \label{eq:classes}
\end{align}
All closures are taken with respect to \(\|\cdot\|_{\cH}\),
equivalently, to uniform convergence on \(S_X\). 
Notice that $\mathcal A_{\cF}$ consists of
nonnegative scalar multiples of individual
generators and need not be convex or closed,
whereas $\cK_{\cF}$ and $\cM_{\cF}$ are
closed convex cones.

\end{definition}

Continuity at zero gives $r>0$ with $|p(x)|\leq1$
for $\|x\|\leq r$; homogeneity then gives
$\|p\|_{\cH}\leq1/r$.
Evaluation $e_x(p)=p(x)$ is linear and continuous,
with $|p(x)|\leq\|p\|_{\cH}\|x\|$.

If every generator in $\cG_{\cF}$ is
superlinear, then every function in
$\mathcal A_{\cF}$, $\cK_{\cF}$, and $\cM_{\cF}$
is superlinear, since $-\|\cdot\|$ is
superlinear and superlinearity is preserved
under nonnegative linear combinations
and uniform limits on $S_X$.

The following inclusion links the
norm-corrected functions of Section~\ref{sec:bilateral}
with the generated functional classes and will
be used in the subsequent bipolar identities.

\begin{proposition}\label{prop:norm-corrected-inclusion}

Let $\cF$ be a directionally fine family
of closed cones in $X$, with selected
Lipschitz adapted generators $\cG_{\cF}$.
Let $\cP_{\cF}^{\nc}$ and $\cM_{\cF}$ be the
associated functional classes defined in
Definitions~\ref{def:norm-corrected}
and~\ref{def:function-classes}, respectively.
Then
\[
\cP_{\cF}^{\nc}
\subset
\co(\cone(\cG_{\cF}\cup\{-\|\cdot\|\}))
\subset\cM_{\cF}.
\]
\end{proposition}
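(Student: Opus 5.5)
The first inclusion is a direct rewriting. Fix \(P=P_{\alpha,\ell,\varphi}\in\cP_{\cF}^{\nc}\) with \(\varphi\in\cG_{\cF}\), \(\alpha\geq0\) and \(\ell>0\). By Definition~\ref{def:norm-corrected},
\[
 P=\frac{\alpha}{\ell}\,\varphi+\frac1\ell\,\bigl(-\|\cdot\|\bigr),
\]
with nonnegative coefficients \(a_1=\alpha/\ell\) and \(b=1/\ell\). I will read \(\co(\cone(\cG_{\cF}\cup\{-\|\cdot\|\}))\) as the set of finite nonnegative combinations displayed in \eqref{eq:classes}. Indeed, a convex combination of nonnegative multiples of generators and of \(-\|\cdot\|\) is again such a combination. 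Conversely, any finite sum \(\sum_j a_j\varphi_j-b\|\cdot\|\) with at least one positive coefficient equals \(N\) times a convex combination of the elements \((Na_j)\varphi_j\) and \(Nb(-\|\cdot\|)\) of the cone, where \(N\) is the number of terms. Since the cone contains all nonnegative multiples, the factor \(N\) can be absorbed into those multiples. Hence \(P\) belongs to the middle set, with \(m=1\).

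The only point needing care is the case \(\alpha=0\). Then the generator term vanishes and \(P=-\|\cdot\|/\ell\), which is simply the nonnegative multiple \(1/\ell\) of \(-\|\cdot\|\). It lies in the cone, and hence in its convex hull. The representation above allows \(a_1=0\), so this case is already covered.

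The second inclusion is immediate: every set is contained in its closure. One must also check that the elements are in \(\cH(X)\), so that \(\cM_{\cF}\) is the closure inside that space. Each \(\varphi\in\cG_{\cF}\) is Lipschitz and positively homogeneous, and so is the norm. Therefore finite nonnegative combinations of them are continuous and positively homogeneous, that is, they belong to \(\cH(X)\). Taking the \(\|\cdot\|_{\cH}\)-closure then gives the containment.

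I expect no real obstacle. The only subtlety is the bookkeeping identification of \(\co(\cone(\cdot))\) with nonnegative finite sums, which the paper itself already records in \eqref{eq:classes}.
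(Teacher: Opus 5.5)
Your proof is correct and follows the paper's argument exactly: write $P_{\alpha,\ell,\varphi}=(\alpha/\ell)\varphi+(1/\ell)(-\|\cdot\|)$ with nonnegative coefficients to place it in the convex conic hull, then take the $\|\cdot\|_{\cH}$-closure. One cosmetic slip: a nonnegative combination $\sum_{k=1}^N c_kg_k$ is itself the equal-weight convex combination $\sum_{k=1}^N\frac1N(Nc_kg_k)$ of cone elements, not $N$ times it, and only this direction is needed for the inclusion.
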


\begin{proof}
For every member of the norm-corrected family,
\(P_{\alpha,\ell,\varphi}=(\alpha/\ell)\varphi+
(1/\ell)(-\|\cdot\|)\).
Both coefficients are nonnegative, so this function belongs to the
convex conic hull of the enlarged generator family.
Taking its norm closure gives the second inclusion.
\end{proof}

The following example shows that enlarging
$\cK_{\cF}$ by the negative norm generator is
essential for negative-level separation, even
in dimension one.

\begin{remark}\label{rem:conic-hull-missing-sign}

In $X=\R$, consider the convex
directionally fine family
$\cF=\{\R_+,-\R_+\}$, with selected
adapted generators $\varphi_+(x)=x$ and
$\varphi_-(x)=-x$.
Then $\cK_{\cF}$ consists precisely of
the linear functions $p(x)=ax$, $a\in\R$,
so $p(1)+p(-1)=0$ for every
$p\in\cK_{\cF}$.

The inequalities $p(1)\leq-1$ and
$p(-1)\leq-1$ would give $0\leq-2$.

Consequently, $\cK_{\cF}$ cannot
separate $0$ from the closed co-radiant set
$D:=\{x\in\R:|x|\geq1\}$ at level $-1$.
By contrast, for $0<\ell<1$, the
norm-corrected function
$P(x):=-|x|/\ell\in\cP_{\cF}^{\nc}
\subset\cM_{\cF}$ satisfies
$P(0)=0>-1>\sup_{d\in D}P(d)=-1/\ell$.
\end{remark}

We now introduce two evaluation polarities,
at levels $1$ and $-1$. The separation results
of Sections~\ref{sec:positive} and~\ref{sec:bilateral} will allow us to identify
their primal bipolars with the corresponding
closed radiant and co-radiant hulls.

\begin{definition}\label{def:polars}
Let $\mathcal B\subset\cH(X)$, $E\subset X$,
and $P\subset\mathcal B$.
The \emph{positive} and \emph{negative
evaluation polars} are defined by

\begin{align*}
 E^{\oplus_{\mathcal B}}
 &=\{p\in\mathcal B:p(e)\leq1\ (e\in E)\},&
 P^{\oplus_X}
 &=\{x\in X:p(x)\leq1\ (p\in P)\},\\
 E^{\ominus_{\mathcal B}}
 &=\{p\in\mathcal B:p(e)\leq-1\ (e\in E)\},&
 P^{\ominus_X}
 &=\{x\in X:p(x)\leq-1\ (p\in P)\}.
\end{align*}

\end{definition}

Both pairs of polar operators reverse
inclusions. Moreover, they satisfy
\[
E\subset P^{\oplus_X}
\iff P\subset E^{\oplus_{\mathcal B}},
\qquad
E\subset P^{\ominus_X}
\iff P\subset E^{\ominus_{\mathcal B}}.
\]

The following theorem identifies the
primal bipolars associated with the prescribed
functional classes.

\begin{theorem}\label{thm:bipolar}

Let $\cF$ be a directionally fine family
of closed cones in $X$, with associated
functional classes $\mathcal A_{\cF}$,
$\cK_{\cF}$, $\cM_{\cF}$, and
$\cP_{\cF}^{\nc}$. Then the following
assertions hold.
\begin{enumerate}
\item Every nonempty $E\subset X$ satisfies
\begin{equation}\label{eq:positive-bipolar}
 (E^{\oplus_{\mathcal B}})^{\oplus_X}
 =\cl(\rad E)
 \quad\text{for }
 \mathcal B\in
 \{\mathcal A_{\cF},\cK_{\cF},\cM_{\cF}\}.
\end{equation}
\item If $E\ne\varnothing$ and
$\dist(0_X,E)>0$, then
\begin{equation}\label{eq:negative-bipolar}
 (E^{\ominus_{\mathcal B}})^{\ominus_X}
 =\cl(\sdw E)
 \quad\text{for }
 \mathcal B\in
 \{\cP_{\cF}^{\nc},\cM_{\cF}\}.
\end{equation}
For $\cP_{\cF}^{\nc}$, the restriction
$0\leq\alpha<1$ suffices.
\end{enumerate}

In particular, the positive bipolars
fix every nonempty closed radiant set,
whereas the negative bipolars fix every
nonempty closed co-radiant set $D$ with
$0_X\notin D$.
\end{theorem}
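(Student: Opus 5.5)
For each polarity the plan is to prove the two inclusions separately. The easy one comes from the sign behaviour of positively homogeneous functions on rays. The hard one comes from the point-separation theorems of Sections~\ref{sec:positive} and~\ref{sec:bilateral}, applied to the closed hull, which is itself radiant, respectively co-radiant.

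\emph{Positive bipolar, easy inclusion.} Let $p\in E^{\oplus_{\mathcal B}}$, $e\in E$ and $0\leq t\leq1$. Positive homogeneity gives $p(te)=tp(e)$. If $p(e)\geq0$ this is at most $p(e)\leq1$; otherwise it is at most $0\leq1$. Hence $p\leq1$ on $\rad E$, and by continuity on $\cl(\rad E)$. This gives $\cl(\rad E)\subset(E^{\oplus_{\mathcal B}})^{\oplus_X}$ for every $\mathcal B\subset\cH(X)$.

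\emph{Positive bipolar, reverse inclusion.} Put $R=\cl(\rad E)$. It is closed and contains $0_X$ since $E\neq\varnothing$. It is radiant because $\rad E$ is radiant and scalar multiplication by $t\in[0,1]$ is continuous. Take $y\notin R$. Theorem~\ref{thm:radiant-point} gives $p=\varphi/\lambda$ with $\varphi\in\cG_{\cF}$ and $\lambda>0$, satisfying $p(y)>1>\sup_R p$. This $p$ lies in $\mathcal A_{\cF}\subset\cK_{\cF}\subset\cM_{\cF}$ and belongs to $E^{\oplus_{\mathcal B}}$, so $y\notin(E^{\oplus_{\mathcal B}})^{\oplus_X}$ for all three classes at once. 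If $R=X$ there is nothing to prove.

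\emph{Negative bipolar, easy inclusion.} Let $p\in E^{\ominus_{\mathcal B}}$, $e\in E$ and $t\geq1$. Then $p(te)=tp(e)\leq-t\leq-1$, and continuity extends this to the closure. So $\cl(\sdw E)\subset(E^{\ominus_{\mathcal B}})^{\ominus_X}$.

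\emph{Negative bipolar, reverse inclusion.} Put $D=\cl(\sdw E)$. It is closed and co-radiant. The hypothesis $\dist(0_X,E)>0$ is used here: every point $te$ with $t\geq1$ has norm at least $\dist(0_X,E)$, so $\dist(0_X,\sdw E)\geq\dist(0_X,E)>0$, and $0_X\notin D$. Take $y\notin D$. Theorem~\ref{thm:bilateral}(ii) supplies $P=P_{\alpha,\ell,\varphi}$ with $0\leq\alpha<1$ and $y\in\{P>-1\}$, while $\{P\geq-1\}\subset X\setminus D$. Hence $P<-1$ on $D$, so $P\leq-1$ on $E$ and $P\in E^{\ominus_{\mathcal B}}$. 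Since $P(y)>-1$, we get $y\notin(E^{\ominus_{\mathcal B}})^{\ominus_X}$. Proposition~\ref{prop:norm-corrected-inclusion} places $P$ in $\cM_{\cF}$ as well, which handles both classes and also the restricted class with $0\leq\alpha<1$. If $D=X$ the claim is trivial, but $D=X$ is impossible because $0_X\notin D$.

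\emph{Fixed points.} For closed radiant $R$ one has $\cl(\rad R)=R$. For closed co-radiant $D$ with $0_X\notin D$ one has $\cl(\sdw D)=D$, and $\dist(0_X,D)>0$ holds by Proposition~\ref{prop:profile}.

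The main obstacle is checking that the hulls satisfy the standing hypotheses of the separation theorems, which are stated for nonempty proper sets. The delicate cases are $R=X$, handled trivially, and the positivity of $\dist(0_X,D)$. For the latter I will make explicit that it is inherited from $\dist(0_X,E)>0$ through the bound $\|te\|\geq\|e\|$ for $t\geq1$.
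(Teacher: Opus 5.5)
Your proposal is correct and follows the paper's proof essentially step for step. You obtain the hull inclusions from positive homogeneity and continuity, which is the paper's observation that the sublevels are closed radiant or co-radiant sets. You obtain the reverse inclusions from Theorem~\ref{thm:radiant-point} applied to $\cl(\rad E)$ and Theorem~\ref{thm:bilateral}(ii) applied to $\cl(\sdw E)$, with Proposition~\ref{prop:norm-corrected-inclusion} transferring the latter to $\cM_{\cF}$, and you use the same bound $\dist(0_X,\cl(\sdw E))\geq\dist(0_X,E)>0$ as the paper.
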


\begin{proof}
For \textup{(i)}, each sublevel \(\{p\leq1\}\),
\(p\in\cH(X)\), is closed and radiant.
If it contains \(E\), it therefore contains \(R=\cl(\rad E)\).
This proves the hull inclusion for all three classes.
If \(R\ne X\), Theorem~\ref{thm:radiant-point} separates each
\(y\notin R\) by a member of \(\mathcal A_{\cF}\).
That member belongs to the polar of \(E\) and excludes \(y\)
from each bipolar, since
\(\mathcal A_{\cF}\subset\cK_{\cF}\subset\cM_{\cF}\).
If \(R=X\), the hull inclusion already proves equality.

For \textup{(ii)}, each sublevel \(\{p\leq-1\}\) is closed
and co-radiant, so every negative bipolar contains \(D=\cl(\sdw E)\).
Moreover \(\dist(0_X,D)\geq\dist(0_X,E)>0\).
Theorem~\ref{thm:bilateral}(ii) separates every \(y\notin D\)
by a norm-corrected member with \(0\leq\alpha<1\).
It belongs to the negative polar of \(E\) and excludes \(y\)
from the bipolar. The inclusion in
Proposition~\ref{prop:norm-corrected-inclusion} gives the same
conclusion for \(\cM_{\cF}\).
For closed radial sets, the corresponding hull equals the set itself.
\end{proof}

The bipolar identities also admit
explicit representations as intersections
of evaluation sublevel sets.

\begin{remark}\label{rem:polar-intersections}

Let $R$ be a nonempty closed radiant
set and $D$ a nonempty closed co-radiant
set with $0_X\notin D$.
Theorem~\ref{thm:bipolar} yields
\[
 R=\bigcap_{p\in R^{\oplus_{\mathcal B}}}
 \{x\in X:p(x)\leq1\},\qquad
 D=\bigcap_{p\in D^{\ominus_{\mathcal B}}}
 \{x\in X:p(x)\leq-1\},
\]
using the classes $\mathcal B$ in
\eqref{eq:positive-bipolar} for $R$
and those in \eqref{eq:negative-bipolar}
for $D$.

These representations recover
both sets without requiring convexity.
\end{remark}

The bipolar identities also yield
representations of the radial profiles
in terms of the values of polar functions
along unit directions.

\begin{corollary}\label{cor:polar-radial-profiles}

Let $\cF$ be a directionally fine family
of closed cones in $X$.
Let $R\subset X$ be a nonempty closed radiant
set and $D\subset X$ a nonempty closed
co-radiant set with $0_X\notin D$.
For $u\in S_X$ and
$\mathcal B\in
\{\mathcal A_{\cF},\cK_{\cF},\cM_{\cF}\}$,
\begin{equation}\label{eq:polar-radiant-profile}
 \sigma_R(u)=
 \inf_{\substack{p\in R^{\oplus_{\mathcal B}}\\p(u)>0}}
 \frac1{p(u)},
 \qquad \inf\varnothing=+\infty.
\end{equation}
If $\rho_D(u)<+\infty$, then, for
$\mathcal B\in\{\cP_{\cF}^{\nc},\cM_{\cF}\}$,
\begin{equation}\label{eq:polar-coradiant-profile}
 \rho_D(u)=-\frac1{\displaystyle
 \sup_{p\in D^{\ominus_{\mathcal B}}}p(u)}.
\end{equation}
\end{corollary}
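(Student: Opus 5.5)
For \eqref{eq:polar-radiant-profile}, I will prove the two inequalities separately, using only the bipolar identity of Theorem~\ref{thm:bipolar}(i) and the positive homogeneity of each \(p\). First fix \(p\in R^{\oplus_{\mathcal B}}\) with \(p(u)>0\). If \(0\leq t\leq\sigma_R(u)\) is finite, then \(tu\in R\) because \(R\) is closed and radiant. Hence \(t\,p(u)=p(tu)\leq1\), so \(t\leq1/p(u)\). Taking the supremum over such \(t\) gives \(\sigma_R(u)\leq1/p(u)\); in particular \(\sigma_R(u)<+\infty\) whenever such a \(p\) exists. This proves \(\sigma_R(u)\leq\inf\).

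For the reverse inequality, take any finite \(\lambda>\sigma_R(u)\). Then \(\lambda u\notin R\). By Theorem~\ref{thm:bipolar}(i), \(R=(R^{\oplus_{\mathcal B}})^{\oplus_X}\), so some \(p\in R^{\oplus_{\mathcal B}}\) satisfies \(p(\lambda u)>1\). Thus \(p(u)>1/\lambda>0\) and \(1/p(u)<\lambda\). Letting \(\lambda\downarrow\sigma_R(u)\) gives \(\inf\leq\sigma_R(u)\); this covers the case \(\sigma_R(u)=0\). If \(\sigma_R(u)=+\infty\), the first step already shows the index set is empty, so the infimum is \(+\infty\) by convention.

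For \eqref{eq:polar-coradiant-profile}, set \(\rho=\rho_D(u)<+\infty\) and \(S=\sup_{p\in D^{\ominus_{\mathcal B}}}p(u)\). First I need \(D^{\ominus_{\mathcal B}}\neq\varnothing\). Since \(d_D>0\) by Proposition~\ref{prop:profile}, the function \(-\|\cdot\|/\ell\) with \(0<\ell<d_D\) belongs to \(\cP_{\cF}^{\nc}\subset\cM_{\cF}\) and lies in the polar. Now \(\rho u\in D\), because the threshold is attained. So every \(p\) in the polar satisfies \(\rho\,p(u)\leq-1\), that is, \(p(u)\leq-1/\rho\), and therefore \(S\leq-1/\rho<0\).

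Conversely, take \(0<\lambda<\rho\). Then \(\lambda u\notin D\). By Theorem~\ref{thm:bipolar}(ii) applied with \(E=D\), where \(\dist(0_X,D)>0\) and \(\cl(\sdw D)=D\), there is some \(p\) in the polar with \(p(\lambda u)>-1\), so \(p(u)>-1/\lambda\). Letting \(\lambda\uparrow\rho\) gives \(S\geq-1/\rho\). Hence \(S=-1/\rho\), which rearranges to \eqref{eq:polar-coradiant-profile}.

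The only delicate points are bookkeeping at the extended values: the empty index set when \(\sigma_R(u)=+\infty\), and nonemptiness of \(D^{\ominus_{\mathcal B}}\) together with \(S<0\), so that the reciprocal is well defined. Both are handled above.
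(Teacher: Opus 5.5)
Your proof is correct and follows essentially the same route as the paper: both rest on the bipolar identities of Theorem~\ref{thm:bipolar}, positive homogeneity along the ray $\Rp u$, attainment of $\rho_D(u)$, and nonemptiness of the negative polar via a negative multiple of the norm. The only difference is presentational: you split the paper's intersection-of-constraints argument into two explicit inequalities, invoking the bipolar identity only for the reverse direction.
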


\begin{proof}
For the radiant formula, Theorem~\ref{thm:bipolar} gives
\(tu\in R\), \(t\geq0\), exactly when \(tp(u)\leq1\)
for every \(p\in R^{\oplus_{\mathcal B}}\).
Values \(p(u)\leq0\) impose no condition; positive values impose
\(t\leq1/p(u)\).
Their intersection is the interval with endpoint
\eqref{eq:polar-radiant-profile}, with endpoint \(+\infty\)
when there are no positive values.

For the co-radiant formula, attainment of \(\rho_D(u)\) gives
\(p(u)\leq-1/\rho_D(u)<0\) for every
\(p\in D^{\ominus_{\mathcal B}}\).
This polar is nonempty, for example because a sufficiently large
negative multiple of the norm belongs to both admissible classes.
The bipolar identity gives \(tu\in D\) exactly when
\(t\geq-1/p(u)\) for every such \(p\).
The function \(s\mapsto-1/s\) is continuous and increasing on
\((-\infty,0)\). Since the supremum of the values \(p(u)\) is
strictly negative, the supremum of their thresholds equals
\(-1/\sup_p p(u)\), proving \eqref{eq:polar-coradiant-profile}.
\end{proof}

We now introduce the functional classes
generated by the axial family $\cG_{\AX}$.
The explicit axial separators of Section~\ref{sec:bilateral}
will yield the corresponding bipolar identities.

\begin{definition}\label{def:axial-function-classes}
The \emph{generated axial functional classes}
are defined by
\[
 \cK_{\AX}:=
 \cl_{\|\cdot\|_{\cH}}
 \co\bigl(\cone(\cG_{\AX})\bigr),
 \qquad
 \cM_{\AX}:=
 \cl_{\|\cdot\|_{\cH}}
 \co\bigl(\cone(\cG_{\AX}
 \cup\{-\|\cdot\|\})\bigr).
\]
\end{definition}

Since
$P_{\alpha,\ell,u,f,\delta}
=(\alpha/\ell)\varphi_{u,f,\delta}
+(1/\ell)(-\|\cdot\|)$,
the norm-corrected axial family satisfies
\[
 \cP_{\AX}^{\nc}
 \subset\cM_{\AX}\subset\cH(X).
\]

The axial separation results of
Corollaries~\ref{cor:uad-corrected-positive}
and~\ref{cor:uad-bilateral} yield the following
bipolar identities.

\begin{corollary}\label{cor:ncuad-bipolar}
Let $\cF_{\AX}$ be the convex directionally
fine family of axial cones in $X$, with associated
norm-corrected axial family $\cP_{\AX}^{\nc}$
and generated functional class $\cM_{\AX}$.

Every nonempty $E\subset X$ satisfies
\begin{equation}\label{eq:ncuad-positive-bipolar}
 \bigl(E^{\oplus_{\cP_{\AX}^{\nc}}}\bigr)^{\oplus_X}
 =\cl(\rad E).
\end{equation}
For every fixed $\alpha>1$, this identity remains
valid if the functional class $\cP_{\AX}^{\nc}$
in the first polar is restricted to the functions
$P_{\alpha,\ell,u,f,\delta}$.

If $\dist(0_X,E)>0$, then
\begin{equation}\label{eq:ncuad-negative-bipolar}
 \bigl(E^{\ominus_{\cP_{\AX}^{\nc}}}\bigr)^{\ominus_X}
 =\cl(\sdw E).
\end{equation}
This identity remains valid if the functional
class $\cP_{\AX}^{\nc}$ in the first polar is
restricted to the functions
$P_{\alpha,\ell,u,f,\delta}$ with
$0\leq\alpha<1$.

Both identities remain valid with $\cM_{\AX}$
in place of $\cP_{\AX}^{\nc}$.

The first identity fixes every nonempty
closed radiant set, whereas the second fixes
every nonempty closed co-radiant set
that does not contain $0_X$.
\end{corollary}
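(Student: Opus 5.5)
The plan mirrors the proof of Theorem~\ref{thm:bipolar}, with the abstract separators replaced by the explicit axial ones of Corollaries~\ref{cor:uad-corrected-positive} and~\ref{cor:uad-bilateral}. Both identities split into a hull inclusion, which depends only on homogeneity and continuity, and a separation inclusion, which uses the axial corollaries. Throughout, $\mathcal B$ denotes any of the admissible classes: $\cP_{\AX}^{\nc}$, its restriction to a fixed $\alpha>1$ (or to $0\leq\alpha<1$), or $\cM_{\AX}$.

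\textbf{Hull inclusions.} Every $p\in\cH(X)$ has a sublevel $\{p\leq1\}$ that is closed and radiant. Indeed, if $p(x)\leq1$ and $0\leq t\leq1$, then $p(tx)=tp(x)\leq\max\{p(x),0\}\leq1$. Hence any such sublevel containing $E$ contains $R:=\cl(\rad E)$. Intersecting over the polar of $E$ gives $R\subset(E^{\oplus_{\mathcal B}})^{\oplus_X}$ for every $\mathcal B\subset\cH(X)$, and in particular for each class above. Likewise, every $\{p\leq-1\}$ is closed and co-radiant, since $p(tx)=tp(x)\leq-t\leq-1$ for $t\geq1$. This gives $D:=\cl(\sdw E)\subset(E^{\ominus_{\mathcal B}})^{\ominus_X}$.

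\textbf{Positive identity.} If $R=X$, the hull inclusion already gives equality. Otherwise $0_X\in R$ and $R$ is a proper closed radiant set. Fix $y\notin R$; then $y\neq0_X$, so write $y=su$. Apply Corollary~\ref{cor:uad-corrected-positive} with the prescribed $\alpha>1$ and any norming pair $(u,f)$, which exists by Hahn--Banach. This yields $P=P_{\alpha,\ell,u,f,\delta}$ with $P>1$ at $y$ and $\{P\geq1\}\subset X\setminus R$. It follows that $P<1$ on $R\supset E$, so $P\in E^{\oplus_{\mathcal B}}$, while $P(y)>1$ excludes $y$ from the bipolar. Since $\cP_{\AX}^{\nc}\subset\cM_{\AX}$, the same $P$ works for $\cM_{\AX}$.

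\textbf{Negative identity.} The set $D$ is closed and co-radiant, and $\dist(0_X,D)\geq\dist(0_X,E)>0$. Indeed, $\|tx\|\geq\|x\|$ for $t\geq1$, and taking closure does not lower the distance. Hence $0_X\notin D$, and $D$ is nonempty because $E$ is. If $D=X$ there is nothing to prove, although this case cannot occur. For $y\notin D$, Corollary~\ref{cor:uad-bilateral} supplies $P\in\cP_{\AX}^{\nc}$ with $0\leq\alpha<1$, namely $\alpha=0$ when $y=0_X$ and $\alpha\in(1/2,1)$ otherwise. This $P$ satisfies $P(y)>-1$ and $\{P\geq-1\}\subset X\setminus D$. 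Therefore $P<-1$ on $D\supset E$, so $P$ lies in the negative polar of $E$ in each admissible class, including $\cM_{\AX}$, and $P$ excludes $y$ from the bipolar.

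\textbf{Fixed points.} For a closed radiant set, $\cl(\rad R)=R$. For a closed co-radiant $D$ with $0_X\notin D$, Proposition~\ref{prop:profile} gives $d_D=\dist(0_X,D)>0$, so the second identity applies and $\cl(\sdw D)=D$.

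The only step needing care is the bookkeeping of the parameter restrictions: the fixed-$\alpha>1$ version and the $0\leq\alpha<1$ version must use separators that the axial corollaries actually provide, including the degenerate case $y=0_X$. Beyond that, the argument follows directly from the earlier results.
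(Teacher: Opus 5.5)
Your proposal is correct and follows the paper's proof. You obtain the hull inclusions from the closed radiant and co-radiant sublevels of functions in $\cH(X)$. You obtain the reverse inclusions from the explicit axial separators of Corollaries~\ref{cor:uad-corrected-positive} and~\ref{cor:uad-bilateral}, with the prescribed $\alpha>1$ and $0\leq\alpha<1$ respectively. You pass to $\cM_{\AX}$ via $\cP_{\AX}^{\nc}\subset\cM_{\AX}\subset\cH(X)$.

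Your extra bookkeeping only makes explicit what the paper leaves implicit: the case $\cl(\rad E)=X$, the fact that $y\neq 0_X$ in the positive case, and the bound $\dist(0_X,\cl(\sdw E))\geq\dist(0_X,E)$.
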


\begin{proof}
For the positive identity, sublevels at \(1\) of functions in
\(\cH(X)\) contain the closed radiant hull whenever they contain \(E\).
Corollary~\ref{cor:uad-corrected-positive} excludes each exterior
point using any prescribed \(\alpha>1\).
This proves the identity for the full and restricted axial classes.

For the negative identity, \(\dist(0_X,E)>0\) ensures that
\(\cl(\sdw E)\) avoids zero.
Corollary~\ref{cor:uad-bilateral} excludes each exterior point
by a separator with \(0\leq\alpha<1\), while negative sublevels
give the opposite inclusion.
Finally, \(\cP_{\AX}^{\nc}\subset\cM_{\AX}\subset\cH(X)\)
preserves the hull inclusions and all separating functions,
proving both identities for \(\cM_{\AX}\).
\end{proof}

\section{Relative approximation of co-radiant sets}\label{sec:approximation}

For closed co-radiant sets with compact
direction sets, directional fineness allows
finite relative outer approximations by
positive superlevels of selected adapted
generators. In this section, we establish directional and
bounded-section error estimates, identify
conditions ensuring preservation of separation,
and obtain an explicit axial recovery map.

\begin{definition}\label{def:enlargements}

Let $D\subset X$ be a nonempty closed
co-radiant set with $0_X\notin D$, and let
$0<\eta<1$. The \emph{co-radiant
$\eta$-neighbourhood} of $D$ is defined by
\[
D_\eta
:=\bigcup_{a\in\Gamma_D}
\sdw B(a,\eta\|a\|).
\]

\end{definition}

The set $D_\eta$ is co-radiant and
satisfies $D\subset\intt D_\eta$.
No closedness of $D_\eta$ is asserted.

\begin{definition}\label{def:relative-outer-approx}

Let $D\subset X$ be a nonempty closed
co-radiant set with $0_X\notin D$,
let $\cF$ be a directionally fine family
of closed cones in $X$, with selected
adapted generators $\cG_{\cF}$, and let
$0<\eta<1$.

A \emph{finite relative $\eta$-outer approximation}
of $D$ associated with $\cF$ is a set
\[
D^\eta:=\bigcup_{i=1}^{m}G_i,
\qquad
G_i:=\{\varphi_i\geq\lambda_i\},
\]

where $m\in\mathbb N$,
$\varphi_i\in\cG_{\cF}$ and $\lambda_i>0$
for $1\leq i\leq m$, such that
\[
D\subset\intt D^\eta
\subset D^\eta\subset D_\eta.
\]
\end{definition}

We now establish finite relative outer approximations
of directionally compact co-radiant sets and derive
quantitative geometric error estimates.
The finite selection takes place on $\Sigma_D$,
rather than on the possibly unbounded radial base
$\Gamma_D$; the required relatively open cover
is obtained using lower semicontinuity of $\rho_D$
and continuity of the selected generators.

\begin{theorem}\label{thm:relative-approx}

Let $D\subset X$ be a nonempty closed,
co-radiant, and directionally compact set with
$0_X\notin D$, and let $\cF$ be a directionally
fine family of closed cones in $X$.

For every $0<\eta<1$, there exist
$m\in\mathbb N$, points $a_i\in\Gamma_D$,
localized pairs $(C_i,u_i)\in\mathcal L_{\cF}$
with $u_i=a_i/\|a_i\|$, selected generators
$\varphi_i=\varphi_{C_i,u_i}\in\cG_{\cF}$,
and levels $0<\lambda_i<\|a_i\|$,
$1\leq i\leq m$.

Set $G_i:=\{\varphi_i\geq\lambda_i\}$
and $D^\eta:=\bigcup_{i=1}^m G_i$.

Each $a_i$ belongs to the strict
superlevel $\{\varphi_i>\lambda_i\}$, and
$G_i\subset\sdw B(a_i,\eta\|a_i\|)$.

Moreover, $D^\eta$ is closed and
co-radiant, and
\begin{equation}\label{eq:relative-inclusion}
D\subset\bigcup_{i=1}^m
\{\varphi_i>\lambda_i\}
\subset\intt D^\eta
\subset D^\eta\subset D_\eta.
\end{equation}

For every $i$ and $x\in G_i$,
there exists $t\geq1$ such that
$d_x:=ta_i\in D$ and
\begin{equation}\label{eq:abstract-recovery}
\|x-d_x\|\leq
\frac{\eta}{1-\eta}\|x\|.
\end{equation}

Furthermore,
\begin{align}
d_H(\Sigma_{D^\eta},\Sigma_D)
&\leq\eta,
\label{eq:relative-angular}\\
e(D^\eta\cap TB_X,D)
&\leq\frac{T\eta}{1-\eta},
\qquad \forall T>0.
\notag
\end{align}

If $\mathcal I_D<+\infty$, then
\[
d_H(D^\eta\cap TB_X,D\cap TB_X)
\leq\frac{2T\eta}{1-\eta},
\qquad \forall T\geq\mathcal I_D.
\]

For convex directionally fine $\cF$,
the $\varphi_i$ may be chosen superlinear
and the $G_i$ convex.
\end{theorem}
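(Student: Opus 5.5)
The plan is to build one local witness per direction of \(\Sigma_D\) with Theorem~\ref{thm:local}, use lower semicontinuity of \(\rho_D\) to get relatively open patches of \(\Sigma_D\), and select finitely many by compactness; the estimates then follow from the sandwich inclusion.

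\textbf{Local construction.} Fix \(u\in\Sigma_D\) and put \(a:=\rho_D(u)u\in\Gamma_D\). This point lies in \(D\) by Proposition~\ref{prop:profile}, and \(s:=\|a\|=\rho_D(u)\geq d_D>0\). Apply Theorem~\ref{thm:local} with \(y=a\) and the smaller outer radius \(\varepsilon:=\eta s/2<s\). This gives a localized pair \((C,u)\), its selected generator \(\varphi=\varphi_{C,u}\), and a level \(s-\varepsilon<\lambda<s\) such that
\[
a\in\{\varphi>\lambda\}\subset\{\varphi\geq\lambda\}\subset\sdw B(a,\eta s/2)\subset\sdw B(a,\eta\|a\|).
\]
The halved radius is what will later give the angular constant \(\eta\) rather than \(2\eta\). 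If \(\cF\) is convex directionally fine, Theorem~\ref{thm:local} lets us take \(C\) convex and \(\varphi\) superlinear, so the \(G_i\) are convex.

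\textbf{Covering step (main obstacle).} I expect this to be the delicate part, because \(\Gamma_D\) need not be compact and \(\rho_D\) need not be continuous (Remark~\ref{rem:radial-compactness}), so compactness must be used on \(\Sigma_D\) instead. Set
\[
V_u:=\{v\in\Sigma_D:\rho_D(v)v\in\{\varphi>\lambda\}\}.
\]
I will show that \(V_u\) contains a relative neighbourhood of \(u\) in \(\Sigma_D\). Pick \(s'\) with \(\lambda<s'<s\). By lower semicontinuity of \(\rho_D\), we have \(\rho_D(v)>s'\) for \(v\in\Sigma_D\) near \(u\). By continuity of \(\varphi\) and \(\varphi(u)=1\), we have \(\varphi(v)>\lambda/s'>0\) for \(v\) near \(u\). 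Hence \(\varphi(\rho_D(v)v)=\rho_D(v)\varphi(v)>\lambda\) on such a neighbourhood.

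Because \(\{\varphi>\lambda\}\) is co-radiant, the formula \eqref{eq:radial-representation} shows that every point \(tv\) with \(t\geq\rho_D(v)\) and \(v\in V_u\) lies in \(\{\varphi>\lambda\}\). Compactness of \(\Sigma_D\) then yields finitely many directions \(u_i\), with associated \(a_i,\varphi_i,\lambda_i\), whose patches \(V_{u_i}\) cover \(\Sigma_D\). Consequently \(D\subset\bigcup_i\{\varphi_i>\lambda_i\}\).

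This union of open sets lies in \(\intt D^\eta\). The set \(D^\eta\) is closed as a finite union of closed superlevels and co-radiant by homogeneity. Finally \(D^\eta\subset D_\eta\) follows from the local sandwich, which proves \eqref{eq:relative-inclusion}.

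\textbf{Estimates.}
\begin{itemize}
\item \emph{Recovery.} For \(x\in G_i\), write \(x=tz\) with \(t\geq1\) and \(\|z-a_i\|\leq\eta\|a_i\|\). Put \(d_x:=ta_i\), which lies in \(D\) by co-radiancy. Then \(\|x-d_x\|\leq t\eta\|a_i\|\), while \(\|x\|\geq t(1-\eta)\|a_i\|\). Combining the two gives \eqref{eq:abstract-recovery}.
\item \emph{Angular estimate.} Here \(\Sigma_D\subset\Sigma_{D^\eta}\), so one excess vanishes. For the other, the sharper inclusion \(G_i\subset\sdw B(a_i,\eta\|a_i\|/2)\) and \eqref{eq:normalization} give \(\bigl\|x/\|x\|-u_i\bigr\|\leq\eta\) with \(u_i\in\Sigma_D\). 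This proves \eqref{eq:relative-angular}.
\item \emph{Bounded-section excess.} The bound \(e(D^\eta\cap TB_X,D)\leq T\eta/(1-\eta)\) is immediate from the recovery estimate.
\item \emph{Hausdorff bound.} Assume \(T\geq\mathcal I_D\). The excess \(e(D\cap TB_X,D^\eta\cap TB_X)\) is zero since \(D\subset D^\eta\). For the other excess, the recovered point \(d_x\) may have norm larger than \(T\). In that case I replace it by \((T/\|d_x\|)d_x\), a multiple \(t'a_i\) with \(t'\geq1\), because \(\|a_i\|\leq\mathcal I_D\leq T\); this point lies in \(D\cap TB_X\). The extra cost is \(\|d_x\|-T\leq\|d_x\|-\|x\|\leq\|x-d_x\|\), which at most doubles the constant and gives \(2T\eta/(1-\eta)\).
\end{itemize}
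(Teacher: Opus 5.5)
Your proof is correct and follows the paper's argument. It uses a local sandwich from Theorem~\ref{thm:local} at each \(a_u=\rho_D(u)u\), a directional cover of \(\Sigma_D\) (your \(V_u\) is exactly the paper's \(W_u=\{v\in\Sigma_D:\rho_D(v)\varphi_u(v)>\lambda_u\}\), relatively open by lower semicontinuity of \(\rho_D\)), a finite subcover, and the same recovery and truncation \(d_x\mapsto Tu_i\) estimates. The only cosmetic difference is in the angular constant \(\eta\): the paper gets it from the aperture \(\theta_i<\eta/2\) of the localized cone (so \(\|v-u_i\|\leq2\theta_i\)), whereas you halve the outer radius and apply \eqref{eq:normalization}; both work.
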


\begin{proof}
For the finite selection, fix \(u\in\Sigma_D\), put
\(s_u=\rho_D(u)\), \(a_u=s_u u\), and choose
\[
 (1-\eta)s_u<\lambda_u<s_u,\qquad
 0<\theta_u<
 \frac{\lambda_u-(1-\eta)s_u}{2\lambda_u}<\frac{\eta}{2}.
\]
Directional fineness supplies a localized cone
\(C_u\subset\cone B(u,\theta_u)\) and its selected generator \(\varphi_u\).
Theorem~\ref{thm:local}, with radius \(\eta s_u\), gives the
required local sandwich at \(a_u\).
The directional sets
\begin{equation}\label{eq:direction-cover}
 W_u=\{v\in\Sigma_D:\rho_D(v)\varphi_u(v)>\lambda_u\}
\end{equation}
contain \(u\). They are relatively open: their points satisfy
\(\varphi_u>0\), and on that open set the product of the positive
lower semicontinuous function \(\rho_D\) and the positive continuous
function \(\varphi_u\) is lower semicontinuous.
Compactness gives a finite subcover \(W_{u_1},\ldots,W_{u_m}\).
Retain its local data \(a_i=\rho_D(u_i)u_i\), \(\lambda_i\),
\(\varphi_i\), \(C_i\), and \(\theta_i\).
For \(x=t\rho_D(v)v\in D\), \(t\geq1\), some \(i\) satisfies
\(\varphi_i(x)=t\rho_D(v)\varphi_i(v)>\lambda_i\).
The strict superlevels are open and contained in \(D^\eta\);
the local sandwiches then prove \eqref{eq:relative-inclusion}.
The finite union is closed and co-radiant.

For recovery, write \(x\in G_i\) as \(x=t(a_i+h)\), with
\(t\geq1\), \(\|h\|\leq\eta\|a_i\|\).
Co-radiancy gives \(d_x=ta_i\in D\), and
\[
 \|x\|\geq t(1-\eta)\|a_i\|,\qquad
 \|x-d_x\|=t\|h\|\leq\frac{\eta}{1-\eta}\|x\|.
\]
Taking the supremum over \(D^\eta\cap TB_X\) proves the excess bound.
Each \(v\in\Sigma_{D^\eta}\) belongs to some \(C_i\cap S_X\);
hence \(\dist(v,\Sigma_D)\leq\|v-u_i\|\leq2\theta_i<\eta\).
The opposite angular excess is zero, proving \eqref{eq:relative-angular}.

For the truncated Hausdorff estimate, let \(T\geq\mathcal I_D\).
If \(\|d_x\|>T\), replace \(d_x\) by \(z=Tu_i\).
Since \(T\geq\rho_D(u_i)\), this point belongs to \(D\cap TB_X\).
Also \(\|d_x\|-T\leq\|d_x\|-\|x\|\leq\|d_x-x\|\),
so \(\|x-z\|\leq2\|x-d_x\|\).
If \(\|d_x\|\leq T\), no replacement is needed.
The opposite truncated excess is zero because \(D\subset D^\eta\).
Finally, convex directional fineness allows superlinear local generators,
whose positive superlevels are convex.
\end{proof}

\begin{remark}
\label{rem:upper-bound}\label{rem:approximation-scope}

The finite selection in
Theorem~\ref{thm:relative-approx} requires
compactness of $\Sigma_D$, but not of $\Gamma_D$.
The condition $T\geq\mathcal I_D$ in the
truncated Hausdorff estimate ensures that
radial truncation of recovered points
remains in $D$.

The bounded-section estimates cannot, in general, be extended to a
finite global Hausdorff distance, even for a finite relative outer
approximation. To see this, work in Euclidean $\R^2$ and set
$D=\{te_2:t\geq1\}$ and $\eta=3/4$. Then $D$ is closed and
co-radiant, $0_X\notin D$, and
$\Sigma_D=\{e_2\}$ and $\Gamma_D=\{e_2\}$.
For the norming pair $u=e_2$, $f(x)=x_2$, and $\delta=1/2$,
we have $Q_{u,f}x=(x_1,0)$ and
$\varphi_{u,f,\delta}(x)=x_2-2|x_1|$. Thus the single positive
superlevel
\[
E:=\{\varphi_{u,f,\delta}\geq1/2\}
  =\{x:x_2-2|x_1|\geq1/2\}
\]
is associated with the axial directionally fine family. It is closed
and co-radiant. Moreover, for $te_2\in D$ one has
$\varphi_{u,f,\delta}(te_2)=t\geq1>1/2$, so
$D\subset\intt E$.

We check the required outer inclusion using
$D_\eta=\sdw B(e_2,\eta)$ from
Definition~\ref{def:enlargements}. If $x=(x_1,x_2)\in E$,
then $x_2\geq1/2$ and $|x_1|\leq(x_2-1/2)/2$.
When $x_2\geq1$, write
\[
x=x_2\bigl(e_2+(x_1/x_2,0)\bigr),\qquad
\frac{|x_1|}{x_2}\leq\frac12-\frac{1}{4x_2}
<\frac12<\eta.
\]
Hence $x\in\sdw B(e_2,\eta)$. When $1/2\leq x_2\leq1$,
\[
\|x-e_2\|\leq |x_1|+1-x_2
\leq\frac{x_2-1/2}{2}+1-x_2
=\frac34-\frac{x_2}{2}\leq\frac12<\eta,
\]
so $x\in B(e_2,\eta)\subset D_\eta$ as well. Therefore
$D\subset\intt E\subset E\subset D_\eta$, and $E$ is a finite
relative $\eta$-outer approximation in the sense of
Definition~\ref{def:relative-outer-approx}.

Finally, for every $t\geq1$ the point
$x_t=(t/2,t+1/2)$ satisfies
$\varphi_{u,f,\delta}(x_t)=1/2$, hence $x_t\in E$.
Its orthogonal projection onto $D$ is $(0,t+1/2)$, so
\[
\dist(x_t,D)=t/2\longrightarrow+\infty.
\]
Consequently $e(E,D)=d_H(E,D)=+\infty$.
\end{remark}

The following lemma establishes a
positive relative separation gap between
a closed directionally compact co-radiant set
and a disjoint closed radiant set. This gap
will ensure that sufficiently fine relative
outer approximations preserve disjointness.

\begin{lemma}\label{lem:relative-base-gap}

Let $D\subset X$ be a nonempty closed,
co-radiant, and directionally compact set,
and let $R\subset X$ be a nonempty closed
radiant set such that $D\cap R=\varnothing$.
Then
\[
0<\gamma_{D,R}:=
\inf_{a\in\Gamma_D}
\frac{\dist(a,R)}{\|a\|}\leq1.
\]
\end{lemma}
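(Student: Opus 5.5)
The upper bound is immediate: since $0_X\in R$, every $a\in\Gamma_D$ satisfies $\dist(a,R)\leq\|a\|$, so each quotient is at most $1$. The infimum is over a nonempty set, because $D\neq\varnothing$ gives $\Sigma_D\neq\varnothing$. The substance is the strict positivity. I would argue by contradiction. Suppose $a_n=\rho_D(u_n)u_n\in\Gamma_D$ and $r_n\in R$ satisfy $\|a_n-r_n\|\leq\varepsilon_n\|a_n\|$ with $\varepsilon_n\to0$. Compactness of $\Sigma_D$ lets me pass to a subsequence with $u_n\to u\in\Sigma_D$.

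The key move is to normalize away the possibly unbounded radii. This matters because, by Remark~\ref{rem:radial-compactness}, $\rho_D$ need not be bounded or continuous. Put $s_n=\rho_D(u_n)\geq d_D>0$. I would use the fixed radius $\rho:=\rho_D(u)<+\infty$ rather than $s_n$.

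\emph{Case $s_n\geq\rho$ along a subsequence.} Radiancy lets me scale down: $(\rho/s_n)r_n\in R$ and
\[
\|\rho u_n-(\rho/s_n)r_n\|=(\rho/s_n)\|a_n-r_n\|\leq\varepsilon_n\rho\to0 .
\]
Since $\rho u_n\to\rho u=\rho_D(u)u\in D$, the points $(\rho/s_n)r_n$ of $R$ converge to $\rho_D(u)u$. Closedness of $R$ then gives $\rho_D(u)u\in R\cap D$, which is a contradiction.

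\emph{Case $s_n<\rho$ eventually.} The radii $s_n$ are bounded in $[d_D,\rho]$, so I pass to a further subsequence with $s_n\to s\in[d_D,\rho]$. Then $a_n\to su$, and $r_n\to su$ as well, because $\|a_n-r_n\|\leq\varepsilon_n\rho$. Closedness of $D$ gives $su\in D$, and closedness of $R$ gives $su\in R$, again a contradiction. Lower semicontinuity of $\rho_D$ (Proposition~\ref{prop:profile}) actually forces $s=\rho$ here, but this is not needed.

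The main obstacle is the unbounded-radius case, where $a_n$ itself need not converge. The radiant rescaling to the finite threshold $\rho_D(u)$ of the limit direction is exactly what resolves it. Both cases rely only on closedness of $R$ and $D$, radiancy of $R$, attainment of $\rho_D(u)$, and compactness of $\Sigma_D$.
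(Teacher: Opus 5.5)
Your proof is correct and follows the paper's argument. You take a contradiction sequence, use compactness of $\Sigma_D$ to get $u_n\to u$, pass to direct limits when the radii are bounded, and otherwise rescale radiantly by $\rho_D(u)/s_n\leq1$ onto the attained threshold $\rho_D(u)u\in D$; the only difference is that you split the cases at $s_n\geq\rho_D(u)$ rather than at bounded versus unbounded $s_n$, which changes nothing of substance.
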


\begin{proof}
Since \(0_X\in R\), one has \(\dist(a,R)\leq\|a\|\), proving
the upper bound. If the infimum were zero, choose
\(a_n=s_nu_n\in\Gamma_D\) and \(r_n\in R\) with
\(\|a_n-r_n\|/s_n\to0\).
Compactness gives a subsequence with \(u_n\to u\in\Sigma_D\).
If \(s_n\) has a bounded subsequence, take \(s_n\to s\geq d_D\).
Then \(a_n,r_n\to su\), and closedness gives \(su\in D\cap R\).

Otherwise take \(s_n\to+\infty\) and put \(b=\rho_D(u)<+\infty\).
For large \(n\), \(b/s_n\leq1\), so radiancy gives
\((b/s_n)r_n\in R\).
The bound
\(\|(b/s_n)r_n-bu\|\leq b\|r_n-a_n\|/s_n+b\|u_n-u\|\to0\)
and closedness imply \(bu\in R\).
Radial attainment gives \(bu\in D\), again a contradiction.
\end{proof}

The relative gap from
Lemma~\ref{lem:relative-base-gap} ensures
that sufficiently fine approximations
preserve separation.

\begin{corollary}\label{cor:preserving}

Let $D$ and $R$ satisfy the hypotheses
of Lemma~\ref{lem:relative-base-gap},
and let $\cF$ be a directionally fine
family of closed cones in $X$.
For every $0<\eta<\gamma_{D,R}$,
Theorem~\ref{thm:relative-approx}
provides a finite relative
$\eta$-outer approximation $D^\eta$
satisfying
\[
D\subset\intt D^\eta
\subset D^\eta\subset D_\eta
\subset X\setminus R.
\]

Moreover, the associated scalarization
$\Psi:=\max_{1\leq i\leq m}
\varphi_i/\lambda_i$ is Lipschitz
and positively homogeneous, satisfies
$D^\eta=\{\Psi\geq1\}$, and strictly
separates $D$ from $R$ in the sense that
$\Psi(d)>1>\Psi(r)$ for all
$d\in D$ and $r\in R$.
\end{corollary}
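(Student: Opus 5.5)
The plan is to combine Theorem~\ref{thm:relative-approx} with the relative gap $\gamma_{D,R}$ of Lemma~\ref{lem:relative-base-gap}. Fix $0<\eta<\gamma_{D,R}\leq1$ and apply Theorem~\ref{thm:relative-approx}. This yields points $a_i\in\Gamma_D$, generators $\varphi_i$, levels $0<\lambda_i<\|a_i\|$, and the chain $D\subset\bigcup_i\{\varphi_i>\lambda_i\}\subset\intt D^\eta\subset D^\eta\subset D_\eta$. It therefore only remains to prove $D_\eta\cap R=\varnothing$ and then to read off the properties of $\Psi$.

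For the disjointness, take $a\in\Gamma_D$ and $z\in B(a,\eta\|a\|)$. The definition of $\gamma_{D,R}$ gives $\dist(a,R)\geq\gamma_{D,R}\|a\|>\eta\|a\|\geq\|z-a\|$, so $z\notin R$. The ball $B(a,\eta\|a\|)$ therefore avoids $R$. Radiancy then transfers this to the dilation shadow, exactly as in the proof of Theorem~\ref{thm:radiant-point}: if $tz\in R$ with $t\geq1$, then $z=(1/t)(tz)\in R$, a contradiction. Taking the union over $a\in\Gamma_D$ gives $D_\eta\subset X\setminus R$. The only subtle point is that the strict inequality $\eta<\gamma_{D,R}$ is what allows the closed ball of radius $\eta\|a\|$ to be used, and the infimum defining $\gamma_{D,R}$ is uniform over $a$.

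For the scalarization, positive homogeneity and the Lipschitz bound come from the same finite-maximum argument as in Theorem~\ref{thm:compact-positive}. Since each $\lambda_i>0$, we have $\{\varphi_i\geq\lambda_i\}=\{\varphi_i/\lambda_i\geq1\}$. A finite maximum is at least $1$ precisely when some term is, so $D^\eta=\bigcup_iG_i=\{\Psi\geq1\}$.

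Strict separation then follows from the inclusions already established. For $d\in D$, the first inclusion in \eqref{eq:relative-inclusion} gives an index $i$ with $\varphi_i(d)>\lambda_i$, hence $\Psi(d)>1$. For $r\in R$, we have $r\notin D^\eta=\{\Psi\geq1\}$, hence $\Psi(r)<1$. I expect no real obstacle here, since everything reduces to the already proven gap lemma and the approximation theorem. The one step to handle carefully is the shadow-avoidance argument, which must use radiancy of $R$ and not any property of $D$.
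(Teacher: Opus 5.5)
Your proposal is correct and follows essentially the same route as the paper: the strict gap $\eta<\gamma_{D,R}$ plus radiancy of $R$ gives $D_\eta\cap R=\varnothing$, and then Theorem~\ref{thm:relative-approx} together with the finite-maximum properties of $\Psi$ yields the remaining claims. You also spell out the identity $D^\eta=\{\Psi\geq1\}$, which the paper leaves implicit.
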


\begin{proof}
Suppose \(t(a+h)\in R\), where \(t\geq1\), \(a\in\Gamma_D\),
and \(\|h\|\leq\eta\|a\|\).
Radiancy gives \(a+h\in R\), so
\(\gamma_{D,R}\leq\dist(a,R)/\|a\|\leq\eta\), a contradiction.
Thus \(D_\eta\cap R=\varnothing\).
Apply Theorem~\ref{thm:relative-approx}.
Its strict cover gives \(\Psi(d)>1\) for \(d\in D\);
disjointness of every weak superlevel from \(R\) gives
\(\varphi_i(r)/\lambda_i<1\) for every \(i\), hence \(\Psi(r)<1\).
A finite maximum preserves positive homogeneity and Lipschitz continuity.
\end{proof}

For the axial family, finite relative
outer approximations admit an explicit recovery
map that improves the relative error bound
of Theorem~\ref{thm:relative-approx}
from $\eta/(1-\eta)$ to $\eta$.

\begin{theorem}\label{thm:uad-recovery}

Let $D\subset X$ be a nonempty closed,
co-radiant, and directionally compact set
with $0_X\notin D$. For every $0<\eta<1$,
there exist $m\in\mathbb N$, norming pairs
$(u_i,f_i)$ with $u_i\in\Sigma_D$,
and parameters $0<\delta_i\leq\eta/2$,
$1\leq i\leq m$, such that, setting
$a_i:=\rho_D(u_i)$,
$\lambda_i:=a_i/(1+\eta)$, and
$\varphi_i:=\varphi_{u_i,f_i,\delta_i}$,
the set
$D^\eta:=\bigcup_{i=1}^m
\{\varphi_i\geq\lambda_i\}$
satisfies
$D\subset\intt D^\eta
\subset D^\eta\subset D_\eta$.

For every $i$ and
$x\in\{\varphi_i\geq\lambda_i\}$,
define the recovery map
\begin{equation}\label{eq:uad-recovery-map}
d_i(x):=\max\{f_i(x),a_i\}u_i.
\end{equation}
Then
\begin{equation}\label{eq:uad-recovery-estimate}
d_i(x)\in D,\qquad
\|x-d_i(x)\|\leq\eta\|x\|,\qquad
\|d_i(x)\|\leq(1+\eta)\|x\|.
\end{equation}
Consequently,
\[
\begin{aligned}
d_H(\Sigma_{D^\eta},\Sigma_D)&\leq\eta,\\
e(D^\eta\cap TB_X,D)&\leq T\eta,
\qquad \forall T>0.
\end{aligned}
\]
If $\mathcal I_D<+\infty$, then
\[
d_H(D^\eta\cap TB_X,D\cap TB_X)
\leq2T\eta,
\qquad \forall T\geq\mathcal I_D.
\]
\end{theorem}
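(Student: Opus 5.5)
The plan is to redo the finite selection of Theorem~\ref{thm:relative-approx} directly with axial generators, using the explicit structure $x=f_i(x)u_i+Q_{u_i,f_i}x$ instead of Theorem~\ref{thm:local}. For each $u\in\Sigma_D$ I fix a norming pair $(u,f_u)$ by Hahn--Banach and any $0<\delta_u\leq\eta/2$. I put $a_u=\rho_D(u)$ (a scalar, as in the statement) and $\lambda_u=a_u/(1+\eta)$, and let $\varphi_u=\varphi_{u,f_u,\delta_u}$.

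Since $\varphi_u(u)=1$, the point $u$ lies in the set $W_u=\{v\in\Sigma_D:\rho_D(v)\varphi_u(v)>\lambda_u\}$. Exactly as in \eqref{eq:direction-cover}, $W_u$ is relatively open, by lower semicontinuity of $\rho_D$ (Proposition~\ref{prop:profile}) and continuity of $\varphi_u$ on $\{\varphi_u>0\}$. Compactness of $\Sigma_D$ then gives a finite subcover indexed by $u_1,\dots,u_m$. Every $x=t\rho_D(v)v\in D$ with $t\geq1$ satisfies $\varphi_i(x)>\lambda_i$ for some $i$. Since strict superlevels are open, this yields $D\subset\bigcup_i\{\varphi_i>\lambda_i\}\subset\intt D^\eta$.

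The core computation handles a point $x$ with $\varphi_i(x)\geq\lambda_i$. Write $t=f_i(x)$, $Q=Q_{u_i,f_i}$ and $a=a_i$. From the definition of $\varphi_i$ we get $t\geq\lambda_i+\delta_i^{-1}\|Qx\|$. Hence $t\geq\lambda_i>0$ and $\|Qx\|\leq\delta_i(t-\lambda_i)$. The point $d_i(x)=\max\{t,a\}u_i$ lies in $D$ because $\max\{t,a\}\geq\rho_D(u_i)$ and the threshold is attained (Proposition~\ref{prop:profile}). I then split into two cases.
\begin{itemize}
\item If $t\geq a$, then $x-d_i(x)=Qx$, so $\|x-d_i(x)\|\leq\delta_i t\leq(\eta/2)\|x\|$, using $t=f_i(x)\leq\|x\|$.
\item If $\lambda_i\leq t<a$, then $\|x-d_i(x)\|\leq\|Qx\|+(a-t)\leq\delta_i(t-\lambda_i)+(a-t)$. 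Since $\delta_i\leq1$, this is at most $a-\lambda_i=\eta\lambda_i\leq\eta t\leq\eta\|x\|$.
\end{itemize}
Together these give \eqref{eq:uad-recovery-estimate}, and the norm bound follows from the triangle inequality.

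The same case split gives the outer inclusion $G_i\subset\sdw B(a u_i,\eta a)$, hence $D^\eta\subset D_\eta$.
\begin{itemize}
\item If $t\geq a$, write $x=(t/a)\bigl(au_i+(a/t)Qx\bigr)$, where $\|(a/t)Qx\|\leq a\delta_i\leq\eta a$.
\item If $t<a$, the computation above gives $\|x-au_i\|\leq a-\lambda_i<\eta a$ directly.
\end{itemize}

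For the consequences, first consider $v\in\Sigma_{D^\eta}$. Such a $v$ lies in some $C_{u_i,f_i,\delta_i}\subset\cone B(u_i,\delta_i)$ by \eqref{eq:uad-narrow}, which applies with any aperture in $(\delta_i,1)$ and hence, by closedness, with $\delta_i$ itself. Lemma~\ref{lem:directions} then gives $\|v-u_i\|\leq2\delta_i\leq\eta$, while the reverse angular excess vanishes because $D\subset D^\eta$. The excess bound $e(D^\eta\cap TB_X,D)\leq T\eta$ is immediate from the recovery estimate. For the truncated Hausdorff bound with $T\geq\mathcal I_D$, I repeat the truncation step of Theorem~\ref{thm:relative-approx}: if $\|d_i(x)\|>T$, replace $d_i(x)$ by $Tu_i\in D\cap TB_X$, which at most doubles the error.

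The main obstacle is the second case $t<a$, where a transverse error and an axial deficit must be absorbed together. The choices $\lambda_i=a_i/(1+\eta)$ and $\delta_i\leq1$ are exactly what make these two errors combine into $a_i-\lambda_i=\eta\lambda_i\leq\eta\|x\|$.
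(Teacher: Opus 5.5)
Your proof is correct and follows the paper's argument essentially step by step. You use the same relatively open directional cover $W_u$ with $\lambda_u=\rho_D(u)/(1+\eta)$, the same splitting $x=f_i(x)u_i+Q_{u_i,f_i}x$ with the cases $t\geq a_i$ and $t<a_i$, and the same radial truncation for $T\geq\mathcal I_D$. The differences are cosmetic: you telescope using $\delta_i\leq1$ instead of $\delta_i\leq\eta$, and you get the angular bound from the cone inclusion rather than from \eqref{eq:normalization}. The inclusion $C_{u_i,f_i,\delta_i}\subset\cone B(u_i,\delta_i)$ follows directly from Lemma~\ref{lem:axial-cone-representation}, or you can let the aperture decrease to $\delta_i$ in the bound $2\eta'$; this is cleaner than your vague appeal to closedness.
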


\begin{proof}
For each \(u\in\Sigma_D\), choose a norming functional,
\(0<\delta_u\leq\eta/2\), and \(\lambda_u=\rho_D(u)/(1+\eta)\).
The directional sets \eqref{eq:direction-cover} contain \(u\)
and are relatively open by the same lower semicontinuity argument
as in Theorem~\ref{thm:relative-approx}.
A finite subcover therefore gives a strict cover of \(D\).

For recovery from a selected piece, put \(t=f_i(x)\),
\(q=\|Q_{u_i,f_i}x\|\), and \(m=\max\{t,a_i\}\).
The level inequality gives \(t\geq\lambda_i>0\) and
\(q\leq\delta_i(t-\lambda_i)\).
Since \(m\geq a_i\), \(d_i(x)=mu_i\in D\).
If \(t\geq a_i\), then \(\|x-d_i(x)\|=q\leq\eta t\).
If \(t<a_i\), use \(a_i-t\leq a_i-\lambda_i=\eta\lambda_i\)
and \(\delta_i\leq\eta\) to obtain
\(\|x-d_i(x)\|\leq q+a_i-t
\leq\eta(t-\lambda_i)+\eta\lambda_i=\eta t\).
Moreover \(t\leq\|x\|\) and \(m\leq(1+\eta)t\).
These bounds prove \eqref{eq:uad-recovery-estimate}.
Writing \(x=(m/a_i)(a_i u_i+h)\), with
\(h=(a_i/m)(x-mu_i)\), gives \(\|h\|\leq\eta a_i\);
hence \(x\in D_\eta\).

For the angular estimate, \(x=tu_i+Q_{u_i,f_i}x\) and \(q<t\).
Equation~\eqref{eq:normalization} gives
\(\|x/\|x\|-u_i\|\leq2q/t
\leq2\delta_i(1-\lambda_i/t)\leq\eta\).
The opposite angular excess is zero.
The recovery estimate gives the bounded-section excess.
Finally, for \(T\geq\mathcal I_D\), replace \(d_i(x)\) by \(Tu_i\)
if necessary. This point remains feasible and the replacement
adds at most \(\|x-d_i(x)\|\), proving the truncated Hausdorff bound.
\end{proof}

In particular, if $R$ is closed and
radiant with $D\cap R=\varnothing$, then
the axial approximations of
Theorem~\ref{thm:uad-recovery} satisfy
$D^\eta\subset X\setminus R$ whenever
$0<\eta<\gamma_{D,R}$, by
Corollary~\ref{cor:preserving}.

\section{Penalization, recovery, and convergence}\label{sec:optimization}

In this section, we study the minimization of a Lipschitz
objective over a closed co-radiant set $D$
through finite relative outer approximations
$D^\eta$.
For superlinear generators, a directional
repair yields a residual error bound and
exact penalization of the finite approximations.
We then recover feasible points in $D$,
with separate estimates for the approximation
error and the optimization tolerance, and
establish subsequential convergence of bounded
approximate minimizers as both errors vanish.
Finally, we specialize the recovery estimates
to the axial family and illustrate them in
a three-ray production model.

We introduce a residual that measures
constraint violation for a finite union of
positive superlevels, rather than for the
original feasible set $D$.

\begin{definition}\label{def:residual}

Let $m\in\mathbb N$ and, for
$1\leq i\leq m$, let
$\varphi_i:X\to\R$ be continuous
superlinear functions, $u_i\in S_X$
satisfy $\varphi_i(u_i)=1$,
and $\lambda_i>0$. Define
\begin{equation}\label{eq:residual}
\begin{gathered}
E:=\bigcup_{i=1}^m\{\varphi_i\geq\lambda_i\},
\qquad
F(x):=\max_{1\leq i\leq m}
\{\varphi_i(x)-\lambda_i\},\\
r_E(x):=\pos{-F(x)}
=\min_{1\leq i\leq m}
\pos{\lambda_i-\varphi_i(x)},
\qquad \forall x\in X.
\end{gathered}
\end{equation}

\end{definition}

The set $E$ is nonempty, since
$\varphi_i(\lambda_i u_i)=\lambda_i$
and hence $\lambda_i u_i\in E$
for every $i$. Moreover,
$E=\{F\geq0\}=\{r_E=0\}$.

The following proposition constructs
an explicit repair along an active direction
and establishes a residual error bound
for the finite union $E$.

\begin{proposition}\label{prop:error-bound}

Let $E$, $F$, $r_E$, and the data
$(\varphi_i,u_i,\lambda_i)_{i=1}^m$
be as in Definition~\ref{def:residual}.
For each $x\in X$, define the nonempty
set of active indices
\[
I(x):=\{i\in\{1,\ldots,m\}:
\varphi_i(x)-\lambda_i=F(x)\}.
\]

For any $x\in X$ and
$i_x\in I(x)$, define the
directional repair
\begin{equation}\label{eq:axial-repair}
z_x:=x+r_E(x)u_{i_x}.
\end{equation}
Then $z_x\in E$ and
\begin{equation}\label{eq:error-bound}
\dist(x,E)\leq\|x-z_x\|=r_E(x),
\qquad \forall x\in X.
\end{equation}

If, in addition, all $\varphi_i$
are Lipschitz, with
$L:=\max_{1\leq i\leq m}\Lip(\varphi_i)$,
then
\begin{equation}\label{eq:two-sided-residual}
r_E(x)\leq L\dist(x,E),
\qquad \forall x\in X.
\end{equation}
\end{proposition}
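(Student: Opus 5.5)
The plan is to exploit superlinearity along the direction $u_{i}$ for the repair, and then use the elementary Lipschitz comparison for the reverse inequality.

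\textbf{The repair lies in $E$.} Fix $x$ and $i=i_x\in I(x)$. If $r_E(x)=0$, then $F(x)\geq0$, so $x\in E$ and $z_x=x$. Otherwise $r_E(x)=-F(x)=\lambda_i-\varphi_i(x)>0$, because $i$ is active. Superadditivity and positive homogeneity give
\[
\varphi_i(z_x)\geq\varphi_i(x)+r_E(x)\varphi_i(u_i)=\varphi_i(x)+\lambda_i-\varphi_i(x)=\lambda_i.
\]
Hence $z_x\in\{\varphi_i\geq\lambda_i\}\subset E$. Since $\|u_i\|=1$, we have $\|x-z_x\|=r_E(x)$, and therefore $\dist(x,E)\leq r_E(x)$. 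This proves \eqref{eq:error-bound}. It is the only place where superlinearity is used, and the normalization $\varphi_i(u_i)=1$ is exactly what makes the step length equal to the residual.

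\textbf{Reverse bound.} For \eqref{eq:two-sided-residual}, I would first show that $r_E$ is $L$-Lipschitz. Each map $x\mapsto\varphi_i(x)-\lambda_i$ is $L$-Lipschitz. A finite maximum of such maps is $L$-Lipschitz, by the same estimate as in the proof of Theorem~\ref{thm:compact-positive}. Composing with $t\mapsto\pos{-t}$, which is $1$-Lipschitz, shows that $r_E$ is $L$-Lipschitz.

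Next, $r_E=0$ on $E$. So for any $e\in E$ we get $r_E(x)=r_E(x)-r_E(e)\leq L\|x-e\|$. Taking the infimum over $e\in E$ gives $r_E(x)\leq L\dist(x,E)$, where $E$ is nonempty as noted after Definition~\ref{def:residual}.

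I do not expect a real obstacle here. The only care point is the case split on the sign of $F(x)$, which ensures that the active index realizes the residual exactly.
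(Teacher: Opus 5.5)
Your proof is correct and follows the paper's argument: the same case split on the sign of $F(x)$, superadditivity along $u_{i_x}$ for the repair, and a Lipschitz comparison against points of $E$ for the reverse bound. The only cosmetic difference is that you show $r_E=\pos{-F}$ is itself $L$-Lipschitz by composing with the $1$-Lipschitz map $t\mapsto\pos{-t}$, whereas the paper works with $F$ directly, using $F\geq0$ on $E$ when $x\notin E$.
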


\begin{proof}
The active set is nonempty because the maximum is finite.
If \(x\in E\), then \(r_E(x)=0\) and \(z_x=x\).
Otherwise \(r_E(x)=\lambda_{i_x}-\varphi_{i_x}(x)>0\).
Superadditivity and normalization give
\(\varphi_{i_x}(z_x)\geq\varphi_{i_x}(x)+
r_E(x)\varphi_{i_x}(u_{i_x})=\lambda_{i_x}\).
Thus \(z_x\in E\); since \(\|u_{i_x}\|=1\), its displacement
has norm exactly \(r_E(x)\).
If \(L<+\infty\), the finite maximum \(F\) is \(L\)-Lipschitz.
For \(x\notin E\) and \(z\in E\),
\(r_E(x)=-F(x)\leq F(z)-F(x)\leq L\|x-z\|\).
Taking the infimum proves \eqref{eq:two-sided-residual};
on \(E\), both sides vanish.
\end{proof}

The directional error bound now yields
exact penalization for Lipschitz objectives,
together with quantitative estimates for
approximate penalized solutions.

\begin{theorem}\label{thm:exact-penalty}

Let $E$ and $r_E$ be as in
Definition~\ref{def:residual}, let
$g:X\to\R$ be $L_g$-Lipschitz,
and let $M>L_g$. Define
\begin{equation}\label{eq:penalized-objective}
J_M=g+Mr_E,\qquad v_E=\inf_E g.
\end{equation}

Then, even if $v_E=-\infty$,
\begin{equation}\label{eq:exact-values}
\inf_X J_M=v_E.
\end{equation}

If $v_E\in\R$, then
\begin{equation}\label{eq:merit-bound}
J_M(x)\geq v_E+(M-L_g)r_E(x),
\qquad \forall x\in X,
\end{equation}
and $\argmin_X J_M=\argmin_E g$.

In this case, for every $\tau\geq0$
and every $x\in X$ satisfying
$J_M(x)\leq v_E+\tau$,
\begin{equation}\label{eq:inexact-lift}
r_E(x)\leq\frac{\tau}{M-L_g},
\qquad g(z_x)\leq v_E+\tau,
\end{equation}

where $z_x$ is any directional
repair defined in \eqref{eq:axial-repair}.
\end{theorem}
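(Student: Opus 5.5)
The plan is to derive everything from the directional repair of Proposition~\ref{prop:error-bound}, i.e.\ from the bound $\dist(x,E)\leq\|x-z_x\|=r_E(x)$ with $z_x\in E$. The key inequality is the Lipschitz comparison
\[
 g(z_x)\leq g(x)+L_g\|x-z_x\|=g(x)+L_g r_E(x).
\]
Together with $v_E\leq g(z_x)$, valid because $z_x\in E$, it gives $g(x)\geq v_E-L_g r_E(x)$ for every $x\in X$. Adding $Mr_E(x)$ to both sides then yields
\[
 J_M(x)\geq v_E+(M-L_g)r_E(x),
\]
which is \eqref{eq:merit-bound} whenever $v_E\in\R$.

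For the value identity \eqref{eq:exact-values}, note that $r_E=0$ on $E$, so $J_M=g$ there and $\inf_X J_M\leq\inf_E g=v_E$. This covers the case $v_E=-\infty$ directly. When $v_E\in\R$, the merit bound together with $M>L_g$ and $r_E\geq0$ gives $J_M\geq v_E$ on $X$, hence equality.

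For the argmin identity, first take $x\in\argmin_X J_M$. Then $J_M(x)=v_E$, and \eqref{eq:merit-bound} forces $(M-L_g)r_E(x)\leq0$. Thus $r_E(x)=0$, so $x\in E$ and $g(x)=J_M(x)=v_E$, i.e.\ $x\in\argmin_E g$. Conversely, if $x\in\argmin_E g$, then $J_M(x)=g(x)=v_E=\inf_X J_M$. This gives both inclusions.

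For the inexact estimates \eqref{eq:inexact-lift}, take $x$ with $J_M(x)\leq v_E+\tau$. Combining this with \eqref{eq:merit-bound} gives $(M-L_g)r_E(x)\leq\tau$, which is the first bound. For the second, use the comparison again:
\[
 g(z_x)\leq g(x)+L_g r_E(x)\leq g(x)+Mr_E(x)=J_M(x)\leq v_E+\tau,
\]
where the middle step uses $M>L_g$ and $r_E\geq0$. The only point needing care is that the repair must lie exactly in $E$ with displacement exactly $r_E(x)$. This is guaranteed by the superlinearity and normalization in Proposition~\ref{prop:error-bound}, so no step seems to be a genuine obstacle; the argument is the standard Clarke-type exact penalty argument.
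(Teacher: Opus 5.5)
Your proposal is correct and follows essentially the same route as the paper. The paper likewise combines the Lipschitz comparison $g(z_x)\leq g(x)+L_g r_E(x)$ for the directional repair of Proposition~\ref{prop:error-bound} with $g(z_x)\geq v_E$ to get the merit bound, and derives the value identity, the equality of minimizer sets, and \eqref{eq:inexact-lift} from it exactly as you do; the only cosmetic difference is that the paper writes $g(z_x)\leq J_M(x)-(M-L_g)r_E(x)$ where you bound $L_g r_E(x)$ by $Mr_E(x)$.
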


\begin{proof}
Fix \(x\in X\) and any repair \(z_x\) from \eqref{eq:axial-repair}.
Proposition~\ref{prop:error-bound} gives \(z_x\in E\) and
\(\|x-z_x\|=r_E(x)\). In particular,
\(g(x)\geq g(z_x)-L_g r_E(x)\).
Assume first \(v_E\in\R\).
Since \(g(z_x)\geq v_E\), adding \(Mr_E(x)\) gives
\[
 J_M(x)\geq g(z_x)+(M-L_g)r_E(x)
          \geq v_E+(M-L_g)r_E(x),
\]
which proves \eqref{eq:merit-bound} for every \(x\).
The first inequality also controls the value at every admissible repair;
no choice of active index is needed beyond membership in \(I(x)\).

For the optimal values, \(r_E=0\) on \(E\), so
\(\inf_XJ_M\leq\inf_EJ_M=\inf_Eg=v_E\).
The merit bound gives the reverse inequality because
\(M-L_g>0\) and \(r_E\geq0\).
If \(v_E=-\infty\), restriction to \(E\) instead yields
\(\inf_XJ_M\leq\inf_EJ_M=-\infty\), proving
\eqref{eq:exact-values} in that case as well.
The value \(+\infty\) cannot occur for \(v_E\), since \(E\) is
nonempty and \(g\) is real-valued.

For the minimizers, suppose \(v_E\in\R\).
If \(x\in\argmin_XJ_M\), the equality of values and the merit
bound give
\(0\geq(M-L_g)r_E(x)\).
Thus \(r_E(x)=0\), \(x\in E\), and \(g(x)=J_M(x)=v_E\).
Conversely, if \(x\in\argmin_Eg\), then \(r_E(x)=0\) and
\(J_M(x)=v_E=\inf_XJ_M\).
This proves equality of the minimizer sets, including the case
when both sets are empty. It does not require prior attainment
of either infimum.

Finally, let \(\tau\geq0\) and \(J_M(x)\leq v_E+\tau\).
Combining this with \eqref{eq:merit-bound} gives
\((M-L_g)r_E(x)\leq\tau\), hence the residual estimate in
\eqref{eq:inexact-lift}.
For any repair, Lipschitz continuity also gives
\(g(z_x)\leq g(x)+L_g r_E(x)
=J_M(x)-(M-L_g)r_E(x)\leq v_E+\tau\).
Its feasibility implies \(g(z_x)\geq v_E\).
Thus an inexact penalized point produces, by every directional repair,
a feasible point with no larger tolerance in objective value.
\end{proof}

We now combine exact penalization
for the finite approximations $D^\eta$
with recovery of feasible points in $D$.
The resulting estimates control the recovery
distance and the associated objective-value gaps.

\begin{theorem}\label{thm:abstract-optimization}

Let $D\subset X$ be a nonempty closed,
co-radiant, and directionally compact set
with $0_X\notin D$, and let $\cF$ be a
convex directionally fine family of 
closed cones in $X$.

Fix $0<\eta<1$ and choose a finite
relative $\eta$-outer approximation
$D^\eta=\bigcup_{i=1}^{m_\eta}
\{\varphi_{i,\eta}\geq\lambda_{i,\eta}\}$
from Theorem~\ref{thm:relative-approx},
with superlinear selected generators
$\varphi_{i,\eta}$. Define its residual by
$r_\eta(x):=\min_{1\leq i\leq m_\eta}
\pos{\lambda_{i,\eta}-\varphi_{i,\eta}(x)}$,
for every $x\in X$.

Let $g:X\to\R$ be $L_g$-Lipschitz
and $M>L_g$, and set
$v:=\inf_D g$, $J_{\eta,M}:=g+Mr_\eta$,
and $v_\eta:=\inf_X J_{\eta,M}$.

Then $v_\eta=\inf_{D^\eta}g\leq v$.

If $x_\eta\in\argmin_X J_{\eta,M}$,
then $v\in\R$ and there exists
$d_\eta\in D$ such that, with
$c_\eta:=\eta/(1-\eta)$,
\begin{alignat}{2}
 &x_\eta\in D^\eta,
 &\qquad&
 \|x_\eta-d_\eta\|
 \leq c_\eta\|x_\eta\|,
 \label{eq:abstract-opt-distance}\\
 &0\leq g(d_\eta)-v
 \leq L_g c_\eta\|x_\eta\|,
 &\qquad&
 0\leq v-v_\eta
 \leq L_g c_\eta\|x_\eta\|.
 \label{eq:abstract-opt-values}
\end{alignat}
\end{theorem}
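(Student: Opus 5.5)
The plan is to apply Theorem~\ref{thm:exact-penalty} to the finite union $E=D^\eta$ and then transfer the resulting information from $D^\eta$ to $D$ using the recovery estimate \eqref{eq:abstract-recovery}.

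\textbf{Step 1: the penalization setting.} The data $(\varphi_{i,\eta},u_{i},\lambda_{i,\eta})$ produced by Theorem~\ref{thm:relative-approx} satisfy the hypotheses of Definition~\ref{def:residual}. Each $\varphi_{i,\eta}$ is superlinear and continuous, and adaptation gives $\varphi_{i,\eta}(u_i)=1$. Hence $r_\eta=r_E$ with $E=D^\eta$.

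\textbf{Step 2: the value inequality.} By \eqref{eq:exact-values}, $v_\eta=\inf_X J_{\eta,M}=\inf_{D^\eta}g$, and this holds even when the value is $-\infty$. Since $D\subset D^\eta$ by \eqref{eq:relative-inclusion}, we obtain $\inf_{D^\eta}g\leq\inf_D g=v$.

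\textbf{Step 3: the case of a minimizer.} Suppose $x_\eta\in\argmin_X J_{\eta,M}$. Then $v_\eta=J_{\eta,M}(x_\eta)$ is a real number, so $v_E\in\R$. Theorem~\ref{thm:exact-penalty} then gives $\argmin_X J_{\eta,M}=\argmin_{D^\eta}g$. Consequently $x_\eta\in D^\eta$ and $g(x_\eta)=v_\eta$.

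\textbf{Step 4: recovery.} Pick an index $i$ with $x_\eta\in G_i$. Estimate \eqref{eq:abstract-recovery} supplies $d_\eta:=ta_i\in D$ with $\|x_\eta-d_\eta\|\leq c_\eta\|x_\eta\|$, which is \eqref{eq:abstract-opt-distance}. From $d_\eta\in D$ we get $v\leq g(d_\eta)<+\infty$. Together with $v\geq v_\eta>-\infty$, this shows $v\in\R$.

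\textbf{Step 5: the value estimates.} The Lipschitz property of $g$ gives
\[
g(d_\eta)\leq g(x_\eta)+L_g\|x_\eta-d_\eta\|\leq v_\eta+L_gc_\eta\|x_\eta\|.
\]
Combining this with $v_\eta\leq v\leq g(d_\eta)$ yields
\[
0\leq g(d_\eta)-v\leq g(d_\eta)-v_\eta\leq L_gc_\eta\|x_\eta\|.
\]
It also yields
\[
0\leq v-v_\eta\leq g(d_\eta)-v_\eta\leq L_gc_\eta\|x_\eta\|.
\]
These are exactly the bounds in \eqref{eq:abstract-opt-values}.

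\textbf{Main obstacle.} The only delicate point is Step 3: showing that a penalized minimizer actually lies in $D^\eta$. This requires knowing $v_E\in\R$ before Theorem~\ref{thm:exact-penalty} can be invoked. The required finiteness comes directly from the attained minimum $J_{\eta,M}(x_\eta)$ together with \eqref{eq:exact-values}. Everything after that step is a direct combination of the recovery estimate \eqref{eq:abstract-recovery} with the Lipschitz property of $g$.
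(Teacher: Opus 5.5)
Your proposal is correct and follows essentially the same route as the paper: apply Theorem~\ref{thm:exact-penalty} to $E=D^\eta$, use finiteness of the attained minimum to get $x_\eta\in D^\eta$ and $g(x_\eta)=v_\eta$, recover $d_\eta$ via \eqref{eq:abstract-recovery}, and bound both value gaps by $g(d_\eta)-g(x_\eta)\leq L_g\|d_\eta-x_\eta\|$. The only cosmetic difference is that you deduce $v<+\infty$ from $d_\eta\in D$, whereas the paper uses $D\neq\varnothing$ directly.
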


\begin{proof}
Theorem~\ref{thm:exact-penalty} gives
\(v_\eta=\inf_{D^\eta}g\leq v\).
If \(x_\eta\) is a minimizer, \(v_\eta=J_{\eta,M}(x_\eta)\) is finite;
since \(D\ne\varnothing\), this implies \(v\in\R\).
Exactness then gives \(x_\eta\in D^\eta\) and
\(g(x_\eta)=v_\eta\).
Choose \(d_\eta\) by \eqref{eq:abstract-recovery}.
Both nonnegative value gaps are bounded by
\(g(d_\eta)-g(x_\eta)\leq L_g\|d_\eta-x_\eta\|\).
The recovery estimate proves the stated bounds.
\end{proof}

In particular, if $\|x_\eta\|\leq T$
for some $T>0$, then, for every
$\varepsilon>0$, the recovered point
$d_\eta$ is $\varepsilon$-optimal for the
original problem whenever
$0<\eta\leq\varepsilon/(L_gT+\varepsilon)$
and $\eta<1$.

The following corollary extends the
recovery and value estimates to approximate
minimizers of the penalized problem.

\begin{corollary}\label{cor:abstract-inexact-optimization}

Under the assumptions and notation of
Theorem~\ref{thm:abstract-optimization},

suppose $v_\eta\in\R$.
Let $\tau\geq0$, $T\geq0$, and $x\in X$
satisfy $J_{\eta,M}(x)\leq v_\eta+\tau$
and $\|x\|\leq T$.
Set $a:=\tau/(M-L_g)$ and
$c_\eta:=\eta/(1-\eta)$.
Then $v\in\R$ and there exists
$d\in D$ such that
\begin{align}
\|x-d\|
&\leq a+c_\eta(T+a),
\label{eq:abstract-inexact-distance}\\
0\leq g(d)-v
&\leq\tau+L_g c_\eta(T+a),
\label{eq:abstract-inexact-gap}\\
0\leq v-v_\eta
&\leq\tau+L_g c_\eta(T+a).
\label{eq:abstract-inexact-value}
\end{align}
\end{corollary}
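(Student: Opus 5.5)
The plan is to chain three earlier results: the inexact-penalization estimates of Theorem~\ref{thm:exact-penalty} applied to $E=D^\eta$, the directional repair of Proposition~\ref{prop:error-bound}, and the recovery estimate \eqref{eq:abstract-recovery} of Theorem~\ref{thm:relative-approx}.

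\emph{Setup.} By Theorem~\ref{thm:abstract-optimization} we have $v_\eta=\inf_{D^\eta}g\le v$. The generators $\varphi_{i,\eta}$ are superlinear and satisfy $\varphi_{i,\eta}(u_i)=1$, so $D^\eta$ and $r_\eta$ fit Definition~\ref{def:residual}. Since $v_\eta\in\R$, Theorem~\ref{thm:exact-penalty} applies with $v_E=v_\eta$. The hypothesis $J_{\eta,M}(x)\le v_\eta+\tau$ then gives, via \eqref{eq:inexact-lift}, the bound $r_\eta(x)\le a$. It also gives a repair $z:=z_x=x+r_\eta(x)u_{i_x}\in D^\eta$ with $g(z)\le v_\eta+\tau$.

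\emph{Recovery.} From the repair we get $\|x-z\|=r_\eta(x)\le a$, and hence $\|z\|\le T+a$. Since $z$ lies in some piece $G_i$, \eqref{eq:abstract-recovery} gives $d:=d_z\in D$ with $\|z-d\|\le c_\eta\|z\|\le c_\eta(T+a)$. The triangle inequality then yields \eqref{eq:abstract-inexact-distance}. Because $d\in D$ and $g$ is real-valued, $v<+\infty$, and $v\ge v_\eta>-\infty$; therefore $v\in\R$.

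\emph{Value bounds.} Since $d\in D$, we have $g(d)\ge v$. On the other side,
\[
g(d)\le g(z)+L_g\|z-d\|\le v_\eta+\tau+L_gc_\eta(T+a)\le v+\tau+L_gc_\eta(T+a),
\]
using $v_\eta\le v$; this proves \eqref{eq:abstract-inexact-gap}. For \eqref{eq:abstract-inexact-value}, the lower bound $v-v_\eta\ge0$ is already known. For the upper bound, the same chain gives $v\le g(d)\le v_\eta+\tau+L_gc_\eta(T+a)$.

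The only delicate point is the bookkeeping. The relative recovery bound must be applied to the repaired point $z$, not to $x$, because $x$ itself need not lie in $D^\eta$; this is why the factor is $T+a$ rather than $T$. We should also check that \eqref{eq:inexact-lift} controls $g(z)$ by $v_\eta+\tau$ rather than by $g(x)$.
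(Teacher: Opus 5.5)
Your proposal is correct and follows the paper's proof essentially step for step. You take the repair $z_x\in D^\eta$ from Theorem~\ref{thm:exact-penalty} with $\|x-z_x\|\leq a$ and $g(z_x)\leq v_\eta+\tau$, bound $\|z_x\|\leq T+a$, and recover $d\in D$ from $z_x$ via \eqref{eq:abstract-recovery}; combining the Lipschitz estimate with $v_\eta\leq v\leq g(d)$ then gives finiteness of $v$ and both value gaps, and your check that the selected superlinear generators fit Definition~\ref{def:residual} makes explicit a detail the paper leaves implicit.
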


\begin{proof}
Theorem~\ref{thm:exact-penalty} supplies a repair \(z_x\in D^\eta\)
with \(\|x-z_x\|\leq a\) and \(g(z_x)\leq v_\eta+\tau\).
The triangle inequality gives \(\|z_x\|\leq T+a\).
Recover \(d\in D\) by \eqref{eq:abstract-recovery}, so
\(\|z_x-d\|\leq c_\eta(T+a)\).
Adding the two displacement bounds proves the distance estimate.
Lipschitz continuity gives
\(g(d)\leq v_\eta+\tau+L_g c_\eta(T+a)\).
Since \(v_\eta\leq v\leq g(d)\) and \(v_\eta\in\R\),
the original value is finite. Subtracting \(v\) or \(v_\eta\)
from this upper bound proves both value estimates.
\end{proof}

We now establish convergence of bounded
approximate penalized minimizers as both the
approximation parameter and the optimization
tolerance tend to zero.

\begin{theorem}\label{thm:convergence}

Under the assumptions and notation of
Theorem~\ref{thm:abstract-optimization},

let $(\eta_n)$ and $(\tau_n)$ satisfy
$0<\eta_n<1$, $\eta_n\to0$, and
$\tau_n\geq0$, $\tau_n\to0$.
For each $n$, choose a superlinear
approximation $D^{\eta_n}$ as in that theorem
and set $v_n:=\inf_X J_{\eta_n,M}$.
Suppose $v_n\in\R$ and $x_n\in X$ satisfy
$J_{\eta_n,M}(x_n)\leq v_n+\tau_n$
for every $n$, with
$\sup_n\|x_n\|<+\infty$.

Then there exist recovered points
$d_n\in D$ such that
\[
\|x_n-d_n\|\to0,\qquad
g(d_n)\to v,\qquad v_n\to v.
\]

The sequences $(x_n)$ and $(d_n)$
are relatively norm compact, and every
cluster point of either sequence minimizes
$g$ over $D$. In particular,
$\argmin_D g\neq\varnothing$.
If the minimizer is unique, both sequences
converge to it.
\end{theorem}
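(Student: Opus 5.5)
The plan is to apply Corollary~\ref{cor:abstract-inexact-optimization} at each index $n$ and then extract limits by compactness.

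\emph{Step 1: recovered points and value convergence.} Set $T:=\sup_n\|x_n\|<+\infty$ and $a_n:=\tau_n/(M-L_g)$. Corollary~\ref{cor:abstract-inexact-optimization}, applied with $\eta_n$, $\tau_n$, $T$ and $x=x_n$, gives $v\in\R$ and points $d_n\in D$ with
\[
\|x_n-d_n\|\leq a_n+c_{\eta_n}(T+a_n),
\qquad
0\leq g(d_n)-v\leq\tau_n+L_gc_{\eta_n}(T+a_n),
\]
and the same bound for $v-v_n$. Since $\eta_n\to0$ gives $c_{\eta_n}\to0$, and $\tau_n\to0$ gives $a_n\to0$, the right-hand sides tend to zero. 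This yields $\|x_n-d_n\|\to0$, $g(d_n)\to v$ and $v_n\to v$.

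\emph{Step 2: relative compactness.} This is the step where directional compactness is used. The sequence $(d_n)$ lies in $D\cap T'B_X$ for $T'=T+\sup_n\|x_n-d_n\|<+\infty$. Because $\Sigma_D$ is compact, Proposition~\ref{prop:profile} shows that $D\cap T'B_X$ is compact. Hence $(d_n)$ is relatively norm compact. Since $\|x_n-d_n\|\to0$, any subsequence of $(x_n)$ has a further subsequence along which $d_{n_k}$ converges, and $x_{n_k}$ converges to the same limit. So $(x_n)$ is relatively compact too, and both sequences have the same cluster points.

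\emph{Step 3: cluster points are minimizers.} Let $\bar d$ be a cluster point, say $d_{n_k}\to\bar d$. Then $\bar d\in D$ because $D$ is closed. Continuity of $g$ and Step 1 give $g(\bar d)=\lim g(d_{n_k})=v$, so $\bar d\in\argmin_D g$. At least one cluster point exists by Step 2, so $\argmin_D g\ne\varnothing$. The same limit $\bar d$ is a cluster point of $(x_n)$.

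\emph{Step 4: uniqueness.} If the minimizer $\bar d$ is unique, then every subsequence of $(d_n)$ has a further subsequence converging to $\bar d$, by Steps 2 and 3. Hence the whole sequence $(d_n)$ converges to $\bar d$. Since $\|x_n-d_n\|\to0$, the sequence $(x_n)$ converges to $\bar d$ as well.

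The main obstacle is Step 2. Without directional compactness, bounded sets in an infinite-dimensional $X$ need not be compact, and that is exactly what Proposition~\ref{prop:profile} supplies here.
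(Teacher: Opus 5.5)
Your proposal is correct and follows essentially the same route as the paper's proof. Both apply Corollary~\ref{cor:abstract-inexact-optimization} with $T=\sup_n\|x_n\|$ to get the three vanishing bounds. Both then use Proposition~\ref{prop:profile} to place the bounded $d_n$ in a compact section of $D$, and finish with closedness of $D$, continuity of $g$, and the subsequence argument for uniqueness.
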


\begin{proof}
Put \(T=\sup_n\|x_n\|\), \(a_n=\tau_n/(M-L_g)\), and
\(c_n=\eta_n/(1-\eta_n)\).
Corollary~\ref{cor:abstract-inexact-optimization} gives \(v\in\R\)
and recovered points \(d_n\in D\).
Since \(a_n,c_n,\tau_n\to0\), its three bounds give all stated limits.
They also imply boundedness of \(d_n\).

By Proposition~\ref{prop:profile}, the sequence \(d_n\) lies
in a compact bounded section of \(D\).
Every subsequence therefore has a convergent further subsequence.
The corresponding \(x_n\) has the same limit because
\(\|x_n-d_n\|\to0\), proving relative norm compactness of both sequences.
Each limit belongs to \(D\) by closedness, and continuity gives
\(g(d)=\lim g(d_n)=v\).
Thus every cluster point is a minimizer and at least one exists.
If it is unique, relative compactness excludes any subsequence
remaining a positive distance from it, proving convergence of
both full sequences.
\end{proof}

\begin{remark}\label{rem:existence}

Suppose, in addition to the preceding
hypotheses, that $g$ is coercive, that is,
$g(x)\to+\infty$ as $\|x\|\to+\infty$.
Since $r_\eta$ is continuous and
nonnegative, each penalized objective
$J_{\eta,M}=g+Mr_\eta$ is continuous
and coercive. Thus, in finite dimensions,
every penalized problem admits a minimizer.

Coercivity also provides the uniform
boundedness required in
Theorem~\ref{thm:convergence}.
Indeed, fix $d\in D$ and let $x_\eta$
satisfy
$J_{\eta,M}(x_\eta)\leq v_\eta+\tau_\eta$.
Since $r_\eta(d)=0$, we have
\[
g(x_\eta)\leq J_{\eta,M}(x_\eta)
\leq v_\eta+\tau_\eta
\leq g(d)+\tau_\eta.
\]
If the tolerances $\tau_\eta$ are bounded,
the points $x_\eta$ lie in a common bounded
sublevel of $g$. This conclusion remains
valid in infinite dimensions, although
coercivity alone does not guarantee
attainment of the penalized minima.
\end{remark}

The sharper recovery bound of
Theorem~\ref{thm:uad-recovery} also improves
the optimization estimates for axial
approximations.

\begin{remark}\label{rem:axial-specialization}

When the approximations $D^\eta$ and
their residuals are constructed using the axial
generators of Theorem~\ref{thm:uad-recovery},
the conclusions of
Theorem~\ref{thm:abstract-optimization}
and Corollary~\ref{cor:abstract-inexact-optimization}
remain valid, under their respective hypotheses,
with $c_\eta:=\eta$ in place of
$\eta/(1-\eta)$.

Indeed, for an exact penalized minimizer
$x_\eta$, choose any axial piece containing
$x_\eta$ and set
$d_\eta:=d_i(x_\eta)$ as in
\eqref{eq:uad-recovery-map}.

For an approximate penalized minimizer
$x$, first obtain $z_x\in D^\eta$ by the
directional repair \eqref{eq:axial-repair},
and then set $d:=d_i(z_x)$ using any
axial piece containing $z_x$.

The convergence conclusions of
Theorem~\ref{thm:convergence} also hold
with these recovered points.
All these refinements follow by replacing
the abstract recovery estimate
\eqref{eq:abstract-recovery} with the
sharper axial estimate
\eqref{eq:uad-recovery-estimate}
in the preceding proofs.
\end{remark}

The following example applies Remark~\ref{rem:axial-specialization}
to a three-ray production model, using axial approximations and their
explicit recovery maps to obtain feasible points and certified value bounds.

\begin{example}
\label{ex:coupled-costs}
In Euclidean \(\R^3\), let
\[
D=\bigcup_{i=1}^3\{te_i:t\geq1\},\qquad
\psi(t)=\sqrt{1+t^2}-1,
\]
and define
\[
g(x)=\psi(x_1+2x_2-4)+2\psi(x_2+2x_3-3)+\psi(2x_1+x_3-5).
\]
The feasible set describes the choice of one of three machines operated
at level \(t\geq1\). Their three outputs are
\(x_1+2x_2\), \(x_2+2x_3\), and \(2x_1+x_3\), with targets
\(4,3,5\). The even function \(\psi\) penalizes both excess and
shortfall; the second target receives double weight.

The matrix of the three linear forms has determinant \(9\).
Since \(\psi''(t)=(1+t^2)^{-3/2}>0\) and
\(\psi(t)\geq|t|-1\), invertibility gives strict convexity and coercivity
of \(g\). Moreover, \(|\psi'|\leq1\) gives \(|\partial_1g|\leq3\),
\(|\partial_2g|\leq4\), and \(|\partial_3g|\leq5\); hence \(g\) is
\(L_g\)-Lipschitz with \(L_g=5\sqrt2\).
On the second ray, the derivative is
\(2\psi'(2t-4)+2\psi'(t-3)\), which vanishes at \(t=7/3\);
strict convexity makes this the unique minimizer.
For the other rays, use
\[
 \sqrt{1+a^2}+\sqrt{1+b^2}
 \geq\sqrt{4+(|a|+|b|)^2}.
\]
For \((a,b)=(t-4,2t-5)\), the identity \(b-2a=3\) gives
\(|a|+|b|\geq3/2\).
For \((a,b)=(2t-3,t-5)\), the identity \(a-2b=7\) gives
\(|a|+|b|\geq7/2\); also \(\sqrt{1+a^2}\geq1\).
Consequently, for \(t\geq1\),
\[
g(te_1)\geq2\sqrt{10}-\frac32,\qquad
g(te_3)\geq\sqrt{17}+\frac{\sqrt{65}}2-3.
\]
Both bounds exceed \(\sqrt{13}+\sqrt{26}-4\). Thus
\[
d_*=(0,7/3,0),\qquad
v=\min_D g=\sqrt{13}+\sqrt{26}-4\simeq4.704571.
\]

For \(0<\eta\leq1/2\), put \(\lambda=(1+\eta)^{-1}\),
\(\delta=\eta/2\), and \(Q_i x=x-x_i e_i\). Define
\[
\varphi_i(x)=x_i-\delta^{-1}\|Q_i x\|_2,\qquad
G_i^\eta=\{\varphi_i\geq\lambda\},\qquad
D^\eta=\bigcup_{i=1}^3G_i^\eta.
\]
Then \(D\subset\intt D^\eta\subset D_\eta\). Taking $M=8>L_g$, the residual and exact penalty are
\[
r_\eta(x)=\min_i\pos{\lambda-\varphi_i(x)},\qquad
J_{\eta,M}(x)=g(x)+8r_\eta(x).
\]
Let \(x_\eta\) minimize \(J_{\eta,M}\), put
\(v_\eta=J_{\eta,M}(x_\eta)=g(x_\eta)\), and choose \(i_\eta\) with
\(x_\eta\in G_{i_\eta}^\eta\). The recovery
\[
d_\eta=\max\{(x_\eta)_{i_\eta},1\}e_{i_\eta}
\]
belongs to \(D\). With \(B_\eta=5\sqrt2\,\|x_\eta-d_\eta\|_2\), one has
\[
0\leq v-v_\eta\leq B_\eta,\qquad
0\leq g(d_\eta)-v\leq B_\eta,\qquad
B_\eta\leq5\sqrt2\,\eta\|x_\eta\|_2.
\]
Coercivity and \(g(x_\eta)\leq v\) give uniform boundedness. Hence
Theorem~\ref{thm:convergence}, with the recovery of
Remark~\ref{rem:axial-specialization}, yields \(x_\eta,d_\eta\to d_*\)
and \(v_\eta,g(d_\eta)\to v\).
Table~\ref{tab:coupled-costs} compares the minima on the three convex
pieces, whose constraints are
\(\|Q_i x\|_2\leq\delta(x_i-\lambda)\).
At the two coarsest accuracies the selected
piece corresponds to the first ray; the original minimizer lies on the second.
\end{example}

\begin{table}[htbp]
\centering
\small
\setlength{\tabcolsep}{4pt}
\caption{Example~\ref{ex:coupled-costs}, rounded to six decimal places.
The bound \(B_\eta\) controls both value gaps.}
\label{tab:coupled-costs}
\begin{tabular}{rrcrrr}
\toprule
\(\eta\) & \(i_\eta\) & \(x_\eta\) & \(v_\eta\) & \(g(d_\eta)\) & \(B_\eta\)\\
\midrule
0.400 & 1 & \((2.803489,0.290933,0.299915)\) & 3.192344 & 5.053722 & 2.954579\\
0.200 & 1 & \((2.778366,0.139160,0.135890)\) & 4.142095 & 5.047816 & 1.375346\\
0.100 & 2 & \((0.030685,2.323951,0.063742)\) & 4.466063 & 4.704724 & 0.500229\\
0.050 & 2 & \((0.014363,2.331122,0.031333)\) & 4.586329 & 4.704579 & 0.243729\\
0.010 & 2 & \((0.002720,2.333317,0.006140)\) & 4.681189 & 4.704571 & 0.047490\\
0.001 & 2 & \((0.000269,2.333342,0.000611)\) & 4.702240 & 4.704571 & 0.004718\\
\bottomrule
\end{tabular}
\end{table}

\section{Relation to Earlier Work and Discussion}\label{sec:comparison}

We compare the present results with
earlier separation, polarity, and approximation
theories, emphasizing the role of directional
fineness and distinguishing the contributions
of the prescribed generating classes from
established geometric principles.

In a real normed space, Zaffaroni
\cite[Theorems~3.1 and~3.4]{Zaffaroni2007} characterizes closed radiant
sets \(R\) by the existence, for each \(y\notin R\), of a continuous
superlinear function \(p\) with \(p(y)>1\geq\sup_Rp\).
The corresponding characterization of closed co-radiant sets \(D\)
uses \(p(y)>-1\geq\sup_Dp\) for \(y\notin D\).
For convex directionally fine families, Theorem~\ref{thm:bilateral}
realizes both statements within \(\cP_{\cF}^{\nc}\), using generators
selected independently of the sets and points. The refinement is the
sufficiency of this prescribed norm-corrected class; continuous
superlinear separation in arbitrary normed spaces is not itself new.

For Banach spaces with an LUR norm, Zaffaroni
\cite[Theorem~4.2]{Zaffaroni2022} shows that both separation statements
hold with normlinear functions
\(x\mapsto a(x)+b\|x\|\), \(a\in X^*\), \(b\leq0\).
Our norm-corrected axial functions \eqref{eq:ncuad} use a norming functional,
the norm, and the additional transverse term \(\|Q_{u,f}x\|\).
They give explicit superlinear separators in every real normed space.
Thus the explicit realization no longer requires an LUR norm.
This is a change of separating class, not an extension of the normlinear
theorem to arbitrary norms within its original class.

The evaluation polarities of \cite[Section~3]{Zaffaroni2007}
already identify closed radiant and closed co-radiant sets as bipolar
fixed points. In the normlinear LUR setting, the discussion preceding
\cite[Proposition~6.1]{Zaffaroni2022} gives the primal bipolar hulls;
that proposition then establishes bijections between the corresponding
closed sets and their polars. Theorem~\ref{thm:bipolar} obtains the
same primal hulls for the present generating classes:
\(\cl(\rad E)\) for nonempty \(E\), and \(\cl(\sdw E)\) when
\(\dist(0_X,E)>0\). In our sign convention these are the positive and
negative bipolars. Corollary~\ref{cor:ncuad-bipolar} realizes both
identities with norm-corrected axial functions in arbitrary normed spaces.
These identities follow from the sufficiency of the selected separators;
the primal hulls and the fixed-point property are not new geometric
characterizations.

In the same LUR setting, \cite[Proposition~6.2]{Zaffaroni2022}
represents the gauge of a closed radiant set as a supremum over its
normlinear polar, and the cogauge of a closed co-radiant set \(D\)
as an infimum over its reverse polar on \(\cl(\cone D)\).
Here Corollary~\ref{cor:radial-levels} identifies the admissible
separating levels through the radial profiles.
Corollary~\ref{cor:strict-gauge-representations} then recovers both
functions using strict separators from \(\cP_{\cF}^{\nc}\):
\(\mu_R=\sup_{p\in\mathcal S_R^{\cF}}p\) and
\(\nu_D=-\sup_{p\in\mathcal S_D^{\cF}}p\), both on all of \(X\).
These representations hold in every real normed space and apply
in particular to the norm-corrected axial family. Their contribution is the
representation by the present strict-separator classes, rather than a
new notion of gauge or cogauge.

In our earlier work on nonlinear separation
of co-radiant sets
\cite{GarciaMelguizoJOGO2025},
we studied separation and scalarization
for approximate and proper approximate
efficiency, using norm-bases and SSP conditions
in different separation results.
The present approximation theory relies instead
on compactness of the direction set $\Sigma_D$.
For closed co-radiant sets avoiding the origin,
the existence of a norm-base amounts to
$\mathcal I_D<+\infty$, whereas directional
compactness does not require this bound.
These conditions are not equivalent in general.
Moreover, the present results concern finite
relative outer approximations and quantitative
recovery, rather than optimality
characterizations for approximate efficiency.

Compact minimal bases also play an important
role in the separation and approximate-efficiency
results of
\cite{SayadiBanderEtAl2017}.
For a closed co-radiant set, the radial
base $\Gamma_D$ is its minimal base under
dilation. By Proposition~\ref{prop:profile},
compactness of $\Gamma_D$ is equivalent to
compactness of $\Sigma_D$ together with
continuity of $\rho_D$ on $\Sigma_D$.
The finite approximation results developed here
require only compactness of $\Sigma_D$,
without assuming continuity or boundedness
of the radial profile.

Directional fineness, adapted generators, and the uniform axial
deviation family were developed in our earlier cone paper
\cite[Definition~2.6, Proposition~2.9, Corollary~5.6]
{GarciaMelguizo2026}. That paper also proves finite outer
approximation and separation for locally compact cones
\cite[Theorems~3.21 and~3.23]{GarciaMelguizo2026}, and obtains
axial penalization and recovery for conically constrained problems
\cite[Theorem~5.8 and Corollary~5.9]{GarciaMelguizo2026}.
These constructions are used here as the geometric starting point.

The present results concern closed radiant and co-radiant sets.
For a co-radiant set, the radial threshold $\rho_D(u)$ varies with
the direction and need be neither continuous nor bounded on the
compact set $\Sigma_D$. Accordingly, the finite approximants are
unions of positive superlevels
$\{\varphi_i\geq\lambda_i\}$, rather than unions of cones.
Their residual controls distance to $D^\eta$; a separate recovery
step controls distance to $D$. Bilateral norm-corrected separation,
radial calibration, the prescribed-class bipolar and gauge
representations, and the relative recovery estimates address this
additional radial geometry. In particular, the factor $\eta$ in
the axial recovery theorem improves the general bound in this
paper; axial recovery with that factor was already available for
cones in \cite[Theorem~5.8]{GarciaMelguizo2026}.

\section{Conclusions}\label{sec:conclusion}

The separation, bipolar, and gauge representations realize known
geometric principles within classes prescribed by a directionally
fine family. When convex localized witnesses are selected, the
corresponding separators are Lipschitz and superlinear. The axial
family supplies such witnesses explicitly in every real normed
space.

It dispenses with the LUR assumption by changing
the separating class, not by extending normlinear
separation within the original class.

The main quantitative contribution is
finite relative outer approximation of closed
co-radiant sets with compact direction set,
together with separation preservation and
directional and bounded-section error estimates.
No continuity or boundedness of the radial
profile is required.

A directional repair gives a residual
error bound for each finite approximation,
yielding exact penalization for Lipschitz
objectives.

Recovery in the original set controls
distance and objective-value errors separately
from the optimization tolerance. As the
approximation parameter and optimization
tolerance vanish, bounded approximate penalized
minimizers are relatively norm compact, and
every cluster point minimizes the original
objective. Convergence of the full sequence
follows when the minimizer is unique.
The explicit axial map improves the relative
distance constant from $\eta/(1-\eta)$ to $\eta$.

The axial family gives explicit local cones,
generators, and recovery maps once norming
functionals are chosen; the parameter estimates
guide their construction.

Compactness guarantees the existence
of a finite global selection, but provides
neither a constructive selection procedure
nor an a priori bound on the number of pieces.
Obtaining such bounds may require quantitative
information on directional coverings, radial
regularity, and the generators; certified
procedures would also require effective access
to the geometric data.
Intrinsic descriptions of functional polars
and extensions beyond global Lipschitz objectives
remain further questions.

\section*{Funding}

This research was supported by projects
PID2021-122126NB-C32, funded by
MICIU/AEI/10.13039/501100011033 and by FEDER,
``A way of making Europe'';
and PID2025-168246NB-I00, funded by
MICIU/AEI/10.13039/501100011033 and by ERDF/EU.


\begin{thebibliography}{99}

\bibitem{GarciaMelguizo2026}
F. García-Castaño and M. Á. Melguizo-Padial,
Directionally Fine Families of Cones:
Construction Principles, Approximation, and Separation,
arXiv preprint arXiv:2609.15337 (2026).


\bibitem{Zaffaroni2004}
A. Zaffaroni,
Is every radiant function the sum of quasiconvex functions?,
Math. Methods Oper. Res. 59 (2004), 221--233.

\bibitem{Zaffaroni2006}
A. Zaffaroni,
Superlinear separation and dual properties of radiant functions,
Pac. J. Optim. 2 (2006), 181--202.

\bibitem{Zaffaroni2007}
A. Zaffaroni,
Superlinear separation for radiant and coradiant sets,
Optimization 56 (2007), 267--285.

\bibitem{Zaffaroni2008}
A. Zaffaroni,
Convex coradiant sets with a continuous concave cogauge,
J. Convex Anal. 15 (2008), 325--343.

\bibitem{Mohebi2013}
H. Mohebi,
Abstract convexity of radiant functions with applications,
J. Global Optim. 55 (2013), 521--538.

\bibitem{Zaffaroni2013}
A. Zaffaroni,
Convex radiant costarshaped sets and the least sublinear gauge,
J. Convex Anal. 20 (2013), 307--328.

\bibitem{HansenEtAl2020}
G. Hansen, I. Herburt, H. Martini, M. Moszy\'nska,
Starshaped sets,
Aequationes Math. 94 (2020), 1001--1092.

\bibitem{GutierrezJimenezNovo2006MMOR}
C. Guti\'errez, B. Jim\'enez, V. Novo,
On approximate efficiency in multiobjective programming,
Math. Methods Oper. Res. 64 (2006), 165--185.

\bibitem{GutierrezJimenezNovo2006SIAM}
C. Guti\'errez, B. Jim\'enez, V. Novo,
A unified approach and optimality conditions for approximate solutions
of vector optimization problems,
SIAM J. Optim. 17 (2006), 688--710.


\bibitem{Qiu2013}
J.-H. Qiu,
Set-valued quasi-metrics and a general Ekeland's variational principle
in vector optimization,
SIAM J. Control Optim. 51 (2013), 1350--1371.

\bibitem{SayadiBanderEtAl2017}
A. Sayadi-Bander, L. Pourkarimi, R. Kasimbeyli, H. Basirzadeh,
Coradiant sets and $\varepsilon$-efficiency in multiobjective optimization,
J. Global Optim. 68 (2017), 587--600.

\bibitem{SayadiBanderKasimbeyliPourkarimi2017}
A. Sayadi-Bander, R. Kasimbeyli, L. Pourkarimi,
A coradiant based scalarization to characterize approximate solutions
of vector optimization problems with variable ordering structures,
Oper. Res. Lett. 45 (2017), 93--97.


\bibitem{PengWeiKasimbeyli2025}
J.-W. Peng, W.-B. Wei, R. Kasimbeyli,
Linear and nonlinear scalarization methods for vector optimization
problems with variable ordering structures,
J. Optim. Theory Appl. 206 (2025), Article 2.


\bibitem{AnsariSharma2022}
Q.H. Ansari, P.K. Sharma,
Some properties of generalized oriented distance function and their
applications to set optimization problems,
J. Optim. Theory Appl. 193 (2022), 247--279.


\bibitem{MartinezLegazRubinovSchaible2005}
J.E. Mart\'inez-Legaz, A.M. Rubinov, S. Schaible,
Increasing quasiconcave co-radiant functions with applications
in mathematical economics,
Math. Methods Oper. Res. 61 (2005), 261--280.

\bibitem{Zaffaroni2022}
A. Zaffaroni,
Separation, convexity and polarity in the space of normlinear functions,
Optimization 71 (2022), 1213--1248.

\bibitem{GRTZ2023}
A. G\"opfert, H. Riahi, C. Tammer, C. Z\u{a}linescu,
Variational Methods in Partially Ordered Spaces, 2nd ed.,
Springer, Cham, 2023.


\bibitem{GarciaMelguizoJOGO2025}
F. García-Castaño, M. Á. Melguizo-Padial,
Nonlinear separation theorems for co-radiant sets
and optimality conditions for approximate and
proper approximate solutions in vector optimization,
J. Global Optim. 93 (2025), 1079--1095.
https://doi.org/10.1007/s10898-025-01561-x.

\end{thebibliography}
\end{document}